\newcommand{\Z}{\mathbb{Z}}
\newcommand{\R}{\mathbb{R}}
\newcommand{\N}{\mathbb{N}}
\newcommand{\Bo}{\mathrm{B}}
\newcommand{\So}{\mathrm{S}}
\newcommand{\Bbo}{\overline{\mathrm{B}}}
\newcommand{\8}{\infty}
\newcommand{\spa}{\mathrm{span}}
\newcommand{\Ker}{\mathrm{Ker~}}
\newcommand{\Lo}{\mathcal{L}}
\newcounter{dummy} \numberwithin{dummy}{section}
\newtheorem{theorem}[dummy]{Theorem}
\newtheorem{lemma}[dummy]{Lemma}
\newtheorem{proposition}[dummy]{Proposition}
\newtheorem{corollary}[dummy]{Corollary}
\newtheorem{question}[dummy]{Question}
\theoremstyle{remark}
\newtheorem{remark}[dummy]{Remark}
\newtheorem{example}[dummy]{Example}
\begin{document}

\title{Disjointly non-singular operators on order continuous Banach lattices complement the unbounded norm topology}
\author{Eugene Bilokopytov\footnote{Email address bilokopy@ualberta.ca, erz888@gmail.com.}}
\maketitle

\begin{abstract}
In this article we investigate the disjointly non-singular (DNS) operators. Following \cite{gmm} we say that an operator $T$ from a Banach lattice $F$ into a Banach space $E$ is DNS, if no restriction of $T$ to a subspace generated by a disjoint sequence is strictly singular. We partially answer a question from \cite{gmm} by showing that this class of operators forms an open subset of $\Lo\left(F,E\right)$ as soon as $F$ is order continuous. Moreover, we show that in this case $T$ is DNS if and only if the norm topology is the minimal topology which is simultaneously stronger than the unbounded norm topology and the topology generated by $T$ as a map (we say that $T$ ``complements'' the unbounded norm topology in $F$). Since the class of DNS operators plays a similar role in the category of Banach lattices as the upper semi-Fredholm operators play in the category of Banach spaces, we investigate and indeed uncover a similar characterization of the latter class of operators, but this time they have to complement the weak topology.\medskip

\emph{Keywords:} Banach lattices, disjointly non-singular operators, upper semi-Fredholm operators;

MSC2020 46B42, 47A53, 47B60.
\end{abstract}

\section{Introduction}

The class of disjointly non-singular (DNS) operators was introduced in \cite{gmm} to single out one of the properties of Tauberian operators on $L_{1}$, and simultaneously as the class of operators that demonstrates an opposite behavior to disjointly strictly singular (DSS) operators. Namely, an operator $T$ from a Banach lattice $F$ into a Banach space $E$ is DNS, if no restriction of $T$ to a subspace generated by a disjoint sequence is strictly singular. It was shown in \cite{gmm} that this condition is equivalent to the fact that $T$ does not send normalized disjoint sequences into null sequences.
In the aforementioned paper a thorough analysis of these operators on $L_{p}$ spaces was performed. The authors have showed that $T\in \Lo\left(L_{p},E\right)$ is DNS if and only if it is ``uniformly DNS'', i.e. there is $\delta>0$ such that no normalized disjoint sequence in $L_{p}$ is mapped into $\delta\Bo_{E}$. It is an easy consequence of the last result that the collection of DNS operators forms an open subset of $\Lo\left(L_{p},E\right)$. A related notion that was introduced was the class of the dispersed subspaces of Banach lattices, which in the case of $L_{p}$ coincide with the strongly embedded subspaces, i.e. subspaces on which the topology of convergence in measure coincides with the norm topology.\medskip

In this paper we abstract some of the results of \cite{gmm} from the $L_{p}$ spaces to the class of order-continuous Banach lattices. We make use of the fact that convergence in measure on $L_{p}$ is a specific case of the unbounded norm (un) topology on Banach lattices, studied e.g. in \cite{kmt}. It turns out that the dispersed subspaces of an order continuous Banach lattice are precisely the subspaces on which the un-topology coincides with the norm topology. Similarly, one of the characterizations of DNS operators on $L_{p}$ spaces in terms of convergence in measure allows a generalization for the case of the order-continuous Banach lattices in terms of the un-topology. Namely, $T$ is DNS if and only if the norm topology is the minimal topology which is simultaneously stronger than the unbounded norm topology and the topology generated by $T$ as a map (we say that $T$ ``complements'' the un-topology in $F$). The ``uniform'' characterization mentioned in the first paragraph is still valid in this more general context, and so the set of DNS operators forms an open subset of $\Lo\left(F,E\right)$, which partially answers one of the questions posed in \cite{gmm}.\medskip

The observations above motivated us to perform a more general study of operators complementary to a given topology in the sense as above, as well as subspaces on which this topology coincides with the norm topology. We dedicate Section \ref{comple} to this general framework. We remark here that the considered construction is somewhat reminiscent of that in \cite{ad}.

Having developed the machinery of the operators that complement a topology inspires an application to the only topology that is present on a Banach space by default -- the weak topology. It turns out that the operators complementary to the weak topology are precisely the upper semi-Fredholm (USF) operators, i.e. the operators which have closed ranges and finitely dimensional kernels. In Section \ref{usfe} we collect several known and new equivalent characterizations of the USF operators, which includes ones remarkably similar to the characterizations of the DNS operators in a more specific context (finitely dimensional subspaces, weak topology and basic sequences correspond to the dispersed subspaces, un-topology and disjoint sequences). Moreover, reflexive spaces are precisely the spaces on which for every USF operator there is $\delta>0$ such that no normalized basic sequence is mapped into the $\delta$-ball (in general, an operator is USF if and only if it does not send normalized basic sequences into null sequences). It is safe to say that the DNS operators play a similar role in the category of Banach lattices as the USF operators play in the category of Banach spaces. In fact, this observation is corroborated by the fact, also established in \cite{gmm}, that on the Banach lattices whose lattice structure is the simplest (i.e. order continuous discrete) the notions of DNS and USF operators coincide.\medskip

In Section \ref{dispe} we focus on the dispersed subspaces, and consider several variations of the concept which coincide if $F$ is order continuous. Section \ref{dnsdss} is mostly devoted to characterizations of DNS operators mentioned above. We remark here that these characterizations mirror the variations of dispersed subspaces and coincide when  $F$ is order continuous. The last Section \ref{lns} briefly discusses a possible generalization of DNS operators -- the LNS operators whose definition differs from DNS by only considering positive disjoint sequences. This class is meant to be a counterpart for the lattice strictly singular (LSS) operators, studied e.g. in \cite{flt}.

\section{Complementary topologies}\label{comple}

In this section $E$ is a normed space and $\tau$ is a linear topology on $E$ which is weaker than the norm topology. Throughout the article $\Bo_{E}$ will stand for the open unit ball of $E$, $\Bbo_{E}$ -- for the closed unit ball, and $S_{E}=\partial\Bo_{E}$ -- for the unit sphere.

Recall that a Hausdorff topological space is called \emph{compactly generated}, or a \emph{k-space} whenever each set which has closed intersections with all compacts is closed itself. It is easy to see that all metrizable and all locally compact Hausdorff spaces are compactly generated. A topological space is \emph{Frechet-Urysohn} if the closure of any set coincides with its sequential closure. This condition is equivalent to the fact that every convergent net contains a sequence convergent to the same limit. Frechet-Urysohn spaces are compactly generated. Additional information about compactly generated spaces see in \cite[3.3]{engelking}, and about Frechet-Urysohn spaces -- in \cite[Chapter 14]{kkl}. We will need the following lemma:

\begin{lemma}\label{fu}
If $F$ is a Frechet-Urysohn topological vector space and $G$ is a semi-normed space, then $F\times G$ is Frechet-Urysohn.
\end{lemma}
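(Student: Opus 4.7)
The plan is to take an arbitrary subset $A \subseteq F \times G$ and a point $(x_0, y_0) \in \overline{A}$ and produce a sequence in $A$ converging to $(x_0, y_0)$. Using the linear structure on both factors I translate so that $(x_0, y_0) = (0, 0)$. Since $G$ is semi-normed, the balls $V_m := \{y \in G : \|y\| < 1/m\}$ for $m \in \N$ form a decreasing countable neighborhood base at $0$ in $G$. Setting $A_m := A \cap (F \times V_m)$, a check with basic open sets $U \times W$ (shrinking $W$ to lie in $V_m$) shows $(0, 0) \in \overline{A_m}$ for every $m$, and since $\pi_F : F \times G \to F$ is continuous, projecting yields $0 \in \overline{P_m}$ in $F$, where $P_m := \pi_F(A_m)$. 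The sets $P_m$ are also decreasing in $m$.

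This reduces the entire proof to producing a diagonal sequence $z_m \in P_m$ with $z_m \to 0$ in $F$. Once such $z_m$ are in hand, picking any $w_m \in V_m$ with $(z_m, w_m) \in A$ gives $(z_m, w_m) \to (0, 0)$ in $F \times G$, since $\|w_m\| < 1/m \to 0$. Thus the whole content of the lemma is the following \emph{strongly Frechet-Urysohn} statement for $F$: whenever $(P_m)$ is a decreasing sequence of subsets of a Frechet-Urysohn TVS with $0 \in \overline{P_m}$ for every $m$, there exist $z_m \in P_m$ with $z_m \to 0$.

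This diagonal extraction is the main obstacle, and it is \emph{not} a formal consequence of Frechet-Urysohn alone — the product of two Frechet-Urysohn spaces need not be Frechet-Urysohn, so the topological group structure of $F$ has to be used. The cleanest route is to cite Nyikos's theorem, which asserts that every Frechet-Urysohn topological group is automatically strongly Frechet-Urysohn, yielding exactly the diagonal property above. A more self-contained argument applies the Frechet-Urysohn property of $F$ at each $0 \in \overline{P_m}$ to obtain sequences $(x_k^{(m)})_k \subseteq P_m$ with $x_k^{(m)} \to 0$, and then selects indices $k_m$ by re-applying Frechet-Urysohn of $F$ to an appropriate ``union-of-tails'' set $\bigcup_m \{x_k^{(m)} : k \geq K_m\}$ (with $K_m$ chosen using translation invariance / the balanced-neighborhood structure of $F$) so that $z_m := x_{k_m}^{(m)} \to 0$. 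I expect the author's argument to follow one of these two routes.
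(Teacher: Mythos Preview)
Your approach is essentially the same as the paper's. The paper works with the equivalent net formulation of Fr\'echet--Urysohn (every convergent net contains a convergent subsequence): starting from a net $(f_i,g_i)\to(0,0)$, it uses the seminorm balls in $G$ to pick, for each $n$, a tail with $\|g_i\|\le 1/n$, applies Fr\'echet--Urysohn of $F$ on that tail to get a sequence $(f_{i_{mn}})_m\to 0$, and then performs exactly the diagonal extraction you isolate. For that last step the paper does not invoke Nyikos but instead cites \cite[Appendix~1]{yam} (a result for topological linear spaces, which Yamamuro calls ``sequential''), so the TVS structure of $F$ is indeed used at precisely the point you flagged.
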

\begin{proof}
Let $\left\{\left(f_{i},g_{i}\right)\right\}_{i\in I}$ be a net such that $f_{i}\xrightarrow[i\in I]{}0_{F}$ and $g_{i}\xrightarrow[i\in I]{}0_{G}$. For every $n\in\N$ there is $i_{n}$ such that $\|g_{i}\|\le\frac{1}{n}$, for every $i\ge i_{n}$, and so there is $\left\{i_{mn}\right\}_{m\in\N}$ such that $i_{mn}\ge i_{n}$, for every $m\in\N$, and $f_{i_{mn}}\xrightarrow[m\to\8]{} 0_{F}$. Therefore, there are increasing sequences $\left\{m_{k}\right\}_{k\in\N}$ and $\left\{n_{k}\right\}_{k\in\N}$ such that $f_{i_{k}}\xrightarrow[k\to\8]{} 0_{F}$, where $i_{k}=i_{m_{k},n_{k}}$, for $k\in\N$ (see \cite[Appendix 1]{yam}; note that Frechet-Urysohn property is called ``sequential'' there). At the same time, since $i_{k}\ge i_{n_{k}}$, it follows that $\|g_{i_{k}}\|\le \frac{1}{n_{k}}$, for every $k\in\N$, and so $g_{i_{k}}\xrightarrow[k\to\8]{} 0_{G}$. Thus, $\left(f_{i_{k}},g_{i_{k}}\right)\xrightarrow[k\to\8]{} \left(0_{F},0_{G}\right)$.
\end{proof}

We say that topologies $\pi_{1}$ and $\pi_{2}$ on $E$ \emph{generate} the topology $\pi$, if $\pi$ is the minimal topology on $E$ which is stronger than both $\pi_{1}$ and $\pi_{2}$ (we denote it $\pi=\pi_{1}\vee\pi_{2}$). It is easy to see that $\pi$ is precisely the topology of the diagonal of $\left(E,\pi_{1}\right)\times \left(E,\pi_{2}\right)$, and in particular a set is compact (a net is convergent) in $\pi$ if and only if it is compact (convergent) in both $\pi_{1}$ and $\pi_{2}$. Since a subset of a Frechet-Urysohn topological space is Frechet-Urysohn, it follows from the preceding lemma that if $\pi_{1}$ is Frechet-Urysohn and $\pi_{2}$ is generated be  a semi-norm, then $\pi$ is Frechet-Urysohn.

\begin{theorem}\label{compl}For a linear topology $\pi$ on $E$, which is weaker than the norm topology, the following conditions are equivalent:
\item[(i)] There is no net in $\So_{E}$, which is null with respect to both $\tau$ and $\pi$;
\item[(ii)] There are open neighborhoods $U\in\tau$ and $V\in\pi$ of $0_{E}$ such that $U\cap V\cap \So_{E}=\varnothing$;
\item[(iii)] $\tau$ and $\pi$ generate the norm topology on $E$ (or on $\Bo_{E}$, or on $\Bbo_{E}$);
\item[(iv)] The topology $\tau\vee\pi$ is compactly generated and $K$ is norm-compact if and only if it is compact with respect to both $\tau$ and $\pi$, for every $K\subset E$ (or $K\subset\Bo_{E}$, or $K\subset\Bbo_{E}$).\medskip

If additionally $\pi$ is generated by a semi-norm $\rho$, then the conditions above are equivalent to
\item[(v)] There is $\delta>0$ such that $0_{E}$ is $\tau$-separated from $\left\{f\in \So_{E},~ \rho\left(f\right)\le\delta\right\}$;
\item[(vi)] There is $\delta>0$ such that the inclusion of the set $\left\{e\in E,~ \rho\left(e\right)\le\delta\|e\|\right\}$ into $E$ is $\tau$-to-norm continuous at $0_{E}$.\medskip

If on top of that $\tau$ is Frechet-Urysohn, then the conditions above are equivalent to
\item[(vii)] There is no sequence in $\So_{E}$, which is null with respect to both $\tau$ and $\pi$;
\item[(viii)] $K$ is norm-compact if and only if it is compact with respect to both $\tau$ and $\pi$, for every $K\subset E$ (or $K\subset\Bo_{E}$, or $K\subset\Bbo_{E}$).
\end{theorem}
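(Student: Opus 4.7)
The plan is to prove $(\mathrm{i})\Leftrightarrow(\mathrm{ii})\Leftrightarrow(\mathrm{iii})\Leftrightarrow(\mathrm{iv})$ under only the standing hypotheses, then to append $(\mathrm{v}),(\mathrm{vi})$ using the semi-norm hypothesis on $\pi$, and finally to append $(\mathrm{vii}),(\mathrm{viii})$ using the Frechet-Urysohn hypothesis on $\tau$ together with Lemma \ref{fu}. The direction $(\mathrm{ii})\Rightarrow(\mathrm{i})$ is immediate, while for $(\mathrm{i})\Rightarrow(\mathrm{ii})$ I direct all pairs $(U,V)$ of $\tau$- and $\pi$-neighborhoods of $0$ by reverse inclusion and pick $f_{U,V}\in U\cap V\cap\So_E$, producing a net in $\So_E$ null in both topologies. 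For $(\mathrm{ii})\Rightarrow(\mathrm{iii})$, replacing $U,V$ by balanced subneighborhoods I may assume $U\cap V$ is a balanced $\tau\vee\pi$-neighborhood of $0$ disjoint from $\So_E$; being balanced and missing the sphere, it lies inside $\Bo_E$. Scaling by $\tfrac1n$ gives $\tfrac1nU\cap\tfrac1nV\subseteq\tfrac1n\Bo_E$, which shows the norm topology is contained in, hence equal to, $\tau\vee\pi$; the versions on $\Bo_E$ and $\Bbo_E$ follow since vector topologies are determined by their neighborhoods of $0$. Finally, $(\mathrm{iii})\Rightarrow(\mathrm{i})$ is clear because $\So_E$ is norm-bounded away from $0$.

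The step $(\mathrm{iii})\Leftrightarrow(\mathrm{iv})$ is what I expect to be the main obstacle, specifically the converse. $(\mathrm{iii})\Rightarrow(\mathrm{iv})$ is immediate since the norm topology is metrizable (hence compactly generated) and the discussion preceding the theorem identifies $\tau\vee\pi$-compactness with simultaneous $\tau$- and $\pi$-compactness. For $(\mathrm{iv})\Rightarrow(\mathrm{iii})$, every $\tau\vee\pi$-compact $K$ is both $\tau$- and $\pi$-compact and hence norm-compact by $(\mathrm{iv})$; on such a $K$ the two Hausdorff topologies $\tau\vee\pi\subseteq\|\cdot\|$ coincide, as a weaker Hausdorff topology on a compact Hausdorff space agrees with it. Thus any norm-closed $F\subseteq E$ intersects every $\tau\vee\pi$-compact $K$ in a relatively $\tau\vee\pi$-closed set, and compact generation of $\tau\vee\pi$ then forces $F$ itself to be $\tau\vee\pi$-closed. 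Under the semi-norm hypothesis, $(\mathrm{v})\Leftrightarrow(\mathrm{ii})$ is immediate from taking $V=\{e\in E:\rho(e)<\delta\}$, and $(\mathrm{v})\Leftrightarrow(\mathrm{vi})$ is a short homogeneity argument inside the cone $\{e\in E:\rho(e)\le\delta\|e\|\}$ exploiting the balancedness of a witness $U$ together with the $1$-homogeneity of $\rho$ and $\|\cdot\|$.

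For the Frechet-Urysohn block, $(\mathrm{i})\Rightarrow(\mathrm{vii})$ and $(\mathrm{iv})\Rightarrow(\mathrm{viii})$ are trivial. For the converses I apply Lemma \ref{fu} to the product $(E,\tau)\times(E,\pi)$: since $\pi$ is semi-normed and $\tau$ is Frechet-Urysohn, this product is Frechet-Urysohn, hence so is its diagonal subspace, which carries precisely $\tau\vee\pi$. Therefore $(E,\tau\vee\pi)$ is Frechet-Urysohn and in particular compactly generated, which promotes $(\mathrm{viii})$ to $(\mathrm{iv})$. For $(\mathrm{vii})\Rightarrow(\mathrm{i})$, a $\tau\vee\pi$-null net in $\So_E$ places $0$ in the $\tau\vee\pi$-closure of $\So_E$; the Frechet-Urysohn property identifies this closure with the sequential closure, producing a sequence in $\So_E$ null in both $\tau$ and $\pi$ and contradicting $(\mathrm{vii})$.
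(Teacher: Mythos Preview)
Your proof is correct and follows essentially the same route as the paper: the same net construction for $(\mathrm{i})\Rightarrow(\mathrm{ii})$, the same balanced-neighborhood argument for $(\mathrm{ii})\Rightarrow(\mathrm{iii})$, the same use of compact generation for $(\mathrm{iii})\Leftrightarrow(\mathrm{iv})$, and the same appeal to Lemma~\ref{fu} (giving $\tau\vee\pi$ Frechet--Urysohn) to handle $(\mathrm{vii})$ and $(\mathrm{viii})$. Your treatment of $(\mathrm{iv})\Rightarrow(\mathrm{iii})$ is slightly more explicit than the paper's one-line appeal to ``determined by compact sets,'' but the content is identical.
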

\begin{proof}
(i)$\Rightarrow$(ii): Assume the contrary. Let $\mathcal{U}$ and $\mathcal{V}$ be bases at $0_{E}$ of $\tau$ and $\pi$ respectively, directed downwards. For every $U\in \mathcal{U}$ and $V\in \mathcal{V}$ there is $e_{UV}\in U\cap V\cap  \So_{E}$. Then, the net $\left\{e_{UV}\right\}_{U\in \mathcal{U},~V\in \mathcal{V}}$ is null simultaneously in $\tau$ and $\pi$.\medskip

(ii)$\Rightarrow$(iii, for $E$): Since all topologies involved are linear, it is enough to show that there are open neighborhoods $U\in\tau$ and $V\in\pi$ of $0_{E}$ such that $U\cap V\subset \Bo_{E}$. Indeed, there are open neighborhoods $U_{0}\in\tau$ and $V_{0}\in\pi$ of $0_{E}$ such that $U_{0}\cap V_{0}\cap \So_{E}=\varnothing$. Since $\tau$ and $\pi$ are linear topologies, there are balanced $U\in\tau$ and $V\in\pi$ such that $U\subset U_{0}$ and $V\subset V_{0}$ (see \cite[Theorem 4.3.6]{bn}). Then, for every $r\ge 1$ we have $0_{E}\in \frac{1}{r}\left(U\cap V\right) \subset U\cap V\subset U_{0}\cap V_{0}\subset E\backslash \So_{E}$. Hence, $U\cap V\cap r\So_{E}=\varnothing$, for every $r\ge 1$, and so $U\cap V\subset B_{E}$.\medskip

(iii, for $\Bo_{E}$)$\Rightarrow$(i): If a net in $\So_{E}\subset 2\Bo_{E}$ is null simultaneously in $\tau$ and $\pi$, then it is also norm-null, and so not contained in $\So_{E}$.\medskip

(iii)$\Leftrightarrow$(iv): From the comment before the theorem, a set is compact in $\tau\vee\pi$ if and only if it is compact in both $\tau$ and $\pi$; moreover, if $\tau\vee\pi$ is the norm topology, it is certainly compactly generated. Conversely, if $\tau\vee\pi$ is compactly generated and has the same compact sets as the norm topology, these topologies coincide, since they are both determined by their compact sets.\medskip

It is clear that (v) is just a reformulation of (ii) for the case of a semi-norm, and since $0_{E}$ is norm-separated from  $\left\{e\in \So_{E},~ \rho\left(e\right)\le\delta\right\}=\So_{E}\cap \left\{e\in E,~ \rho\left(e\right)\le\delta\|e\|\right\}$, we have (vii)$\Rightarrow$(v); (v)$\Rightarrow$(vii) is proven similarly to (ii)$\Rightarrow$(iii).\medskip

(i)$\Rightarrow$(vii) and (iv)$\Rightarrow$(viii) are trivial. The converses follows from the observation before the theorem that if $\tau$ is Frechet-Urysohn and $\pi$ is generated by a semi-norm, then $\tau\vee\pi$ is Frechet-Urysohn, and so compactly generated.
\end{proof}

If $\pi$ is the trivial topology, we get the following.

\begin{corollary}\label{eq}
$\tau$ is equal to the norm topology if and only if $0_{E}$ is $\tau$-separated from $\So_{E}$, and inf and only if $\Bbo_{E}\not\subset\overline{\So_{E}}^{\tau}$.
\end{corollary}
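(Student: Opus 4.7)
The plan is to apply Theorem \ref{compl} with $\pi$ taken to be the indiscrete linear topology $\{\varnothing,E\}$, which is trivially weaker than the norm topology. In this degenerate case condition (ii) reduces to the existence of a $\tau$-neighborhood $U$ of $0_{E}$ with $U\cap\So_{E}=\varnothing$ (take $V=E$ for the $\pi$-component), i.e.\ exactly the statement that $0_{E}$ is $\tau$-separated from $\So_{E}$. Meanwhile $\tau\vee\pi=\tau$, so condition (iii) reads simply ``$\tau$ equals the norm topology''. The equivalence (ii)$\Leftrightarrow$(iii) from the theorem therefore immediately yields the first of the two asserted equivalences.

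For the second equivalence I observe that ``$0_{E}$ is $\tau$-separated from $\So_{E}$'' is merely a rewording of ``$0_{E}\notin\overline{\So_{E}}^{\tau}$''. The implication toward $\Bbo_{E}\not\subset\overline{\So_{E}}^{\tau}$ is then trivial because $0_{E}\in\Bbo_{E}$. The content lies in the reverse direction, which I would attack by contrapositive: assuming $0_{E}\in\overline{\So_{E}}^{\tau}$, I must show that every $e\in\Bbo_{E}$ belongs to $\overline{\So_{E}}^{\tau}$.

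This contrapositive is where essentially all the work sits, though it remains short. I would fix a net $\{e_{\alpha}\}\subset\So_{E}$ with $e_{\alpha}\to 0_{E}$ in $\tau$, take $e\in\Bbo_{E}$ with $\|e\|<1$ (the case $\|e\|=1$ being automatic), and perturb $e$ along $e_{\alpha}$ until it hits $\So_{E}$. Concretely, the map $t\mapsto\|e+te_{\alpha}\|$ is continuous on $[0,\8)$, equals $\|e\|<1$ at $t=0$, and tends to $\8$ as $t\to\8$, so the intermediate value theorem supplies $t_{\alpha}>0$ with $\|e+t_{\alpha}e_{\alpha}\|=1$; the triangle inequalities then pin $t_{\alpha}$ into $[1-\|e\|,1+\|e\|]$, so the $t_{\alpha}$ are uniformly bounded. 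The standard TVS fact that a uniformly bounded family of scalars times a $\tau$-null net is $\tau$-null then gives $t_{\alpha}e_{\alpha}\to 0_{E}$, whence $e+t_{\alpha}e_{\alpha}\to e$ in $\tau$ while remaining in $\So_{E}$. This witnesses $e\in\overline{\So_{E}}^{\tau}$ and completes the contrapositive. The only step requiring any genuine thought is this rescaling, and even there nothing beyond continuity of scalar multiplication is needed.
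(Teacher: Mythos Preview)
Your proposal is correct and follows essentially the same route as the paper: the first equivalence comes from Theorem~\ref{compl} with $\pi$ the trivial topology, and the second is handled by the contrapositive via the same ray-to-sphere construction $e+t_{\alpha}e_{\alpha}$ with a uniform bound on $t_{\alpha}$. The only cosmetic differences are that you invoke the intermediate value theorem and spell out the TVS fact about bounded scalars times a null net, whereas the paper leaves these implicit.
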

\begin{proof}
We only need to show that if $0_{E}$ is not $\tau$-separated from $\So_{E}$, then $\Bbo_{E}\subset\overline{\So_{E}}^{\tau}$. Let $f\in \Bbo_{E}$. Since if $f\in \So_{E}$, then trivially $f\in \overline{\So_{E}}^{\tau}$, we may assume that $\delta=\|f\|<1$. As $0_{E}$ is not $\tau$-separated from $\So_{E}$, there is a $\tau$-null net $\left\{e_{i}\right\}_{i\in I}\subset\So_{E}$. For every $i\in I$ the ray $\left\{f+te_{i}\right\}_{t\ge 0}$ intersects $\So_{E}$ exactly once; let $f_{i}=f+t_{i}e_{i}$ be the intersection. It follows that $t_{i}-\delta =\|t_{i}e_{i}\|-\|f\|\le \|t_{i}e_{i}-f\|=\|f_{i}\|=1$, from where $0\le t_{i}\le 1+\delta$, for every $i\in I$, and so $\left\{f_{i}\right\}_{i\in I}$ is a net in $\So_{E}$ such that $f_{i}=f+t_{i}e_{i}\xrightarrow[\tau]{i\in I}f$.
\end{proof}

We will say that linear topologies $\tau$ and $\pi$ on a normed space $E$ are \emph{complementary} if they satisfy the conditions of the Theorem \ref{compl}. Note that if $\tau$ and $\pi$ are complementary, then the stronger topologies $\tau'$ and $\pi'$  are also complementary. In the case $\pi$ is generated by a (continuous) semi-norm $\rho$ we will say that $\rho$ is $\tau$-complementary.\medskip

For semi-norms $\rho$ and $\lambda$ on $E$, define $d\left(\rho,\lambda\right)=\sup\limits_{e\in\So_{E}}\left|\rho\left(e\right)-\lambda\left(e\right)\right|$, and let $\delta_{\tau}\left(\rho\right)$ be the supremum of $\delta>0$ such that $0_{E}$ is $\tau$-separated from $\left\{e\in \So_{E},~ \rho\left(e\right)\le\delta\right\}$ (if no such $\delta$ exists put $\delta_{\tau}\left(\rho\right)=0$). Clearly, if $\lambda\ge \rho$, then $\delta_{\tau}\left(\lambda\right)\ge \delta_{\tau}\left(\rho\right)$, and $\delta_{\tau}\left(\rho\right)>0$ if and only if $\rho$ is $\tau$-complementary.

\begin{proposition}\label{compl1} Let $\rho$ and $\lambda$ be continuous semi-norms on $E$. Then:
\item[(i)] If $\tau$ is equal to the norm topology, then $\delta_{\tau}\left(\rho\right)=+\8$. Otherwise, $\delta_{\tau}\left(\rho\right)<+\8$.
\item[(ii)] If $\rho$ is $\tau$-complementary, then $\tau$ coincides with the norm topology on $\Ker \rho$, and more broadly, on any set where $\rho$ is $\tau$-continuous.
\item[(iii)] If $\lambda\ge \rho$, and $\rho$ is $\tau$-complementary then so is $\lambda$. Consequently, equivalent semi-norms are (not)  $\tau$-complementary simultaneously.
\item[(iv)] If $\left|\rho-\lambda\right|$ is continuous with respect to $\tau$ on $\Bbo_{E}$ and $\rho$ is $\tau$-complementary, then so is $\lambda$.
\item[(v)] $\left|\delta_{\tau}\left(\rho\right)-\delta_{\tau}\left(\lambda\right)\right|\le d\left(\rho,\lambda\right)$. Consequently, the set of $\tau$-complementary semi-norms is open with respect to $d$.
\end{proposition}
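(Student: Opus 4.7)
The plan is to work directly from the definition of $\delta_{\tau}\left(\rho\right)$ and the sphere-separation characterization (v) in Theorem \ref{compl}. Items (i)--(iii) will follow quickly from monotonicity/scaling properties of $\delta_{\tau}$ plus Corollary \ref{eq}; (iv) will combine a witnessing pair for $\rho$ with the assumed $\tau$-continuity of $\left|\rho-\lambda\right|$; and (v) will give a Lipschitz-type bound from which openness is immediate. The main nontrivial step is (iv), where I must unpack the continuity hypothesis at $0_{E}$ in a way that remains compatible with the separation condition living on $\So_{E}$.

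For (i), if $\tau$ is the norm topology then $0_{E}$ is norm-separated from $\So_{E}$, hence $\tau$-separated from each slice $\left\{e\in\So_{E}:\rho\left(e\right)\le\delta\right\}$, giving $\delta_{\tau}\left(\rho\right)=+\8$; conversely, if $\tau$ is strictly weaker then Corollary \ref{eq} says $\So_{E}$ is not $\tau$-separated from $0_{E}$, and since norm-continuous $\rho$ is bounded on $\So_{E}$ by some $M$, the slice at level $\delta>M$ equals all of $\So_{E}$, forcing $\delta_{\tau}\left(\rho\right)\le M<+\8$. For (ii), the seminorm topology $\pi$ generated by $\rho$ restricts to a topology weaker than $\tau|_{A}$ on any set $A$ on which $\rho$ is $\tau$-continuous, so $\left(\tau\vee\pi\right)|_{A}=\tau|_{A}$; combined with the identity $\tau\vee\pi=\|\cdot\|$ supplied by Theorem \ref{compl}(iii), this yields the claim, with $\Ker\rho$ being the special case $\rho\equiv 0$. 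Item (iii) follows from the already-noted monotonicity of $\delta_{\tau}$ together with the scaling identity $\delta_{\tau}\left(c\rho\right)=c\delta_{\tau}\left(\rho\right)$ for $c>0$.

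For (iv), fix $U\in\tau$ and $\delta>0$ witnessing $\tau$-complementarity of $\rho$, i.e.\ $U\cap\So_{E}\cap\left\{\rho\le\delta\right\}=\varnothing$. Since $\left|\rho-\lambda\right|$ vanishes at $0_{E}$, $\tau$-continuity of $\left|\rho-\lambda\right|$ on $\overline{\Bo}_{E}$ at $0_{E}$ produces a $\tau$-neighborhood $V$ of $0_{E}$ with $\left|\rho-\lambda\right|<\delta/2$ on $V\cap\overline{\Bo}_{E}$, and in particular on $V\cap\So_{E}$. Any $e\in U\cap V\cap\So_{E}$ with $\lambda\left(e\right)\le\delta/2$ would then satisfy $\rho\left(e\right)<\delta$, contradicting the witnessing property of $\left(U,\delta\right)$; hence $\left(U\cap V,\delta/2\right)$ witnesses $\tau$-complementarity of $\lambda$.

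For (v), the pointwise bound $\rho\le\lambda+\left|\rho-\lambda\right|$ on $\So_{E}$ gives the set inclusion
\[\left\{e\in\So_{E}:\lambda\left(e\right)\le\delta-d\left(\rho,\lambda\right)\right\}\subset\left\{e\in\So_{E}:\rho\left(e\right)\le\delta\right\}\]
for every $\delta>0$. Hence $\tau$-separation of $0_{E}$ from the right-hand side transfers to the left, which yields $\delta_{\tau}\left(\lambda\right)\ge\delta-d\left(\rho,\lambda\right)$ for every $\delta<\delta_{\tau}\left(\rho\right)$; letting $\delta\to\delta_{\tau}\left(\rho\right)$ and symmetrizing the roles of $\rho$ and $\lambda$ yields the Lipschitz estimate. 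The openness corollary is then immediate: if $d\left(\rho,\lambda\right)<\delta_{\tau}\left(\rho\right)$ then $\delta_{\tau}\left(\lambda\right)>0$, so $\lambda$ is $\tau$-complementary.
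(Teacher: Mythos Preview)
Your proof is correct and follows essentially the same route as the paper's. The only cosmetic difference is in (iv): the paper argues via the net characterization (a net in $\So_{E}$ that is simultaneously $\tau$- and $\lambda$-null is, by $\tau$-continuity of $|\rho-\lambda|$, also $\rho$-null), whereas you unpack the equivalent open-set characterization from Theorem~\ref{compl}(v); the underlying idea is identical.
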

\begin{proof}
(i): Since $\rho$ is a continuous semi-norm, it is bounded on $\So_{E}$. Hence, $\delta_{\tau}\left(\rho\right)=+\8$ is equivalent to the fact that $0_{E}$ is $\tau$-separated from $\So_{E}$. This in turn is equivalent to $\tau$ being equal to the norm topology, according to Corollary \ref{eq}.

(ii) follows from the fact that $\rho$ generates the trivial topology on $\Ker \rho$, and in general $\rho$ and $\tau$ generate $\tau$ on any set where $\rho$ is $\tau$-continuous. (iii) follows directly from the definition.

(iv): If $\left\{f_{i}\right\}_{i\in I}\subset\So_{E}$ is null with respect to both $\lambda$ and $\tau$, then it is null in $\left|\rho-\lambda\right|$, and so null in $\rho$. Contradiction.

(v): We may assume that $\delta_{\tau}\left(\rho\right)\ge\delta_{\tau}\left(\lambda\right)$. If $\delta_{\tau}\left(\rho\right)\le d\left(\rho,\lambda\right)$ we are done. Otherwise, let $d\left(\rho,\lambda\right)<\delta<\delta_{\tau}\left(\rho\right)$. Then, $0_{E}$ is $\tau$-separated from the set $$\left\{e\in \So_{E},~ \rho\left(e\right)\le\delta\right\}\supset\left\{e\in \So_{E},~ \lambda\left(e\right)\le\delta-d\left(\rho,\lambda\right)\right\},$$ and so $\delta_{\tau}\left(\lambda\right)\ge \delta-d\left(\rho,\lambda\right)$. Since $\delta$ was chosen arbitrarily, the result follows.
\end{proof}

Let us consider a special type of semi-norms on $E$. Namely, for a subspace $F$ of $E$ let $\rho_{F}$ be the distance to $F$. Before characterising subspaces $F$, for which $\rho_{F}$ is complementary to $\tau$ consider the following useful lemma.

\begin{lemma}\label{sph} If $e\in \So_{E}$ and $f\in E\backslash\left\{0_{E}\right\}$, then $\left\|e-\frac{f}{\|f\|}\right\|\le 2\|e-f\|$.
\end{lemma}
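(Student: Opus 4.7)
The plan is a direct triangle inequality estimate, splitting the displacement $e - f/\|f\|$ through the intermediate point $f$.

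First I would write
\[
\left\|e - \tfrac{f}{\|f\|}\right\| \le \|e - f\| + \left\|f - \tfrac{f}{\|f\|}\right\|,
\]
and observe that the second term simplifies as $\left\|f - \tfrac{f}{\|f\|}\right\| = \bigl|1 - \tfrac{1}{\|f\|}\bigr|\cdot \|f\| = \bigl|\|f\| - 1\bigr|$.

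Next, since $\|e\| = 1$, the reverse triangle inequality gives $\bigl|\|f\| - 1\bigr| = \bigl|\|f\| - \|e\|\bigr| \le \|f - e\|$. Substituting back yields the desired bound $\|e - f/\|f\|\| \le 2\|e - f\|$.

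There is no real obstacle here; the lemma is purely a quantitative form of the continuity of the normalization map $f \mapsto f/\|f\|$ at a point of $\So_E$, and the factor $2$ arises from the fact that one unit of error in $\|f\|$ contributes at most one further unit to the distance after normalization.
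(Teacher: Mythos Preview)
Your proof is correct and is essentially identical to the paper's own argument: both split through $f$ via the triangle inequality, compute $\|f - f/\|f\|\| = \bigl|\|f\|-1\bigr|$, and then bound this by $\|e-f\|$ using the reverse triangle inequality with $\|e\|=1$.
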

\begin{proof}
Let $\delta=\|e-f\|$. Then $\left|1- \|f\|\right|\le \delta$. Hence, $\left\|e-\frac{f}{\|f\|}\right\|\le\left\|e-f+f-\frac{f}{\|f\|}\right\|\le \|e-f\|+ \left|1- \|f\|\right|\le 2\delta$.
\end{proof}

Therefore, if $F$ is a subspace of $E$, and $\|e\|=1$, then $d\left(e,\So_{F}\right)\le 2\rho_{F}\left(e\right)$.

\begin{proposition}\label{compl2} For a subspace $F$ of $E$ the following conditions are equivalent:
\item[(i)] $0_{E}$ is $\tau$-separated from $\So_{F}$;
\item[(ii)] $\tau$ coincides with the norm topology on $F$;
\item[(iii)] $\rho_{F}$ is $\tau$-complementary on $E$.
\end{proposition}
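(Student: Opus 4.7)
The plan is to prove (i)$\Leftrightarrow$(ii) from Corollary \ref{eq}, then (iii)$\Rightarrow$(i) from Proposition \ref{compl1}(ii), and finally (i)$\Rightarrow$(iii) via the explicit estimate given by Lemma \ref{sph}.

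For (i)$\Leftrightarrow$(ii), I would simply note that $\tau$-open neighborhoods of $0_E$ restrict to $\tau|_F$-open neighborhoods of $0_F$ (and every $\tau|_F$-open set is the restriction of such a set), so ``$0_E$ is $\tau$-separated from $\So_F$'' is literally the same as ``$0_F$ is $\tau|_F$-separated from $\So_F$''. Apply Corollary \ref{eq} with $E$ replaced by $F$ and $\tau$ by $\tau|_F$.

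For (iii)$\Rightarrow$(ii), I would invoke Proposition \ref{compl1}(ii): the kernel of $\rho_F$ is $\overline{F}\supset F$, so if $\rho_F$ is $\tau$-complementary, then $\tau$ coincides with the norm topology on $\overline{F}$, and a fortiori on $F$.

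The substantive direction is (i)$\Rightarrow$(iii). Assume $\rho_F$ is not $\tau$-complementary; I want to contradict (i) by producing a net in $\So_F$ that is $\tau$-null. By the negation of condition (v) in Theorem \ref{compl} applied to $\rho_F$, for every $\delta>0$ and every $\tau$-neighborhood $V$ of $0_E$ there is $e\in V\cap \So_E$ with $\rho_F(e)\le\delta$. Directing over pairs $(V,\delta)$ in the obvious way, I obtain a net $\{e_\alpha\}\subset\So_E$ that is $\tau$-null and with $\rho_F(e_\alpha)\to 0$. For each $\alpha$ choose $f_\alpha\in F$ with $\|e_\alpha-f_\alpha\|\le \rho_F(e_\alpha)+\varepsilon_\alpha$ with $\varepsilon_\alpha\to 0$; then $\|f_\alpha\|\to 1$, so eventually $f_\alpha\ne 0$ and $g_\alpha:=f_\alpha/\|f_\alpha\|\in \So_F$. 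By Lemma \ref{sph}, $\|e_\alpha-g_\alpha\|\le 2\|e_\alpha-f_\alpha\|\to 0$, so $e_\alpha-g_\alpha\to 0$ in norm, hence in $\tau$. Consequently $g_\alpha=e_\alpha-(e_\alpha-g_\alpha)$ is $\tau$-null, giving a net in $\So_F$ that is $\tau$-null and contradicting (i).

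The main (really, only) obstacle is verifying the ``passage to the sphere'' in the last paragraph: one must guarantee $f_\alpha\ne 0$ to apply Lemma \ref{sph}, which is why I need $\rho_F(e_\alpha)\to 0$ rather than merely small, and why I must choose $\varepsilon_\alpha$ small enough. Everything else is formal.
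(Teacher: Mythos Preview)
Your proof is correct and follows essentially the same route as the paper: the paper proves the cycle (ii)$\Rightarrow$(i) trivially, (iii)$\Rightarrow$(ii) via Proposition~\ref{compl1}, and (i)$\Rightarrow$(iii) by the same contrapositive argument using Lemma~\ref{sph} to push a $\tau$-null, $\rho_F$-null net in $\So_E$ onto $\So_F$. Your additional direct proof of (i)$\Leftrightarrow$(ii) via Corollary~\ref{eq} is a harmless extra, and your handling of the ``passage to the sphere'' is slightly more explicit than the paper's (which simply quotes the remark after Lemma~\ref{sph} to get $f_i\in\So_F$ with $\|e_i-f_i\|\le 3\rho_F(e_i)$), but the substance is identical.
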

\begin{proof}
(ii)$\Rightarrow$(i) follows from the fact that $0_{E}$ is norm-separated from $\So_{F}$, and (iii)$\Rightarrow$(ii) follows from part (i) of Proposition \ref{compl1}.

(i)$\Rightarrow$(iii): Assume that there is a net $\left\{e_{i}\right\}_{i\in I}\subset \So_{E}$, which is null with respect to both $\tau$ and $\rho_{F}$. For every $i\in I$ if $e_{i}\in F$ put $f_{i}=e_{i}$; otherwise from Lemma \ref{sph}, there is $f_{i}\in \So_{F}$ such that $\|e_{i}-f_{i}\|\le 3\rho_{F}\left(e_{i}\right)$. Since $\tau$ is weaker than the norm topology, $\left\{e_{i}-f_{i}\right\}_{i\in I}$ is $\tau$-null, from where $\left\{f_{i}\right\}_{i\in I}$ is a $\tau$-null net in $\So_{F}$. Contradiction.
\end{proof}

There are several ways to introduce a metric on the set of the closed subspaces of a normed linear space, and we will consider some of them. Namely, if $F$ and $G$ are subspaces of $E$, let $d_{1}\left(F,G\right)=\sup\limits_{f\in \So_{F}} \rho_{G}\left(f\right)\vee \sup\limits_{g\in \So_{G}} \rho_{F}\left(g\right)$, $d_{2}\left(F,G\right)=\sup\limits_{f\in \So_{F}} d\left(f,\So_{G}\right)\vee \sup\limits_{g\in \So_{G}} d\left(g,\So_{F}\right)$, and $d_{3}\left(F,G\right)=d\left(\rho_{F}, \rho_{G}\right)$. Note that $d_{2}\left(F,G\right)$ is equal to the Hausdorff distance between $\So_{F}$ and $\So_{G}$, or equivalently, between $\Bbo_{F}$ and $\Bbo_{G}$. While $d_{2}$ and $d_{3}$ are obviously metrics, this is not true for $d_{1}$, but it follows from Lemma \ref{sph} that $d_{1}\le d_{2}\le 2d_{1}$. Also, since $\rho_{F}$ and $\rho_{G}$ vanish on $F$ and $G$ respectively, we have $d_{1}\le d_{3}$. Finally, $d_{3}\le 2d_{1}$ can be deduced from \cite[Chapter IV, Lemma 2.2]{kato}. Hence, all introduced distances generate the same topology on the collection of the closed subspaces of a normed space $E$ (we will call it the \emph{gap topology}). For further  details see \cite[Chapter IV]{kato} and \cite{ostrov}. Applying part (v) of Proposition \ref{compl2} to the distance $d_{3}$ we get the following result.

\begin{corollary}\label{open}The set of the closed subspaces of $E$, on which  $\tau$ coincides with the norm topology is gap-open.
\end{corollary}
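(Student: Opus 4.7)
The plan is to obtain the result by composing two pieces already established in the section: Proposition \ref{compl2} translates the property ``$\tau$ agrees with the norm topology on $F$'' into a property of the single semi-norm $\rho_{F}$, namely that $\rho_{F}$ is $\tau$-complementary on $E$; and part (v) of Proposition \ref{compl1} asserts that the collection of $\tau$-complementary semi-norms on $E$ is open with respect to the metric $d$. So the strategy is to pull the openness statement from the space of semi-norms back to the space of closed subspaces via the assignment $F\mapsto \rho_{F}$.

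Concretely, first I would verify that $\rho_{F}$ qualifies as an input to Proposition \ref{compl1}(v): it is a genuine continuous semi-norm on $E$, since it is $1$-Lipschitz (and in particular norm-continuous). Then I would invoke Proposition \ref{compl2} to identify the set of closed subspaces $F$ on which $\tau$ coincides with the norm topology as $\{F:\rho_{F}\text{ is }\tau\text{-complementary}\}$.

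Next, by Proposition \ref{compl1}(v), if $\rho_{F}$ is $\tau$-complementary then any continuous semi-norm $\lambda$ with $d(\rho_{F},\lambda)<\delta_{\tau}(\rho_{F})$ is also $\tau$-complementary. Applying this with $\lambda=\rho_{G}$, and using the fact that by definition $d_{3}(F,G)=d(\rho_{F},\rho_{G})$, we conclude that every closed subspace $G$ with $d_{3}(F,G)<\delta_{\tau}(\rho_{F})$ carries the norm topology under $\tau$. Finally, since $d_{3}$ generates the gap topology (as recorded in the paragraph preceding the corollary, via the inequalities $d_{1}\le d_{2}\le 2d_{1}$, $d_{1}\le d_{3}\le 2d_{1}$), this $d_{3}$-openness is exactly gap-openness, which is the conclusion.

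There is essentially no obstacle here beyond bookkeeping; the one point that deserves a brief sentence is the equivalence between $d_{3}$-openness and gap-openness, but this is immediate from the comparability of $d_{1},d_{2},d_{3}$ already stated, so the proof reduces to citing Propositions \ref{compl2} and \ref{compl1}(v) in sequence.
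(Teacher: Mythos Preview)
Your proposal is correct and follows exactly the paper's approach: the paper derives the corollary in one line by applying part (v) of Proposition~\ref{compl1} (the text contains a typo, writing ``\ref{compl2}'' instead of ``\ref{compl1}'') to the distance $d_{3}$, which is precisely the composition of Proposition~\ref{compl2} with Proposition~\ref{compl1}(v) that you spell out.
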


Let us now consider another class of semi-norms on  $E$. Namely, if $T:E\to F$ is a linear operator into another normed space $F$, define a seminorm $\rho_{T}$ on $E$ by $\rho_{T}\left(e\right)=\|Te\|$. Note that $\rho_{T}$ can tell us a lot about $T$. First, the topology generated by $T$ as a map into a topological space $F$ is the same as the one generated by $\rho_{T}$, and in particular $T$ is continuous whenever $\rho_{T}$ is. Furthermore, the quotient map from $E$ onto $E\slash\Ker \rho_{T}$ (with the norm induced by $\rho_{T}$) is unitarily equivalent to $T$, and so $\rho_{T}$ completely characterizes geometrical behavior of $T$. In particular, if $F$ is a closed subspace of $E$, then $\rho_{F}=\rho_{Q_{F}}$, where $Q_{F}$ is the quotient map from $E$ onto $E\slash F$. In fact the class of $\rho_{T}$ exhausts the supply of semi-norms on $E$: if $\rho$ is a seminorm, then $\rho=\rho_{Q_{\Ker \rho}}$, where $E\slash \Ker \rho$ is endowed with the norm induced by $\rho$.

It is interesting to characterize more classes of operators in terms of the semi-norms induced by them. For example, a continuous operator $T$ is bounded from below if and only if $\rho_{T}$ is equivalent to $\|\cdot\|$, and it follows from \cite[Chapter 2, Theorem 12]{groth} that compactness of $T$ is equivalent to continuity of $\rho_{T}$ on $B_{E}$ with respect to the weak topology. Let us characterize the class of operators that are essentially determined by their kernels.

\begin{proposition}\label{closed}An operator $T:E\to F$ between Banach spaces $E$ and $F$ is continuous and has a closed range if and only if $\rho_{T}$ is equivalent to $\rho_{\Ker T}$.
\end{proposition}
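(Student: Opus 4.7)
The natural approach is to route everything through the canonical factorization $T = \tilde T \circ Q_{\Ker T}$, where $Q_{\Ker T}\colon E\to E/\Ker T$ is the quotient map and $\tilde T\colon E/\Ker T \to F$ is the induced linear map satisfying $\|\tilde T(\bar e)\|_F = \|Te\|_F = \rho_T(e)$. The whole point is that on the quotient side the norm is exactly $\|\bar e\|_{E/\Ker T} = \rho_{\Ker T}(e)$ (provided $\Ker T$ is closed), so the equivalence $\rho_T\asymp\rho_{\Ker T}$ translates verbatim into $\tilde T$ being an isomorphism onto its range.

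For the forward direction, I would begin by observing that if $T$ is continuous then $\Ker T$ is closed, and the induced map $\tilde T$ is a bounded linear bijection from the Banach space $E/\Ker T$ onto the range $T(E)$. Since $T(E)$ is closed in $F$, it is itself a Banach space, so the open mapping theorem yields a bicontinuous $\tilde T$, i.e.\ constants $c,C>0$ with $c\|\bar e\|\le\|\tilde T(\bar e)\|\le C\|\bar e\|$. Translating back to $E$ gives $c\,\rho_{\Ker T}(e)\le\rho_T(e)\le C\,\rho_{\Ker T}(e)$, which is the desired equivalence.

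For the reverse direction, assume $c\,\rho_{\Ker T}\le\rho_T\le C\,\rho_{\Ker T}$. Since $\rho_{\Ker T}$ is $1$-Lipschitz for $\|\cdot\|$ (and in particular $\rho_{\Ker T}\le\|\cdot\|$), the upper bound gives $\|Te\|\le C\|e\|$, so $T$ is continuous. The lower bound forces $\rho_T$ and $\rho_{\Ker T}$ to share the same zero set, but the zero set of $\rho_{\Ker T}$ is $\overline{\Ker T}$ and the zero set of $\rho_T$ is $\Ker T$, so $\Ker T$ is closed and $\tilde T$ is well defined on the Banach space $E/\Ker T$. The lower bound now reads $\|\tilde T(\bar e)\|\ge c\|\bar e\|$, so $\tilde T$ is bounded below on a complete space and therefore has closed range; that range is precisely $T(E)$.

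There is no real obstacle here — the statement is essentially a restatement of the open mapping/bounded-below dictionary at the level of the semi-norm $\rho_T$. The only point that needs a moment of care is noticing that in the reverse direction the hypothesis itself forces $\Ker T$ to be closed (via comparison of zero sets of $\rho_T$ and $\rho_{\Ker T}$), so that passing to the quotient $E/\Ker T$ is legitimate before one invokes the closed-range argument.
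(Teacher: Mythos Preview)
Your proof is correct and follows essentially the same route as the paper: both factor $T$ through the quotient $E/\Ker T$ (the paper writes $T=SQ_H$ with your $\tilde T$ playing the role of $S$) and then observe that equivalence of $\rho_T$ and $\rho_{\Ker T}$ is exactly the statement that the induced map is bounded below, invoking the open mapping theorem in the forward direction. The only cosmetic difference is that in the reverse direction the paper deduces closedness of $\Ker T$ directly from continuity of $T$, whereas you take the (also valid, slightly more roundabout) route via comparison of zero sets.
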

\begin{proof}
Let $H=\Ker T$.

Necessity: Recall that $T$ can be factored through the quotient $Q_{H}$, as $T=SQ_{H}$, where $S$ is a continuous injection. Then the range of $S$ is the same as of $T$, and so $S$ is a bijection onto a closed subspace of a Banach space. Hence, $S$ is bounded from below, and so $\rho_{T}=\rho_{SQ_{H}}\sim \rho_{Q_{H}}=\rho_{H}$.

Sufficiency: Since $\rho_{H}\le \|\cdot\|$ it follows that $\rho_{T}$ is continuous, from where $T$ is continuous, and $H$ is closed. Moreover, we still have $T=SQ_{H}$ with $\rho_{SQ_{H}}=\rho_{T}\sim \rho_{H}=\rho_{Q_{H}}$, from where $S$ is an isomorphism from a Banach space $E\slash H$ onto $TE$. Thus, $TE$ is closed.
\end{proof}

As a consequence, a restriction of an operator with a closed range to a closed subspace has a closed range.\medskip

We will say that an operator $T$ from $E$ into a normed space $F$ is $\tau$-\emph{complementary} if $\rho_{T}$ is  $\tau$-complementary. Note, that every $\tau$-complementary linear operator is continuous, and $\tau$ coincides with the norm topology on every set where $T$ is $\tau$-continuous, including $\Ker T$. The set of all $\tau$-complementary operators will be denoted by $U_{\tau}\left(E,F\right)$.

\begin{proposition}\label{complo} Let $F$ be a normed space and let $T\in \Lo\left(E,F\right)$. Then:
\item[(i)] If $S\in \Lo\left(E,F\right)$, then $d\left(\rho_{T}, \rho_{S}\right)\le \|T-S\|$. In particular, if $T\in U_{\tau}\left(E,F\right)$ and $\|T-S\|<\delta_{\tau}\left(\rho_{T}\right)$, then $S\in U_{\tau}\left(E,F\right)$. Consequently, $U_{\tau}\left(E,F\right)$ is open in $\Lo\left(E,F\right)$.
\item[(ii)] If $G$ is a normed space, and $S\in \Lo\left(F,G\right)$ is such that $ST\in U_{\tau}\left(E,G\right)$, then $T\in U_{\tau}\left(E,F\right)$.
\item[(iii)] If $G$ is a normed space, and $S\in \Lo\left(F,G\right)$ is bounded from below, then $ST$ and $T$ are (not) $\tau$-complementary simultaneously. Same for $T$ and $TR$, for a bounded from below $R\in \Lo\left(E\right)$.
\item[(iv)] If $E$ and $F$ are Banach spaces, and $T$ has a closed range, then $T\in U_{\tau}\left(E,F\right)$ if and only if $\tau$ coincides with the norm topology on $\Ker T$.
\end{proposition}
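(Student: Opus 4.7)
The plan is to reduce each assertion to a statement about the seminorm $\rho_T$ and then invoke the seminorm-level results in Propositions \ref{compl1}, \ref{compl2}, and \ref{closed}.

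For \textbf{(i)}, the reverse triangle inequality gives $|\rho_T(e) - \rho_S(e)| = |\|Te\| - \|Se\|| \le \|(T-S)e\| \le \|T-S\|$ for all $e \in \So_E$, hence $d(\rho_T, \rho_S) \le \|T-S\|$. Part (v) of Proposition \ref{compl1} then yields $|\delta_\tau(\rho_T) - \delta_\tau(\rho_S)| \le \|T-S\|$, from which the explicit stability neighbourhood $\{S : \|T-S\| < \delta_\tau(\rho_T)\}$ and the openness of $U_\tau(E, F)$ in $\Lo(E, F)$ follow at once.

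For \textbf{(ii)}, submultiplicativity of the operator norm gives $\rho_{ST} \le \|S\|\rho_T$; assuming the non-trivial case $S \ne 0$ (if $S = 0$ and $ST \in U_\tau(E,G)$, then Corollary \ref{eq} forces $\tau$ to be the norm topology, so every operator is $\tau$-complementary), part (iii) of Proposition \ref{compl1} transfers $\tau$-complementarity from $\rho_{ST}$ up to the dominating seminorm $\|S\|\rho_T$, which is equivalent to $\rho_T$. For the first half of \textbf{(iii)}, adding the matching lower bound $\rho_{ST} \ge c\rho_T$ coming from $S$ being bounded below makes $\rho_T$ and $\rho_{ST}$ outright equivalent, and Proposition \ref{compl1}(iii) again closes the case. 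The parallel assertion for $T$ and $TR$ begins from the identity $\rho_{TR}(e) = \rho_T(Re)$; bad nets are transported in either direction via the substitutions $f_i = Re_i/\|Re_i\|$ and $f_i = R^{-1}e_i/\|R^{-1}e_i\|$, with the norms $\|Re_i\|$ and $\|R^{-1}e_i\|$ trapped in a positive compact interval so the $\tau$-null and $\rho$-null properties transfer simultaneously.

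For \textbf{(iv)}, Proposition \ref{closed} gives $\rho_T \sim \rho_{\Ker T}$ whenever $T$ has closed range, so $T \in U_\tau(E, F)$ is equivalent via Proposition \ref{compl1}(iii) to $\rho_{\Ker T}$ being $\tau$-complementary, which by Proposition \ref{compl2} is the coincidence of $\tau$ with the norm topology on $\Ker T$.

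The main subtlety lies in the $TR$ part of (iii): the substitutions above require $R$ to be $\tau$-continuous (and, for the $R^{-1}$ substitution, surjective as well, so that the inverse is defined on all of $E$). In the intended applications to the weak and unbounded norm topologies these compatibilities are automatic, but in the abstract setting of Section \ref{comple} one must either read ``bounded from below'' as ``boundedly invertible and $\tau$-continuous'' or add a hypothesis to that effect. Everything else is routine bookkeeping with the seminorm machinery already developed.
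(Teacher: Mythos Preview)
Your treatment of (i), (ii), (iv), and the $ST$ half of (iii) is essentially identical to the paper's: reduce everything to the seminorms $\rho_T,\rho_{ST}$ and invoke Proposition~\ref{compl1} (parts (iii) and (v)) together with Propositions~\ref{compl2} and~\ref{closed}. Your explicit handling of the degenerate case $S=0$ in (ii) is more careful than the paper's one-line appeal to $\rho_{ST}\le\|S\|\rho_T$, but the substance is the same.

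Where you diverge is the $TR$ half of (iii), and here you have put your finger on a genuine defect that the paper glosses over with ``proven similarly''. The seminorm argument that works for $ST$ simply does not transfer: $\rho_{TR}(e)=\rho_T(Re)$ is not comparable to $\rho_T(e)$, so Proposition~\ref{compl1}(iii) is unavailable. Your proposed net-transport argument is the natural substitute, and your diagnosis of what it needs ($\tau$-continuity of $R$ for one direction, surjectivity for the other) is correct. In fact the statement as written is \emph{false}, not merely unproved: take $E=\ell^2$, $\tau$ the weak topology, $T(x_1,x_2,\dots)=(x_2,x_4,\dots)$ and $R(x_1,x_2,\dots)=(0,x_1,0,x_2,\dots)$. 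Then $R$ is an isometry (hence bounded from below), $TR=\mathrm{Id}_E$ is trivially $\tau$-complementary, yet $\Ker T$ is infinite-dimensional so by Theorem~\ref{usf} $T$ is not $\tau$-complementary. Thus ``simultaneously'' fails even for the best-behaved choice of $\tau$. Your suggested repair---read ``bounded from below'' as ``boundedly invertible and $\tau$-continuous''---is exactly what is needed; for the weak topology (and in the later applications to Corollary~\ref{cdns}) one would want $R$ to be an isomorphism of $E$.
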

\begin{proof}
(i): the first claim follows from the triangle inequality; the second claim follows from part (iv) of Proposition \ref{compl1}.

(ii): Since $\rho_{ST}\le \|S\|\rho_{T}$, the claim follows from part (iii) of Proposition \ref{compl1}. (iii) is proven similarly.

(iv) follows from combining Proposition \ref{compl2} with Proposition \ref{closed}.
\end{proof}

\begin{remark}\label{complo2}Since $\|\left.T\right|_{H}-\left.S\right|_{H}\|\le \|T-S\|$, for any $T,S\in \Lo\left(E,F\right)$ and a subspace $H$ of $E$, it follows that $U_{\tau}\left(H,F\right)\cap \Lo\left(E,F\right)$ is open. Also, if $\pi$ is the topology generated by $\tau$ and $\rho_{T}$, then applying Corollary \ref{open} to $\pi$ we conclude that the collection of subspaces of $E$ on which $T$ is $\tau$-complementary is gap-open. \qed\end{remark}

\section{Operators that complement the weak topology}\label{usfe}

In an abstract Banach space the only natural linear topology distinct from the norm topology is the weak topology. Before considering the class of the weakly complementary operators let us recall some facts about basic sequences in Banach spaces (more details see in \cite[Section 1.3]{ak} and \cite[Section 2]{gmm}).

\begin{theorem}\label{basic}
Let $E$ be a Banach space and let $\left\{e_{n}\right\}_{n\in\N}\subset\So_{E}$ be a basic sequence, with the corresponding biorthogonal sequence $\left\{\nu_{n}\right\}_{n\in\N}\subset r\Bo_{E}$, for some $r\ge 1$. Then:\medskip
\item[(i)] If $F$ is a Banach space, and $\left\{f_{n}\right\}_{n\in\N}\subset F$ is such that $\sum\limits_{n\in\N}\|f_{n}\|=a<+\8$, then the operator $S:E\to F$ defined by $Se=\sum\limits_{n\in\N}\nu_{n}\left(e\right)f_{n}$ is compact with $\|S\|\le ar$. Moreover, $\left\|\left.S\right|_{E_{m}}\right\|\xrightarrow[m\to\8]{}0$, where $E_{m}=\overline{\spa}\left\{e_{n}\right\}_{n\ge m}$.\medskip
\item[(ii)] If $\left\{g_{n}\right\}_{n\in\N}$ is such that $\sum\limits_{n\in\N}\|g_{n}-e_{n}\|=a<\frac{1}{r}$, then the operator $T:E\to E$ defined by $Te=e+\sum\limits_{n\in\N}\nu_{n}\left(e\right)\left(g_{n}-e_{n}\right)$ is invertible with $\|T\|,\|T^{-1}\|\le \frac{1}{1-a}$, and $Te_{n}=g_{n}$, for every $n\in\N$, and in particular $\left\{g_{n}\right\}_{n\in\N}$ is basic.\medskip
\item[(iii)] If  $\left\{h_{i}\right\}_{i\in I}\subset \overline{\spa}\left\{g_{n}\right\}_{n\in\N}$ is normalized and weakly null, then there is a basic sequence $\left\{h_{i_{k}}\right\}_{k\in\N}$ and an increasing sequence of indices $\left\{m_{k}\right\}_{k\in\N}$ such that\linebreak $d\left(h_{i_{k}},\spa\left\{e_{n}\right\}_{n=m_{k}+1}^{m_{k+1}}\right)\xrightarrow[k\to\8]{}0$. If additionally $\left\{h_{i}\right\}_{i\in I}$ is null with respect to a semi-norm $\rho$, then $\left\{h_{i_{k}}\right\}_{k\in\N}$ can also be selected $\rho$-null.\medskip
\item[(iv)] If $H$ is a closed subspace of $E$ such that $\dim E_{1} \cap H<\8$, then there is $m\in\N$ such that $E_{m}\cap H=\left\{0_{E}\right\}$. Moreover, if $E_{1}+ H$ is closed, the same is true for $E_{m}+ H$.
\end{theorem}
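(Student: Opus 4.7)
For part (i), setting $S_N e = \sum_{n=1}^{N} \nu_n(e) f_n$, the bound $|\nu_n(e)| \le r\|e\|$ combined with $\sum \|f_n\| = a$ gives absolute convergence of the defining series of $S$, the norm estimate $\|S\| \le ar$, and compactness as a norm-limit of finite-rank maps. The tail estimate follows because on $E_m$, the continuity of $\nu_n$ together with the biorthogonal relations $\nu_n(e_k) = \delta_{nk}$ force $\nu_n(e) = 0$ whenever $n < m$; hence $\|\left.S\right|_{E_m}\| \le r \sum_{n \ge m} \|f_n\| \to 0$.

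Part (ii) is a direct consequence: apply (i) with $f_n = g_n - e_n$ to produce a compact operator $K$ with $\|K\| < 1$ (since $ar < 1$), so that $T = I + K$ is invertible by Neumann series with the claimed bounds, and $T e_n = e_n + (g_n - e_n) = g_n$ by biorthogonality. For part (iii), the natural route is a Bessaga--Pelczynski selection argument. By (ii), $\{g_n\}$ is itself basic with biorthogonal functionals $\{\mu_n\}$ on $\overline{\spa}\{g_n\}$, and weak nullity forces $\mu_n(h_i) \to 0$ for every fixed $n$. One then constructs $\{i_k\}$ and $\{m_k\}$ by a gliding-hump induction: at stage $k$, pick $i_k$ so that the early coefficients $\mu_n(h_{i_k})$ for $n \le m_k$ are small (using weak nullity) and, in the refined version, also $\rho(h_{i_k}) < 1/k$ (using $\rho$-nullity); then choose $m_{k+1} > m_k$ large enough that the tail $\sum_{n > m_{k+1}} \mu_n(h_{i_k}) g_n$ is small (using convergence of the basis expansion of $h_{i_k}$). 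The resulting $h_{i_k}$ is norm-close to a block sequence of $\{g_n\}$, hence --- since $\|g_n - e_n\| \to 0$ --- also close to a block sequence of $\{e_n\}$, and basicity of $\{h_{i_k}\}$ follows from (ii) applied once more. I expect this to be the main technical obstacle: the induction must simultaneously control early coefficients, late coefficients, and the $\rho$-value, requiring careful interleaving.

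For part (iv), the decreasing chain $\{E_m \cap H\}_{m \in \N}$ sits inside the finite-dimensional $E_1 \cap H$ and therefore stabilizes; its stable value lies in $\bigcap_m E_m$, which equals $\{0\}$ since any vector in every tail has vanishing biorthogonal coefficients and is therefore zero. Hence $E_m \cap H = \{0\}$ for some $m$. For the ``moreover'' clause, consider $\pi : E_1 + H \to E_1 / (E_1 \cap H)$ sending $e + h$ to $[e]$: this is well-defined (alternative decompositions differ by an element of $E_1 \cap H$) with $\ker \pi = H$, and continuous since the open mapping theorem applied to the bounded surjection $E_1 \oplus_1 H \twoheadrightarrow E_1 + H$ (closed by hypothesis) yields bounded decompositions $v = e + h$. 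The image $\pi(E_m) = (E_m + (E_1 \cap H))/(E_1 \cap H)$ is closed, the numerator being the sum of a closed and a finite-dimensional subspace of $E_1$; a routine check gives $\pi^{-1}(\pi(E_m)) = E_m + H$, which is therefore closed.
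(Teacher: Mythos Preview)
The paper does not actually prove this theorem: it is presented as a collection of recalled facts about basic sequences, with the reader referred to \cite[Section 1.3]{ak} and \cite[Section 2]{gmm} for details. Your outline supplies a correct self-contained argument along the standard lines (finite-rank approximation for (i), Neumann series for (ii), Bessaga--Pe\l czy\'nski gliding hump for (iii), and a stabilization-plus-quotient argument for (iv)).

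Two small remarks. First, in (ii) the Neumann series yields $\|T\|\le 1+ar$ and $\|T^{-1}\|\le \frac{1}{1-ar}$; since $r\ge 1$ this is weaker than the bound $\frac{1}{1-a}$ printed in the statement, which appears to be a typo in the paper rather than a flaw in your argument. Second, in (iii) your reading of $\{g_{n}\}$ as the perturbed sequence from (ii) is the right one (this is how the theorem is invoked later, e.g.\ in Lemma~\ref{quas}); your step passing from blocks of $\{g_{n}\}$ to blocks of $\{e_{n}\}$ is justified because the biorthogonal coefficients $\mu_{n}(h_{i_{k}})$ are uniformly bounded and $\sum_{n>m_{k}}\|g_{n}-e_{n}\|\to 0$, so the transfer costs only a summable tail.
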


It turns out that the class of operators which are complementary to the weak topology is one of the familiar classes of operators.

\begin{theorem}\label{usf}For a continuous linear operator $T$ between Banach spaces $E$ and $F$ the following conditions are equivalent:
\item[(i)] $T$ is upper semi-Fredholm (USF), i.e. it has a finitely dimensional kernel and a closed range;
\item[(ii)] There is a finitely co-dimensional subspace of $E$, on which $T$ is bounded from below;
\item[(iii)] $\dim\Ker \left(T-S\right)<\8$, for every compact operator $S:E\to F$, i.e. there is no compact operator that coincides with $T$ on an infinitely dimensional subspace;
\item[(iv)] No restriction of $T$ to an infinitely dimensional subspace is compact;
\item[(v)] $\left.T\right|_{\Bbo_{E}}$ is proper, i.e. $T^{-1}\left(K\right)\cap \Bbo_{E}$ is compact, for every compact $K\subset F$;
\item[(vi)] $T\ne 0$ and $\left.T\right|_{\Bbo_{E}}$ is closed, i.e. $T\left(A\right)$ is closed, for every closed $A\subset \Bbo_{E}$;
\item[(vii)] If $\left\{e_{n}\right\}_{n\in\N}\subset \So_{E}$ is such that $\left\{Te_{n}\right\}_{n\in\N}$ is convergent, then $\left\{e_{n}\right\}_{n\in\N}$ contains a convergent subsequence;
\item[(viii)] If a normalized sequence is $\rho_{T}$-null, it contains a convergent subsequence;
\item[(ix)] No normalized basic sequence is $\rho_{T}$-null;
\item[(x)] If $\left\{e_{n}\right\}_{n\in\N}$ is basic, then there is $m\in\N$ such that $T$ is bounded from below on $\overline{\spa}\left\{e_{n}\right\}_{n\ge m}$;
\item[(xi)] If $\left\{e_{n}\right\}_{n\in\N}$ is basic, then the restriction of $T$ to $\overline{\spa}\left\{e_{n}\right\}_{n\in\N}$ is not compact;
\item[(xii)] Whenever $H$ is a normed space and $S\in\Lo\left(H,E\right)$ are such that $TS$ is compact, $S$ is compact;
\item[(xiii)] $T$ is complementary to the weak topology on $E$.
\end{theorem}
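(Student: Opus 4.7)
I organize the thirteen conditions into three clusters. The first is the classical Fredholm cluster (i)--(vi), (xii): the core step is (i)$\Rightarrow$(ii), where picking an algebraic complement $M$ of the finite-dimensional $\Ker T$ makes $\left.T\right|_{M}$ a continuous bijection onto the Banach space $TE$, hence bounded below by the open mapping theorem. From (ii) one immediately deduces (iv), (v), (vi), (vii), (viii) on the finite-codimensional $M$, and these transfer to $E$ via a finite-dimensional correction (using a continuous projection onto a complement of $M$). The converse (iv)$\Rightarrow$(i) and the equivalences (iii), (xii)$\Leftrightarrow$(i) are standard Fredholm theory: an infinite-dimensional kernel gives a compact restriction via $\left.T\right|_{\Ker T}=0$, and non-closed range is converted to a compact infinite-dimensional restriction by a standard perturbation/selection argument.

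The basic-sequence cluster (vii)--(xi) is handled using Theorem \ref{basic}. For (i)$\Rightarrow$(x), apply Theorem \ref{basic}(iv) with $H=\Ker T$: since $E_{1}\cap\Ker T$ is finite-dimensional and $E_{1}+\Ker T=T^{-1}\left(T\left(E_{1}\right)\right)$ is closed (because $\left.T\right|_{E_{1}}$ inherits closed range from $T$ by the consequence of Proposition \ref{closed}), some tail $E_{m}$ satisfies $E_{m}\cap\Ker T=\left\{0_{E}\right\}$ and $E_{m}+\Ker T$ closed, whence $\left.T\right|_{E_{m}}$ is a continuous injection with closed range and therefore bounded below. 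Then (x)$\Rightarrow$(xi), (ix) are immediate. For (ix)$\Rightarrow$(vii), extract a basic subsequence $\left\{e_{n_{k}}\right\}$ from a normalized sequence with $\left\{Te_{n}\right\}$ convergent and no norm-convergent subsequence (Bessaga--Pe\l{}czy\'nski selection), then form the normalized block differences $f_{k}=\left(e_{n_{2k}}-e_{n_{2k+1}}\right)\big/\left\|e_{n_{2k}}-e_{n_{2k+1}}\right\|$, which form a basic sequence with $Tf_{k}\to 0$, contradicting (ix). Finally (vii)$\Rightarrow$(viii) is trivial, (viii)$\Rightarrow$(i) uses the same selection principle to rule out an infinite-dimensional kernel and non-closed range, and (xi)$\Rightarrow$(iv) follows from Mazur's theorem that every infinite-dimensional closed subspace contains a basic sequence.

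The novel content is (xiii). For (i)$\Rightarrow$(xiii), decompose $E=\Ker T\oplus M$ with $M$ closed (possible since $\Ker T$ is finite-dimensional) and $\left.T\right|_{M}$ bounded below, and let $P$ be the projection onto $\Ker T$ along $M$. Since $P$ has finite-dimensional range it is weak-to-norm continuous, so any net $\left\{x_{i}\right\}\subset\So_{E}$ null in both the weak topology and $\rho_{T}$ satisfies $Px_{i}\to 0$ in norm and $T\left(x_{i}-Px_{i}\right)=Tx_{i}\to 0$; boundedness below of $\left.T\right|_{M}$ then forces $x_{i}-Px_{i}\to 0$, so $x_{i}\to 0$ in norm, contradicting $\|x_{i}\|=1$. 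Condition (i) of Theorem \ref{compl} thus holds. For the converse (xiii)$\Rightarrow$(iv), suppose $\left.T\right|_{H}$ is compact for some infinite-dimensional closed $H$; for any basic weak neighbourhood $U=\left\{e:~\max_{\nu\in F}\left|\nu\left(e\right)\right|<\varepsilon\right\}$ with finite $F\subset E^{*}$ and any $\delta>0$, the subspace $H_{F}=H\cap\bigcap_{\nu\in F}\Ker \nu$ has codimension at most $\left|F\right|$ in $H$, so is infinite-dimensional; compactness of $\left.T\right|_{H_{F}}$ prevents boundedness below, giving some $h\in\So_{H_{F}}$ with $\|Th\|<\delta$, which places $h$ in $U\cap\left\{e:~\|Te\|<\delta\right\}\cap\So_{E}$ and precludes the separation (ii) of Theorem \ref{compl}. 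The main obstacle is precisely this last implication: since the weak topology is not Frechet--Urysohn in general, the sequential conditions (vii), (viii) of Theorem \ref{compl} are unavailable, and one must work with the neighbourhood formulation (ii) via the cut-down-by-functionals construction of $H_{F}$, which preserves infinite-dimensionality.
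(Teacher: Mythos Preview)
Your step (ix)$\Rightarrow$(vii) has a real gap. You assert that a normalized sequence $(e_n)$ with $(Te_n)$ convergent and no norm-convergent subsequence admits a basic subsequence ``by Bessaga--Pe\l{}czy\'nski selection'', but that principle requires weak nullity (or, in Rosenthal's version, the absence of weakly Cauchy subsequences), not merely the absence of norm-convergent subsequences. The sequence $x_n=(e_1+e_n)/\sqrt{2}$ in $\ell_2$ ($n\ge 2$) is a counterexample: it is normalized, satisfies $\|x_n-x_m\|=1$ for $n\neq m$ so has no norm-convergent subsequence, yet it converges weakly to $e_1/\sqrt{2}\neq 0$, and since every basic sequence in a reflexive space is weakly null, no subsequence is basic. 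Your block-difference trick happens to rescue this particular example only because the differences turn out to be weakly null; to make your argument correct in general you would have to split into cases according to whether $(e_n)$ is relatively weakly compact, subtracting the weak limit first in the compact case before invoking Bessaga--Pe\l{}czy\'nski on the resulting weakly null sequence.

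The paper avoids this difficulty by routing through (xiii) rather than attempting (ix)$\Rightarrow$(vii) directly. It first proves (ix)$\Rightarrow$(xiii) via part (iii) of Theorem~\ref{basic}, where Bessaga--Pe\l{}czy\'nski is legitimately applied to \emph{weakly null} nets, and then establishes (ix)$+$(xiii)$\Rightarrow$(viii): under (ix) a $\rho_T$-null normalized sequence contains no basic subsequence and is therefore relatively weakly compact by \cite[Theorem~1.5.6]{ak}; passing to a weak limit $e$ one checks $Te=0_F$, so $(e_n-e)$ is simultaneously weakly null and $\rho_T$-null, and now (xiii) forces $e_n\to e$ in norm. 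Thus (xiii) is not just another equivalent condition on the list but the very tool that handles the relatively weakly compact case where direct basic-sequence extraction fails. Your own arguments for (i)$\Rightarrow$(xiii) (via weak-to-norm continuity of a finite-rank projection onto $\Ker T$) and (xiii)$\Rightarrow$(iv) (via cutting $H$ down by finitely many functionals) are correct and pleasantly concrete, differing from the paper's appeal to Proposition~\ref{compl1}(ii); the remaining implications are fine as sketched.
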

\begin{proof}
\textbf{Part 1.} (iii)$\Leftrightarrow$(iv)$\Leftrightarrow$(ix)$\Leftrightarrow$(xi)$\Leftrightarrow$(xiii). First, we obviously have (iv)$\Rightarrow$(iii),(xi), and (ix)$\Rightarrow$(xiii) follows from part (iii) of Theorem \ref{basic}.\medskip

(xiii)$\Rightarrow$(iv):  If a restriction of $T$ to a closed subspace $G$ is compact, then $\rho_{T}$ is continuous with respect to the weak topology on $\Bbo_{G}$. Hence, from part (ii) of Proposition \ref{compl1}, the norm and weak topologies coincide on $\Bbo_{G}$, and so $\dim G<\8$.\medskip

(iii),(xi)$\Rightarrow$(ix): If there is a $\rho_{T}$-null normalized basic sequence, by passing to a subsequence we can find a basic $\left\{e_{n}\right\}_{n\in\N}\subset\So_{E}$ such that $\sum\limits_{n\in\N}\|Te_{n}\|<+\8$. Then it follows from part (i) of Theorem \ref{basic} that $\left.T\right|_{G}$ is compact, where $G=\overline{\spa}\left\{e_{n}\right\}_{n\in\N}$.\bigskip

\textbf{Part 2.} (i)$\Leftrightarrow$(ii)$\Rightarrow$(v)$\Leftrightarrow$(vi)$\Leftrightarrow$(vii)$\Rightarrow$(viii)$\Rightarrow$(ix). First, (i)$\Leftrightarrow$(ii) and (v)$\Leftrightarrow$(vii) are standard, (vii)$\Rightarrow$(viii) is trivial, and (viii)$\Rightarrow$(ix) follows from the fact that a normalized basic sequence is a discrete closed set. Let $H=\Ker T$.\medskip

(v)$\Rightarrow$(vi) for general maps see in \cite[3.7.18]{engelking}. To get the converse we need to show that the preimages of the singletons are compact. In the present setting this amounts to showing that $\dim H<\8$. If this is not true, there is a sequence $\left\{e_{n}\right\}_{n\in\N}\subset \frac{1}{2}\Bbo_{H}$ with no convergent subsequences. Then, $G=\left\{e_{n}+\frac{1}{n}f\right\}_{n\in\N}$ also has no convergent subsequences, where $f\not\in H$ with $\|f\|=\frac{1}{2}$. But then $G$ is a closed subset of $\Bbo_{E}$ whereas $TG=\left\{\frac{1}{n}Tf\right\}_{n\in\N}$ is not closed. Contradiction.\medskip

(i)$\Rightarrow$(v): Let $G$ be a (finitely co-dimensional) subspace of $E$ such that $E=H+G$. Note that $E\simeq G\oplus_{\8}H$, and so $\Bbo_{E}\subset r\Bbo_{G}+r\Bbo_{H}$, for some $r>0$. If $K\subset F$ is compact, then $T^{-1}\left(K\right)\cap \Bbo_{E}\subset \left(\left.T\right|_{G}^{-1}\left(K\right)\cap r\Bbo_{E}\right) + r\Bbo_{H}$. Since $\left.T\right|_{G}$ is a homeomorphism onto its image, the first set in the sum is compact. Since $\dim H<+\8$, the second set in the sum is also compact, and so $T^{-1}\left(K\right)\cap \Bbo_{E}$ is compact.\medskip

\textbf{Part 3.} Leftovers. First, (x)$\Rightarrow$(xi) is obvious; (xii)$\Rightarrow$(iv) follows from considering the inclusion operator of a subspace of $E$.

(v)$\Rightarrow$(xii): We may assume that $\|S\|=1$. Since $TS$ is a compact operator, $\overline{TS \Bo_{H}}$ is compact in $F$, from where $S\Bo_{H}\subset \Bbo_{E}\cap T^{-1}\overline{TS \Bo_{H}}$. Therefore, $S\Bo_{H}$ is relatively compact, and so $S$ is a compact operator.\medskip

(i)$\Rightarrow$(x): Since $\dim H<\8$, from part (iv) of Theorem \ref{basic} there is $m\in\N$ such that $H\cap\overline{\spa}\left\{e_{n}\right\}_{n\ge m}=\left\{0_{E}\right\}$. Hence, $T$ is an injection on $\overline{\spa}\left\{e_{n}\right\}_{n\ge m}$ and has a closed range. Thus, $T$ is bounded from below on $\overline{\spa}\left\{e_{n}\right\}_{n\ge m}$.\medskip

(viii)+(iv)$\Rightarrow$(i): First, (iv) trivially implies that $\dim H<\8$. Let $G$ be a (finitely co-dimensional) subspace of $E$ such that $E=H+G$. Since $\left.T\right|_{G}$ is an injection, $TE=TG$ is closed if and only if $\left.T\right|_{G}$ is bounded from below. Assume that this is not the case. Then there is a $\rho_{T}$-null sequence $\left\{g_{n}\right\}_{n\in\N}\subset \So_{G}$. From (viii), by passing to a subsequence, we may assume that $g_{n}\xrightarrow[n\to\8]{}g\in \So_{G}$. But then $Tg=\lim\limits_{n\to\8} Tg_{n}=0_{F}$, which contradicts injectivity of $T$ on $G$.\medskip

(ix)+(xiii)$\Rightarrow$(viii): Let $\left\{e_{n}\right\}_{n\in\N}\subset \So_{E}$ be $\rho_{T}$-null. From (ix) this sequence does not contain basic subsequences, and so is weakly relatively compact (see \cite[Theorem 1.5.6]{ak}). By passing to a subsequence we may assume that $e_{n}\xrightarrow[n\to\8]{w}e\in E$. For any $\nu\in F^{*}$ we have that $$\left|\nu\left(Te\right)\right|=\left|T^{*}\nu\left(e\right)\right|=\lim\limits_{n\to\8}\left|T^{*}\nu\left(e_{n}\right)\right|=\lim\limits_{n\to\8}\left|\nu\left(Te_{n}\right)\right|\le \|\nu\|\lim\limits_{n\to\8}\|Te_{n}\|=0,$$ from where $Te=0_{F}$. But then $\left\{e_{n}-e\right\}_{n\in\N}$ is simultaneously $\rho_{T}$-null and weakly null, and so from (xiii) $e_{n}\xrightarrow[n\to\8]{}e$.
\end{proof}

\begin{remark}
Many of the conditions considered in the theorem are well-known, see e.g. \cite{aiena}, \cite{gm}, \cite{sing2}. It would be desirable to get a direct proof of (xiii)$\Rightarrow$(v) in the light of condition (iv) in Theorem \ref{compl}.
\end{remark}

It is clear that any bounded from below operator is USF, and that a restriction of a USF operator to an infinitely dimensional subspace of $E$ is USF. Combining Proposition \ref{usf} with Remark \ref{complo2} we get the following corollaries.

\begin{corollary}
Let $T$ be a continuous linear operator between Banach spaces $E$ and $F$. Then the collection of subspaces of $E$ on which $T$ is USF is gap-open.
\end{corollary}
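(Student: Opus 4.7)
The plan is to deduce this as an immediate consequence of two prior results. First, by the equivalence (i)$\Leftrightarrow$(xiii) of Theorem \ref{usf}, a continuous operator $S$ between Banach spaces is USF precisely when $S$ is complementary to the weak topology on its domain. Applied to the restrictions $T|_H$ for closed subspaces $H\subseteq E$, and using Hahn--Banach to identify the weak topology of $H$ as a subspace of $E$ with its intrinsic weak topology as a Banach space, this gives: $T|_H$ is USF if and only if $T$ is weakly complementary on $H$ in the sense of Remark \ref{complo2}.

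Next I take $\tau$ to be the weak topology on $E$ and invoke Remark \ref{complo2}, which asserts that the collection of closed subspaces of $E$ on which $T$ is $\tau$-complementary is gap-open. The remark is itself an application of Corollary \ref{open} to the topology $\pi=\tau\vee\rho_T$: a closed subspace $H$ belongs to the collection precisely when $\tau|_H$ and $\rho_T|_H$ generate the norm topology on $H$, equivalently when $\pi$ coincides with the norm topology on $H$, and Corollary \ref{open} guarantees gap-openness of the latter set. Concatenating with the equivalence from the previous paragraph yields the corollary.

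The only substantive point to verify is the compatibility between the notion of ``$T|_H$ is USF'' (which refers to the kernel and range of $T|_H\colon H\to F$ as an operator between two Banach spaces) and the notion of ``$T$ is weakly complementary on $H$'' (which refers to the weak topology on $E$ restricted to $H$ together with $\rho_T|_H$). This compatibility is precisely the Hahn--Banach identification noted above, so I do not expect any genuine difficulty; the whole proof reduces essentially to citing Theorem \ref{usf} and Remark \ref{complo2} in sequence.
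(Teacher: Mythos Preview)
Your proposal is correct and follows essentially the same route as the paper: the paper derives the corollary by combining Theorem~\ref{usf} (the equivalence of USF with being complementary to the weak topology) with Remark~\ref{complo2} (gap-openness of the set of subspaces on which $T$ is $\tau$-complementary). Your additional remark about Hahn--Banach identifying the subspace weak topology with the restricted weak topology is a helpful clarification the paper leaves implicit.
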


\begin{corollary}\label{usf1}
For a fixed subspace $H$ of a Banach space $E$, the collection of operators which are USF on $H$ is open.
\end{corollary}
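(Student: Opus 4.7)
The plan is to identify ``USF on $H$'' with ``complementary to the weak topology on $H$'' and then invoke the openness result already packaged in Remark \ref{complo2}. This makes the proof essentially a direct reading of machinery that is now in place.

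Concretely, write $w_H$ for the weak topology on $H$. By the equivalence (i)$\Leftrightarrow$(xiii) of Theorem \ref{usf}, an operator $S\in\Lo\left(H,F\right)$ is USF if and only if $S\in U_{w_H}\left(H,F\right)$. Hence the set we wish to show is open is
\[
\left\{T\in \Lo\left(E,F\right)\,:\, T|_{H}\in U_{w_H}\left(H,F\right)\right\},
\]
i.e.\ the preimage of $U_{w_H}\left(H,F\right)\subset\Lo\left(H,F\right)$ under the restriction map $T\mapsto T|_{H}$.

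The first ingredient is Proposition \ref{complo}(i), applied to the pair $\left(H,F\right)$ with $\tau=w_H$: it tells us that $U_{w_H}\left(H,F\right)$ is open in $\Lo\left(H,F\right)$. The second ingredient, already noted at the start of Remark \ref{complo2}, is the trivial bound $\|T|_{H}-S|_{H}\|\le \|T-S\|$, which makes the restriction map a contraction (in particular continuous) from $\Lo\left(E,F\right)$ to $\Lo\left(H,F\right)$. Combining the two, the preimage in question is open in $\Lo\left(E,F\right)$.

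There is no real obstacle beyond noticing that the corollary is the ``fix $H$, vary $T$'' counterpart of the preceding ``fix $T$, vary $H$'' corollary, and that both fall out of Remark \ref{complo2} once USF is recognized as weak-complementarity. The only point requiring any caution is the identification of $w_H$ with the subspace weak topology on $H$, which is ensured by Hahn--Banach so that Theorem \ref{usf} applies verbatim to $T|_{H}\in\Lo\left(H,F\right)$.
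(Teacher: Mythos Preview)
Your argument is correct and is precisely the approach the paper has in mind: the corollary is obtained by combining Theorem \ref{usf} (USF $=$ weakly complementary) with Remark \ref{complo2} (the restriction bound gives openness of $U_{\tau}(H,F)\cap\Lo(E,F)$), exactly as the paper states just before the two corollaries. Your extra remark about Hahn--Banach identifying $w_H$ with the subspace weak topology is a harmless clarification but not actually needed, since Theorem \ref{usf} is applied to $H$ as a Banach space in its own right.
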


In a reflexive space USF operators are characterized by a stronger condition than (ix) in Theorem \ref{usf}.

\begin{theorem}\label{ref}$E$ is reflexive if and only if the USF property for $T\in\Lo\left(E,F\right)$ is equivalent to existence of $\delta>0$ such that no normalized basic sequence $\left\{e_{n}\right\}_{n\in\N}$ satisfies $\rho_{T}\left(e_{n}\right)<\delta$.
\end{theorem}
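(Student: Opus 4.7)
The plan is to prove the two directions separately. The implication ``uniform $\Rightarrow$ USF'' is immediate, since the uniform condition trivially entails condition (ix) of Theorem~\ref{usf}. So the substantive work is the forward implication for reflexive $E$, and the construction of a USF operator violating the uniform condition when $E$ is non-reflexive.

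Assume $E$ is reflexive and $T$ is USF. By (xiii) of Theorem~\ref{usf}, $T$ complements the weak topology, and since $\rho_{T}$ is a semi-norm, condition (v) of Theorem~\ref{compl} (with $\tau$ equal to the weak topology) furnishes $\delta>0$ and a weak neighborhood $U$ of $0_{E}$ with $U\cap \So_{E}\cap\{f:\rho_{T}(f)\le\delta\}=\varnothing$. Let $\{e_{n}\}\subset \So_{E}$ be any normalized basic sequence. Its closed linear span is reflexive as a subspace of a reflexive space and admits $\{e_{n}\}$ as a Schauder basis; by James's characterization of reflexivity, every basis of a reflexive space is shrinking, so $e_{n}\to 0$ weakly in the span and hence in $E$. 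Then $e_{n}\in U$ for all sufficiently large $n$, forcing $\rho_{T}(e_{n})>\delta$; thus no normalized basic sequence can have $\rho_{T}(e_{n})<\delta$ for all $n$.

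For the converse, suppose $E$ is non-reflexive. By Eberlein--\v{S}mulian, some bounded sequence in $E$ has no weakly convergent subsequence; by Bessaga--Pe\l{}czy\'nski it admits a basic subsequence, producing a normalized basic $\{u_{n}\}\subset \So_{E}$ that is not weakly null. Hahn--Banach and a further passage to a subsequence yield $\nu\in E^{*}$ with $\nu(u_{n})\ge c>0$ for all $n$. Set $\tilde u_{n}:=u_{n}/\nu(u_{n})$, so that $\nu(\tilde u_{n})=1$ and $\{\tilde u_{n}\}$ remains basic with $\|\tilde u_{n}\|$ bounded above and away from zero. Choose $h\in E$ with $h\notin\overline{\spa}\{\tilde u_{n}\}$: if the closed span is proper this is immediate, and otherwise one takes $h=\tilde u_{1}$ and passes to $\{\tilde u_{n}\}_{n\ge 2}$, invoking the basis property to exclude $\tilde u_{1}$ from the new span. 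Let $T:E\to E/\spa\{h\}$ be the quotient map; it is USF with $\Ker T=\spa\{h\}$.

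For each $\epsilon\in(0,1)$ define $f_{n}:=(1-\epsilon)h+\epsilon\tilde u_{n}$. Since $h\in\Ker T$, one has $\|Tf_{n}\|\le\epsilon\|\tilde u_{n}\|$, whereas $\|f_{n}\|$ is bounded above and, for $\epsilon$ small, bounded away from zero; hence $\rho_{T}(f_{n}/\|f_{n}\|)=O(\epsilon)$. The crux is to verify that $\{f_{n}\}$ is basic. Writing $\sum_{n}a_{n}f_{n}=(1-\epsilon)\bigl(\sum_{n} a_{n}\bigr)h+\epsilon\sum_{n} a_{n}\tilde u_{n}$ and using the key estimate $|\sum_{n} a_{n}|=|\nu(\sum_{n} a_{n}\tilde u_{n})|\le\|\nu\|\,\|\sum_{n} a_{n}\tilde u_{n}\|$ (which rests on $\nu(\tilde u_{n})\equiv 1$), together with the bounded projections of the closed direct sum $\spa\{h\}\oplus\overline{\spa}\{\tilde u_{n}\}$ and the basis constant of $\{\tilde u_{n}\}$, a direct partial-sum comparison bounds the basis constant of $\{f_{n}\}$ by a quantity of order $1/\epsilon$, finite for each fixed $\epsilon>0$. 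Letting $\epsilon\downarrow 0$ produces, for each $\delta>0$, a normalized basic sequence whose $\rho_{T}$-values all lie below $\delta$, showing $T$ is USF but fails the uniform condition. The main obstacle is precisely this basic-ness verification: the required bound on $|\sum_{n} a_{n}|$ demands a basic sequence that is \emph{not} weakly null, which is exactly what reflexivity rules out.
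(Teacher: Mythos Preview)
Your proof is correct and follows essentially the same line as the paper's. The necessity direction is identical (basic sequences in reflexive spaces are shrinking, hence weakly null, so condition (v) of Theorem~\ref{compl} supplies the uniform $\delta$). For the converse, the paper invokes Singer's theorem to produce a type~P* normalized basic sequence $\{f_{n}\}$ inside a hyperplane $F$ and uses the projection onto $F$ as the USF operator; you instead manufacture the P* property yourself via Eberlein--\v{S}mulian, Bessaga--Pe\l{}czy\'nski, and Hahn--Banach (your estimate $|\sum a_{n}|\le\|\nu\|\,\|\sum a_{n}\tilde u_{n}\|$ is exactly the P* inequality), and then take the quotient by a one-dimensional subspace in place of the projection. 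The perturbation $f_{n}=(1-\epsilon)h+\epsilon\tilde u_{n}$ and the verification that it is basic with constant $O(1/\epsilon)$ mirror the paper's $e_{n}=e+tf_{n}$ computation; your use of the bounded projection associated with the closed direct sum $\spa\{h\}\oplus\overline{\spa}\{\tilde u_{n}\}$ replaces the paper's explicit $\oplus_{\infty}$ renorming, which is a harmless variation.
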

\begin{proof}
Necessity: It is clear that the the existence of $\delta$ as in the proposition is stronger than the condition (ix) in Theorem \ref{usf}. On the other hand, in a reflexive Banach space every basic sequence is weakly null (in fact, it is shrinking which is a much stronger condition, see \cite[Section 3.2]{ak}), and so combining  the condition (xii) in Theorem \ref{usf} with the condition (v) in Theorem \ref{compl} guarantees existence of $\delta$.\medskip

Sufficiency: Assume that $E$ is not reflexive and take any $e\in \So_{E}$. Let $F$ be a closed subspace of co-dimension $1$ that does not include $e$. Without loss of generality we may assume that $E=\spa\left\{e\right\}\oplus_{\8} F$. Since $F$ is not reflexive it follows from \cite{sing1} that there is a normalized basic sequence $\left\{f_{n}\right\}_{n\in\N}\subset F$ of type P*, i.e. such that $\left|a_{1}+...+a_{n}\right|\le r\|a_{1}f_{1}+...+a_{n}f_{n}\|$, for some $r>0$ and any $a_{1},...,a_{n}\in\R$.

For $t>0$ let $e_{n}= e+t f_{n}$; then $\|e_{n}\|=t+1$, for every $n\in\N$. If $P$ is the (USF) projection from $E=\spa\left\{e\right\}\oplus_{\8} F$ onto $F$, then $\|P\frac{e_{n}}{t+1}\|=\frac{t}{t+1}$, for every $n\in\N$. Since $\left\{\frac{e_{n}}{t+1}\right\}_{n\in\N}$ is a normalized sequence, and $t$ can be taken arbitrarily small, it is left to show that $\left\{e_{n}\right\}_{n\in\N}$ is a basic sequence. Let $R$ be the basic constant of $\left\{f_{n}\right\}_{n\in\N}$. For every $a_{1},...,a_{n}\in\R$ and $m<n$ we have
\begin{align*}
\|a_{1}e_{1}+...+a_{m}e_{m}\|&= \left|a_{1}+...+a_{m}\right|+t\|a_{1}f_{1}+...+a_{m}f_{m}\|\\
&\le \left(r+t\right)\|a_{1}f_{1}+...+a_{m}f_{m}\|\le \left(r+t\right)R\|a_{1}f_{1}+...+a_{n}f_{n}\|\\
&\le \frac{\left(r+t\right)R}{t}\|a_{1}e_{1}+...+a_{n}e_{n}\|,
\end{align*}
and so $\left\{e_{n}\right\}_{n\in\N}$ is a basic sequence with the basis constant at most $\frac{\left(r+t\right)R}{t}$.
\end{proof}

The class of operators that is in a way opposite to the USF, and generalize compact operators are \emph{strictly singular} (SS) operators.

\begin{proposition}\label{ss}For a continuous linear operator $T$ between Banach spaces $E$ and $F$ the following conditions are equivalent:
\item[(i)] $T$ is SS, i.e. no restriction of $T$ to an infinitely dimensional subspace of $E$ is bounded from below;
\item[(ii)] No restriction of $T$ to an infinitely dimensional subspace of $E$ is USF;
\item[(iii)] Every infinitely dimensional closed subspace $H$ of $E$ contains an infinitely dimensional closed sub-subspace $G\subset H$ such that $\left.S\right|_{G}$ is compact;
\item[(iv)] Every infinitely dimensional closed subspace $H$ of $E$ contains a normalized weakly null $\rho_{S}$-null net;
\item[(v)] Every infinitely dimensional closed subspace $H$ of $E$ contains a normalized basic $\rho_{S}$-null net.
\end{proposition}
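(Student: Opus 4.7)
I will establish the cycle (i)$\Leftrightarrow$(ii)$\Leftrightarrow$(iii)$\Rightarrow$(iv)$\Rightarrow$(v)$\Rightarrow$(ii). The equivalence (i)$\Leftrightarrow$(ii) is immediate from condition (ii) of Theorem \ref{usf}: a USF restriction $T|_{H}$ would be bounded from below on a finite-codimensional, hence still infinite-dimensional, subspace of $H$. The equivalence (ii)$\Leftrightarrow$(iii) is obtained by applying Theorem \ref{usf}(iv) to $T|_{H}$: the restriction fails to be USF precisely when some infinite-dimensional subspace $G\subset H$ supports a compact $T|_{G}$ (and replacing $G$ by its closure preserves both infinite-dimensionality and compactness of $T|_{G}$, using the remark after Proposition \ref{closed}).

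For (iii)$\Rightarrow$(iv), given infinite-dimensional closed $H$, pick infinite-dimensional closed $G\subset H$ with $T|_{G}$ compact and build a net indexed by the finite-dimensional subspaces $V$ of $G^{*}$, directed by inclusion: since $\dim G=\8$, the intersection $\bigcap_{\nu\in V}\Ker\nu$ is nonzero, so we may pick $e_{V}\in\So_{G}$ inside it. This net is normalized and weakly null in $G$, and hence in $H$ since the weak topology of $G$ is induced from that of $H$ by Hahn--Banach. Compactness of $T|_{G}$ promotes weak-to-norm continuity on the bounded set $\Bbo_{G}$: any subnet of $\{Te_{V}\}$ lies in the norm-compact set $\overline{T(\Bbo_{G})}$, and since $T$ is weakly continuous its only possible weak cluster point is $0_{F}$, so $Te_{V}\to 0_{F}$ in norm and the net is $\rho_{T}$-null.

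For (iv)$\Rightarrow$(v), again fix infinite-dimensional closed $H$ and apply Mazur's theorem to extract a normalized basic sequence $\{g_{n}\}\subset H$; set $H'=\overline{\spa}\{g_{n}\}$. Applying (iv) to $H'$ yields a normalized weakly null $\rho_{T}$-null net in $H'$, to which Theorem \ref{basic}(iii) (with $\{e_{n}\}=\{g_{n}\}$ and the auxiliary seminorm $\rho=\rho_{T}$) applies, extracting a basic $\rho_{T}$-null subsequence. Finally, (v)$\Rightarrow$(ii) is the contrapositive of Theorem \ref{usf}(ix) applied to $T|_{H}$: a USF restriction would admit no normalized basic $\rho_{T}$-null sequence in $H$.

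The main subtlety I anticipate lies in (iii)$\Rightarrow$(iv), where the compact-operator-to-norm-null step must be phrased for \emph{nets} rather than sequences, since condition (iv) is stated in that generality; the subnet and weak-cluster-point argument handles this cleanly without separability assumptions. The remainder of the cycle is an assembly of previously established results from Theorems \ref{usf} and \ref{basic}, together with the standard fact that every infinite-dimensional Banach space contains a basic sequence.
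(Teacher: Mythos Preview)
Your proof is correct. The paper's own argument is considerably shorter: after handling (i)$\Leftrightarrow$(ii) exactly as you do, it simply observes that each of (iii), (iv), (v) is the negation, applied to the restriction $T|_{H}$, of one of the equivalent USF characterizations already established in Theorem~\ref{usf} --- namely conditions (iv), (xiii), and (ix) there, respectively --- so each is directly equivalent to (ii) in one stroke. Your cycle is more hands-on: you build the weakly null net explicitly from the compact restriction in (iii)$\Rightarrow$(iv), and for (iv)$\Rightarrow$(v) you pass through Mazur's theorem and Theorem~\ref{basic}(iii) rather than simply citing the equivalence (xiii)$\Leftrightarrow$(ix) of Theorem~\ref{usf}. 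Your route has the merit of making the underlying constructions visible, but the paper's route is the natural dividend of having already packaged all of this into Theorem~\ref{usf}.
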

\begin{proof}
(ii)$\Rightarrow$(i) is trivial, while the converse follows from the fact that a USF operator is bounded from below on a finitely co-dimensional subspace.

Equivalence of each of (iii), (iv) and (v) with (ii) follows from the equivalent characterizations of USF operators in Theorem \ref{usf}
\end{proof}

It is clear that a restriction of a SS operator to an infinitely dimensional subspace of $E$ is SS, from where one can deduce that a linear combination of SS operators is SS. Since for every subspace $H$, the set of operators which are USF on $H$ is open, it follows that SS operators form a closed subspace of $\Lo\left(E,F\right)$. It is easy to see that if $T$ is USF, and $S$ is SS, then $T-S$ is USF, and no restriction of $T$ to an infinitely dimensional subspace of $E$ is SS.\medskip

The fact that USF operators are a specific case of operators complementary to a given topology, it is natural to introduce classes of operators analogous to several classes associated to USF operators. Let $P_{\tau}\left(E,F\right)$ be the set of all operators $S\in\Lo\left(E,F\right)$ such that $T+S\in U_{\tau}\left(E,F\right)$, for every $T\in U_{\tau}\left(E,F\right)$. Let $I_{\tau}\left(E,F\right)$ be the set of all operators $S\in\Lo\left(E,F\right)$ such that $Id_{E}-TS\in U_{\tau}\left(E,F\right)$, for every $T\in \Lo\left(F,E\right)$. The following is proven similarly to \cite[theorems 7.21, 7.46]{aiena}.

\begin{proposition}
$P_{\tau}\left(E,F\right)\subset I_{\tau}\left(E,F\right)$ is a closed subspace of $\Lo\left(E,F\right)$, such that if $G$ is a normed space, $R_{1}\in \Lo\left(E\right)$, $R_{2}\in \Lo\left(F,G\right)$, and $S\in P_{\tau}\left(E,F\right)$, then $R_{1}SR_{2}\in P_{\tau}\left(E,G\right)$.
\end{proposition}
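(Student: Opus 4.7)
The proposition packages four assertions: (a) $P_\tau(E,F)$ is a linear subspace of $\Lo(E,F)$; (b) it is norm-closed in $\Lo(E,F)$; (c) the two-sided ideal property, read in the only order that type-checks, namely $R_2 S R_1 \in P_\tau(E,G)$ with $R_1 \in \Lo(E)$ and $R_2 \in \Lo(F,G)$; and (d) the inclusion $P_\tau(E,F) \subset I_\tau(E,F)$. My plan is to dispatch (a) and (b) quickly, identify (c) as the main obstacle, and then deduce (d) from (a) together with (c).

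For (a), additivity of $P_\tau$ follows from applying the perturbation property of the two summands in succession: if $S_1, S_2 \in P_\tau$ and $T \in U_\tau$, then $T + S_1 \in U_\tau$, whence $(T + S_1) + S_2 \in U_\tau$. Scalar invariance uses that $U_\tau(E,F)$ is stable under nonzero scalar multiplication: if $\alpha \ne 0$ and $T \in U_\tau$, then $\alpha^{-1} T \in U_\tau$ since $\rho_{\alpha^{-1} T}$ is equivalent to $\rho_T$ (Proposition \ref{compl1}(iii)); then $T + \alpha S = \alpha(\alpha^{-1}T + S) \in U_\tau$. For (b), the input is the openness of $U_\tau(E,F)$ in $\Lo(E,F)$ (Proposition \ref{complo}(i)). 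If $S_n \to S$ in operator norm with each $S_n \in P_\tau$ and $T \in U_\tau$ is arbitrary, then $T + S_n \in U_\tau$ for every $n$, and since $\|(T+S) - (T+S_n)\| = \|S - S_n\| \to 0$, the openness of $U_\tau$ forces $T + S \in U_\tau$.

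The hard part is (c), which mirrors the classical perturbation-class proofs of \cite[Theorems 7.21, 7.46]{aiena}. I will split the task into post-composition $R_2 S \in P_\tau(E,G)$ and pre-composition $S R_1 \in P_\tau(E,F)$, and combine them. For the post-composition direction, fix $T \in U_\tau(E,G)$ and form the enriched operator $\widehat T : E \to G \oplus F$ by $\widehat T(e) := ((T + R_2 S)e, Se)$; the bounded linear map $N : G \oplus F \to G$, $N(g,f) := g - R_2 f$, satisfies $N \widehat T = T$, so by Proposition \ref{complo}(ii), $\widehat T \in U_\tau(E, G \oplus F)$. The delicate step is to descend from $\widehat T \in U_\tau$ to its first coordinate $T + R_2 S \in U_\tau(E, G)$: any hypothetical $\tau$-null net $\{e_i\} \subset \So_E$ on which $(T + R_2 S) e_i \to 0$ must, along a subnet, have $\|S e_i\|$ bounded away from zero (otherwise $\|T e_i\| \to 0$ would contradict $T \in U_\tau$); this subnet is then transported, via a carefully chosen test operator $T' \in U_\tau(E,F)$ built from $T$ by right-inversion of $R_2$ on the pertinent range, into a failure of the perturbation property of $S$, yielding the contradiction. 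The pre-composition case uses the bounded-below diagonal $\Delta: E \to E \oplus E$, $e \mapsto (e, R_1 e)$, and the auxiliary $A: E \oplus E \to F$, $A(e_1, e_2) := T e_1 + S e_2$, so that $A \Delta = T + S R_1$, together with Proposition \ref{complo}(ii) and (iii) to transport $\tau$-complementarity through the factorization.

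Finally, (d) follows from (a) and (c). The identity $Id_E$ belongs to $U_\tau(E,E)$ because $\rho_{Id_E} = \|\cdot\|$ is trivially $\tau$-complementary. Given $S \in P_\tau(E,F)$ and $T \in \Lo(F,E)$, part (c) applied with $R_1 = Id_E$ and $R_2 = T$ gives $TS \in P_\tau(E,E)$; by (a), $-TS \in P_\tau(E,E)$; and applying the defining property of $P_\tau$ to $Id_E \in U_\tau(E,E)$ yields $Id_E - TS \in U_\tau(E,E)$. Since $T \in \Lo(F,E)$ was arbitrary, $S \in I_\tau(E,F)$, completing the proof.
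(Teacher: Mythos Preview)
The paper itself gives no proof of this proposition beyond the sentence ``proven similarly to \cite[theorems 7.21, 7.46]{aiena}'', so there is nothing detailed to compare against; your write-up is already more explicit than the paper's. Your treatment of (a), (b) and the derivation of (d) from (a)+(c) is correct and clean (and you are right that the composition in the statement must be read as $R_{2}SR_{1}$).

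The real gap is in (c), and it is not cosmetic. For post-composition you arrive at a $\tau$-null normalized net $\{e_{i}\}$ with $(T+R_{2}S)e_{i}\to 0$ and $\|Se_{i}\|$ bounded away from~$0$, and then assert the existence of a ``carefully chosen test operator $T'\in U_{\tau}(E,F)$ built from $T$ by right-inversion of $R_{2}$'' that would contradict $S\in P_{\tau}(E,F)$. But $R_{2}\in\Lo(F,G)$ is an arbitrary bounded operator: it need not be injective, need not have closed range, and $T(E)$ need not lie in $R_{2}(F)$; there is no general way to manufacture $T'\in U_{\tau}(E,F)$ with $T'e_{i}\approx -Se_{i}$ from these data. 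This step is precisely the substance of the Aiena-style argument and cannot be waved through.

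For pre-composition your factorization $A\Delta=T+SR_{1}$ with $\Delta:E\to E\oplus E$ and $A:E\oplus E\to F$ does not connect to Proposition~\ref{complo}(ii)/(iii) in the direction you need. Part (ii) says $A\Delta\in U_{\tau}\Rightarrow\Delta\in U_{\tau}$, which is the wrong implication; part (iii) concerns right-composition by a bounded-below $R\in\Lo(E)$, whereas $\Delta$ maps into $E\oplus E$, a space on which no $\tau$ has been defined. To run a factorization argument here you would first have to set up a suitable topology on the direct sum and establish the corresponding $U_{\tau}$-membership of $A$, which you have not done and which is again the nontrivial content borrowed from \cite{aiena}. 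As written, part (c) is a sketch of an intention rather than a proof.
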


\section{Dispersed subspaces}\label{dispe}

Everywhere in this section $F$ is a Banach lattice. A subspace $E$ of a Banach lattice $F$ is called \emph{dispersed} if it does not contain an \emph{almost disjoint sequence}, i.e. a sequence $\left\{e_{n}\right\}_{n\in\N}\subset \So_{E}$, for which there is a disjoint $\left\{f_{n}\right\}_{n\in\N}\subset F$ such that $\|f_{n}-e_{n}\|\xrightarrow[n\to\8]{} 0$. This condition is equivalent to the fact that if $\left\{f_{n}\right\}_{n\in\N}\subset\So_{F}$ is disjoint, then $\liminf\limits_{n\to\8}\rho_{E}\left(f_{n}\right)>0$. Let us introduce variations of this concept.\medskip

We will call $E$ \emph{strictly dispersed} if there is $\delta>0$ such that there is no disjoint $\left\{f_{n}\right\}_{n\in\N}\subset\So_{F}$ at the distance less than $\delta$ from $E$. It is clear that every strictly dispersed subspace is dispersed. Using metric $d_{3}$ on the set of the subspaces it is easy to show that the set of strictly dispersed subspaces of $F$ is gap-open.\medskip

Recall that the \emph{unbounded norm (un-) topology} on a Banach lattice is the linear topology determined by the neighborhoods of zero of the form $\left\{f\in F, \|\left|f\right|\wedge h\|<\varepsilon \right\}$, where $h\in F_{+}$ and $\varepsilon>0$. More information about the un-topology see in \cite{dot}, \cite{klt}, \cite{kmt}. We also introduce the (weaker) \emph{una-topology}, which is determined by the neighborhoods of zero of the form $\left\{f\in F, \|\left|f\right|\wedge h\|<\varepsilon \right\}$, where $h\in F^{a}_{+}$ and $\varepsilon>0$ ($F^{a}$ stands for the closed ideal of the order continuous elements of $F$, see \cite[Proposition 2.4.10]{mn}). As a disjoint order-bounded sequence in an order continuous Banach lattice is null, it follows that any disjoint sequence is una-null. Since a Banach lattice is order continuous if and only if $F^{a}=F$, in the order continuous Banach lattices the un- and una-topologies coincide. It is not hard in fact to show that the converse is also true (an order bounded non-null disjoint sequence is una-null, but not un-null).

We will call $E$ \emph{weakly/strongly dispersed} if $0_{F}$ is separated from $\So_{E}$ with respect to the un-/una-topology. Obviously, these conditions coincide if $F$ is order continuous. The following is a particular case of Proposition \ref{compl2} and Corollary \ref{open}.

\begin{proposition}\label{ws} The set of weakly/strongly dispersed subspaces is gap-open. Moreover, for a closed subspace $E$ of $F$ the following conditions are equivalent:
\item[(i)] $E$ is weakly/strongly dispersed;
\item[(ii)] The un-/una-topology coincides with the norm topology on $E$;
\item[(iii)] $\rho_{E}$ and the un-/una-topology are complementary on $F$.
\end{proposition}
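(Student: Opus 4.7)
The plan is straightforward: the entire statement reduces to the abstract framework of Section \ref{comple} applied with $\tau$ equal to the un-topology (for the ``weakly'' half) and with $\tau$ equal to the una-topology (for the ``strongly'' half). First I would verify the admissibility of these choices, namely that both the un- and una-topologies are linear topologies on $F$ that are weaker than the norm topology. This is immediate from their definition: the defining basic neighborhoods $\{f\in F,\ \|\left|f\right|\wedge h\|<\varepsilon\}$ are norm-open since $\|\left|f\right|\wedge h\|\le\|f\|$, and the operations of addition and scalar multiplication are continuous for these seminorms.

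Next, I would match definitions. The notion of $E$ being weakly (resp.\ strongly) dispersed is, by construction, exactly condition (i) of Proposition \ref{compl2} specialized to $\tau$ = un-topology (resp.\ una-topology): both simply assert that $0_F$ is $\tau$-separated from $\So_E$. Consequently, the equivalence of (i), (ii), (iii) in the current proposition is nothing more than Proposition \ref{compl2} invoked with this choice of $\tau$. No extra lattice-theoretic content is needed at this stage.

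For the gap-openness statement I would appeal to Corollary \ref{open}, again with $\tau$ set to the un-topology (resp.\ una-topology). That corollary says the family of closed subspaces on which $\tau$ coincides with the norm topology is gap-open, and via the equivalence (i)$\Leftrightarrow$(ii) we have just established, this family is precisely the collection of weakly (resp.\ strongly) dispersed subspaces.

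I do not foresee any real obstacle: the whole proposition is a direct translation of the abstract machinery developed in Section \ref{comple}, and the proof amounts to verifying that the un- and una-topologies satisfy the hypotheses imposed there, then quoting Proposition \ref{compl2} and Corollary \ref{open} verbatim.
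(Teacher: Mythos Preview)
Your proposal is correct and matches the paper's own justification exactly: the paper simply states that the proposition is a particular case of Proposition \ref{compl2} and Corollary \ref{open}, which is precisely what you do by specializing $\tau$ to the un-/una-topology.
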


Recall that from Kadec-Pelczynski theorem, we can select an almost disjoint sequence from any un-null net (see \cite[Theorem 3.2]{dot}). Hence, every dispersed subspace is weakly dispersed. Note that unless $F$ is order continuous, the converse is false: the span of an order bounded non-null disjoint sequence is trivially not dispersed, but the un-topology coincides with the norm topology on it, as it has a strong unit (\cite[Theorem 2.3]{kmt}).

In a discrete order continuous Banach lattice the un-topology is weaker than the weak topology (see \cite[Proposition 4.16]{kmt}). If $E$ is dispersed in such a lattice, it is weakly dispersed, and so the weak topology has to be stronger than the norm topology on $E$, from where $\dim E<\8$.

\begin{corollary}[\cite{gmm}]\label{doc}In a discrete order continuous Banach lattice dispersed subspaces are finite-dimensional.
\end{corollary}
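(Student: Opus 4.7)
The plan is to chain together three facts already assembled in the preceding discussion and then apply the classical characterization of finite-dimensionality via the weak topology. First I would observe, as remarked just before the corollary, that by the Kadec-Pelczynski theorem \cite[Theorem 3.2]{dot} every un-null net in $\So_F$ admits an almost disjoint subsequence, so any dispersed subspace is automatically weakly dispersed. Applying Proposition \ref{ws} to $E$, the un-topology of $F$ coincides with the norm topology on $E$.

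Next I would invoke the standing hypothesis that $F$ is discrete and order continuous, which by \cite[Proposition 4.16]{kmt} guarantees that the un-topology on $F$ is coarser than the weak topology. Restricting to $E$, the weak topology is thus simultaneously finer than the un-topology, which by the previous step equals the norm topology on $E$, and coarser than the norm topology. Hence the weak and norm topologies agree on $E$.

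Finally, a Banach space on which the weak and norm topologies coincide is finite-dimensional: the norm-closed unit ball $\Bbo_E$ is weakly closed and, since the two topologies share the same compact sets, it is also norm-compact, so by Riesz's lemma $\dim E<\8$.

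I do not expect any genuine obstacle, since the statement is essentially a bookkeeping consequence of Proposition \ref{ws}, the Kadec-Pelczynski selection principle, and the relationship between un- and weak topologies on discrete order continuous lattices established in \cite{kmt}; all of these have already been stated or recalled in the text immediately preceding the corollary.
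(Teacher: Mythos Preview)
Your approach is exactly the one the paper uses in the paragraph immediately preceding the corollary: dispersed $\Rightarrow$ weakly dispersed via Kadec--Pelczynski, then Proposition~\ref{ws} gives un $=$ norm on $E$, then \cite[Proposition 4.16]{kmt} sandwiches the weak topology between un and norm on $E$, forcing weak $=$ norm and hence $\dim E<\infty$.

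One small issue: your justification of the final implication is not quite right. From weak $=$ norm on $E$ you claim that $\Bbo_E$ is norm-compact ``since the two topologies share the same compact sets'', but nothing you have said establishes that $\Bbo_E$ is \emph{weakly} compact to begin with (that would require reflexivity, which is what you are ultimately trying to conclude in a strong form). The standard argument is different: if a weak neighborhood of $0_E$ is contained in $\Bo_E$, it contains a finite-codimensional subspace $\bigcap_{i=1}^{n}\ker\nu_i$, and the only subspace contained in $\Bo_E$ is $\{0_E\}$, so $\dim E\le n$. Alternatively, just cite the classical fact that the weak and norm topologies coincide on a normed space only in finite dimensions.
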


While the weak disperseness is clearly the weakest of the introduced notions of disperseness, the strong disperseness is the strongest.

\begin{proposition}\label{str}If $E$ is strongly dispersed, then $E$ is strictly dispersed.
\end{proposition}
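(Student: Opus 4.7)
The plan is to extract strict disperseness from the quantitative form of complementarity provided by Theorem~\ref{compl}(v). By Proposition~\ref{ws}, strong disperseness of $E$ is equivalent to $\rho_E$ being complementary to the una-topology on $F$. The una-topology is a linear topology generated by the semi-norms $f\mapsto \|\lvert f\rvert \wedge h\|$ with $h\in F^a_+$, so Theorem~\ref{compl}(v) applies (with $\tau$ the una-topology and $\rho=\rho_E$) and produces a $\delta>0$ together with a una-neighborhood $U$ of $0_F$ such that $U\cap \{f\in \So_F : \rho_E(f)\le \delta\}=\varnothing$.

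Then I would argue that this same $\delta$ witnesses strict disperseness. Suppose, for contradiction, that there exists a disjoint sequence $\{f_n\}_{n\in\N}\subset \So_F$ with $\rho_E(f_n)<\delta$ for every $n$. As observed in the paragraph introducing the una-topology, every disjoint sequence in a Banach lattice is una-null; hence $f_n\to 0_F$ in the una-topology, and thus $f_n\in U$ for all sufficiently large $n$. On the other hand $f_n\in \{f\in \So_F : \rho_E(f)\le\delta\}$ for every $n$, which contradicts the choice of $U$. Therefore no such sequence exists, so $\delta$ witnesses that $E$ is strictly dispersed.

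I do not expect any real obstacle here: the only nontrivial ingredient is the una-nullity of disjoint sequences, which is recorded in the excerpt immediately before the definition of strong disperseness. Everything else is routine bookkeeping through the complementarity dictionary of Section~\ref{comple}, specifically the passage from the qualitative separation condition (i)/(ii) of Theorem~\ref{compl} to the uniform threshold in condition (v).
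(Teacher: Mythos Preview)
Your argument is correct. It is, in fact, exactly the route the paper itself takes later for the operator version, Proposition~\ref{sdns}(ii) (strongly DNS $\Rightarrow$ strictly DNS): extract the uniform threshold $\delta$ from condition~(v) of Theorem~\ref{compl}, and then use that every disjoint normalized sequence is una-null to force a contradiction. The paper explicitly flags this just before its own proof of Proposition~\ref{str}: ``We will later prove a more general fact (see part (ii) of Proposition~\ref{sdns}), but let us prove the proposition separately and using a different method.''

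The dedicated proof in the paper is quite different and more structural. It fixes $h\in F^a_+$ witnessing the strong disperseness, passes to an AL norm $\|\cdot\|_1$ on the order-continuous ideal generated by $h$ (via Amemiya's theorem), then uses Lemma~\ref{six} to convert a hypothetical disjoint sequence at distance $\varepsilon$ from $E$ into a sequence $\{e_n\}\subset\So_E$ with small pairwise meets, and finally runs a telescoping inequality in the AL norm to obtain an explicit lower bound $\varepsilon\ge \sup_n \frac{n\delta_1-\|h\|_1}{3n(n+1)}>0$. What that approach buys is a concrete quantitative estimate depending only on $h$, and independence from the general complementarity machinery of Section~\ref{comple}. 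What your approach buys is brevity and a direct passage through the abstract framework; it also makes transparent that Proposition~\ref{str} is just the $T=Q_E$ case of Proposition~\ref{sdns}(ii).
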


We will later prove a more general fact (see part (ii) of Proposition \ref{sdns}), but let us prove the proposition separately and using a different method. Let us start with a lemma.

\begin{lemma}\label{six}
Let $E$ be a subspace of $F$ such that there is a normalized disjoint sequence at the distance at most $\varepsilon$ from $E$. Then, there is $\left\{e_{n}\right\}_{n\in\N}\subset\So_{E}$ such that $\|\left|e_{m}\right|\wedge \left|e_{n}\right|\|\le 6\varepsilon$, for distinct $m,n\in\N$.
\end{lemma}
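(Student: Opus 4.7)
The plan is to produce the $e_n$ by picking close approximants of $f_n$ inside $E$, normalising them, and then using the non-expansiveness of the lattice operations to transfer the disjointness of $\left\{f_n\right\}$ into near-disjointness of $\left\{e_n\right\}$. Concretely, I use the hypothesis $\rho_E(f_n)\le\varepsilon$ to pick, for each $n$, some $e_n'\in E$ with $\|f_n-e_n'\|\le\varepsilon+\delta_n$, where $\delta_n\to 0$ (the infimum need not be attained). I may assume $\varepsilon<\tfrac12$, since otherwise $6\varepsilon\ge 1\ge \||e_m|\wedge|e_n|\|$ holds automatically for any normalised pair in $E$ (and if $E=\{0\}$ there is nothing to prove, since the hypothesis cannot be satisfied once $\varepsilon<1$). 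Under this assumption, the reverse triangle inequality gives $\|e_n'\|\ge 1-\varepsilon-\delta_n>0$ for all sufficiently large $n$, so $e_n:=e_n'/\|e_n'\|$ is a well-defined element of $\So_E$. Applying Lemma \ref{sph} with $e=f_n\in\So_F$ and $f=e_n'$ yields $\|f_n-e_n\|\le 2\|f_n-e_n'\|\le 2\varepsilon+2\delta_n$.

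Next I invoke the standard Banach-lattice inequalities $\bigl\||x|-|y|\bigr\|\le\|x-y\|$ and $\|x\wedge z-y\wedge z\|\le\|x-y\|$. Chaining them through an intermediate element,
\[
\bigl\||e_m|\wedge|e_n|-|f_m|\wedge|f_n|\bigr\|\le \bigl\||e_m|-|f_m|\bigr\|+\bigl\||e_n|-|f_n|\bigr\|\le \|e_m-f_m\|+\|e_n-f_n\|.
\]
Because $\left\{f_n\right\}$ is disjoint, $|f_m|\wedge|f_n|=0$ for $m\ne n$, and the previous step gives
\[
\bigl\||e_m|\wedge|e_n|\bigr\|\le (2\varepsilon+2\delta_m)+(2\varepsilon+2\delta_n).
\]

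Finally, once $n$ is taken large enough that $\delta_n\le\varepsilon/2$ (possible since $\delta_n\to 0$), the right-hand side is bounded by $6\varepsilon$ for all distinct $m,n$ in the tail; reindexing the tail produces the required sequence in $\So_E$. There is no real obstacle in this proof: the argument is essentially the combination of Lemma \ref{sph} with the two non-expansiveness inequalities for $|\cdot|$ and $\wedge$, and the constant $6\varepsilon$ comfortably absorbs the error introduced by the non-attainment of the distance (the sharp bound coming out of the calculation is in fact $\approx 4\varepsilon$).
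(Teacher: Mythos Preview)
Your proof is correct and follows the same strategy as the paper's: use Lemma~\ref{sph} to produce $e_n\in\So_E$ with $\|f_n-e_n\|\le 2\varepsilon$ (up to an arbitrarily small error, which you track more carefully than the paper does), and then transfer the disjointness of the $f_n$ to a bound on $\||e_m|\wedge|e_n|\|$. The only difference is in the final lattice estimate: you use the non-expansiveness of $|\cdot|$ and $\wedge$ to get essentially $4\varepsilon$, whereas the paper expands $(|f_m|+|f_m-e_m|)\wedge(|f_n|+|f_n-e_n|)$ into three surviving terms of norm at most $2\varepsilon$ each, which is where the constant $6$ comes from.
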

\begin{proof}
Let $\left\{f_{n}\right\}_{n\in\N}\subset\So_{F}$ be a disjoint sequence at the distance less than $\varepsilon$ from $E$. From Lemma \ref{sph} for every $n\in\N$ there is $e_{n}\in \So_{E}$ such that $\|f_{n}-e_{n}\|\le 2\varepsilon$. Then for distinct $m,n\in\N$ we have
\begin{align*}\left|e_{m}\right|\wedge \left|e_{n}\right|&\le \left(\left|f_{m}\right|+\left|f_{m}-e_{m}\right|\right)\wedge \left(\left|f_{n}\right|+\left|f_{n}-e_{n}\right|\right)
\\&\le \left|f_{m}\right|\wedge \left|f_{n}\right|+\left|f_{m}\right|\wedge \left|f_{n}-e_{n}\right|+\left|f_{m}-e_{m}\right|\wedge \left|f_{n}\right|+\left|f_{m}-e_{m}\right|\wedge \left|f_{n}-e_{n}\right|\\&\le \left|f_{m}-e_{m}\right|+\left|f_{n}-e_{n}\right|+\left|f_{m}-e_{m}\right|\wedge \left|f_{n}-e_{n}\right|.\end{align*}
The norm of each of the summands in the left-hand side is at most $2\varepsilon$, from where the result follows.
\end{proof}

\begin{proof}[Proof of Proposition \ref{str}]
Since the una-topology coincides with the norm topology on $E$, there are $h\in F^{a}_{+}$ and $\delta>0$ be such that $\|h\wedge\left|e\right|\|\ge \delta$, for every $e\in \So_{E}$. Since the ideal generated by $h$ is order continuous and has a strong unit, there is an AL norm $\|\cdot\|_{1}\le \|\cdot\|$ on it (see \cite[Theorem 2.7.8]{mn}). Moreover, from Amemiya's theorem this norm induces the same topology on $\left[0,h\right]$ (see \cite[Theorem 2.4.8]{mn}), and so there is $\delta_{1}>0$ such that $\|h\wedge\left|e\right|_{1}\|\ge \delta_{1}$, for every $e\in \So_{E}$.

Let $\varepsilon>0$ be such that there is a normalized disjoint sequence at the distance at most $\varepsilon$ from $E$. Our goal is to get a uniform lower bound for $\varepsilon$. From Lemma \ref{six} there is $\left\{e_{n}\right\}_{n\in\N}\subset\So_{E}$ such that $\|\left|e_{m}\right|\wedge \left|e_{n}\right|\|\le 6\varepsilon$, for distinct $m,n\in\N$. For $n\in\N$ denote $h_{n}=\left|e_{n}\right|\wedge h$; we have $\|h_{n}\|_{1}\ge\delta_{1}$, while for distinct $m,n\in\N$ we have $\|h_{m}\wedge h_{n}\|_{1}\le\|h_{m}\wedge h_{n}\|\le\|\left|e_{m}\right|\wedge \left|e_{n}\right|\|\le 6\varepsilon$. Note that $$\bigvee\limits_{k=1}^{n+1}h_{k}-\bigvee\limits_{k=1}^{n}h_{k}=h_{n+1}-h_{n+1}\wedge\bigvee\limits_{k=1}^{n}h_{k}\ge h_{n+1}-\sum\limits_{k=1}^{n}h_{n+1}\wedge h_{k},$$ from where, since $\|\cdot\|_{1}$ is an AL norm, we get $$\left\|\bigvee\limits_{k=1}^{n+1}h_{k}\right\|_{1}-\left\|\bigvee\limits_{k=1}^{n}h_{k}\right\|_{1}\ge \left\|h_{n+1}\right\|_{1}-\sum\limits_{k=1}^{n}\left\|h_{n+1}\wedge h_{k}\right\|_{1}\ge \delta_{1}-6n\varepsilon.$$ Therefore, recursively we can show that $\|h\|_{1}\ge \left\|\bigvee\limits_{k=1}^{n}h_{k}\right\|_{1}\ge n\delta_{1} - 3\varepsilon n\left(n+1\right)$. Thus, $\varepsilon\ge \frac{n\delta_{1}-\|h\|_{1}}{3n\left(n+1\right)}$, for every $n\in\N$, and so $\varepsilon\ge \sup\limits_{n\in\N}\frac{n\delta-\|h\|_{1}}{3n\left(n+1\right)}>0$, where the last quantity depends only on $h$.
\end{proof}

Let us summarize: strongly dispersed, strictly dispersed, dispersed and weakly dispersed subspaces of $F$ form collections, listed in the order of inclusion. But in an order continuous Banach lattice the notions of strong and weak disperseness coincide, and so all four notions are the same, which is the main result of this section.

\begin{theorem}\label{disp}
In an order continuous Banach lattice the notions of strong, strict and weak disperseness coincide with the dispreseness. In particular, the set of dispersed subspaces of an order continuous Banach lattice is gap-open.
\end{theorem}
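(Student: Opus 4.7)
The plan is simply to close the chain of implications that has been assembled over the preceding pages. Four notions are in play: strongly dispersed $\subset$ strictly dispersed $\subset$ dispersed $\subset$ weakly dispersed, where the first containment is Proposition \ref{str}, the second is immediate from the definitions (a normalized disjoint sequence at distance less than $\delta$ from $E$ witnesses both the existence of \emph{some} almost disjoint sequence in $E$), and the third follows from the Kadec--Pelczynski-type selection principle cited right after Proposition \ref{ws}: any un-null normalized net in $\So_E$ would yield an almost disjoint sequence, so the absence of the latter forces the former. Therefore it will suffice to prove, under the standing assumption that $F$ is order continuous, the one remaining implication: weakly dispersed $\Rightarrow$ strongly dispersed.

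This implication is essentially tautological in the order continuous setting and is the engine of the theorem. I would argue as follows: by definition the un-topology is generated by seminorms $f \mapsto \||f| \wedge h\|$ with $h \in F_+$, while the una-topology uses the smaller set of generators $h \in F^a_+$. Since $F$ is order continuous, $F^a = F$, so the two generating families agree and hence the un- and una-topologies coincide. Consequently $\So_E$ is un-separated from $0_F$ iff it is una-separated from $0_F$, which is exactly the equality of the weakly dispersed and strongly dispersed notions. Combined with the chain above, all four notions collapse into one.

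For the ``in particular'' clause, I would invoke Proposition \ref{ws}: the set of strongly dispersed subspaces is gap-open (this is the special case of Corollary \ref{open} for the una-topology). Since under order continuity the strongly dispersed subspaces are precisely the dispersed subspaces, the collection of dispersed subspaces is gap-open as well.

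I do not anticipate any genuine obstacle at this stage. All of the nontrivial content, namely the quantitative passage from ``some normalized disjoint sequence is close to $E$'' to a bound in terms of the AL-renorming of an order continuous principal ideal, was already carried out in Proposition \ref{str} via Lemma \ref{six}. The remaining work for Theorem \ref{disp} is the clean bookkeeping step of pointing out that un- and una-topologies agree under $F^a = F$, and then invoking the gap-openness already established for the strong notion.
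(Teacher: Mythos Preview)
Your proposal is correct and mirrors the paper's own argument essentially verbatim: the paper does not give a separate proof environment for this theorem but simply summarizes, in the paragraph immediately preceding it, the chain strongly $\Rightarrow$ strictly $\Rightarrow$ dispersed $\Rightarrow$ weakly (via Proposition~\ref{str}, the definitions, and Kadec--Pelczynski respectively), then closes the loop by observing that $F^{a}=F$ collapses the un- and una-topologies, hence weak and strong disperseness. The gap-openness clause is likewise obtained exactly as you indicate, from Proposition~\ref{ws}.
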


Let us turn to dispersed subspaces in non-order continuous Banach lattices. As was mentioned above, not every weakly dispersed subspace is dispersed; let us show that the implication strongly dispersed$\Rightarrow$strictly dispersed is also proper.

\begin{example}\label{rad}
It is not hard to show that if $F=L_{\8}\left[0,1\right]$, then $F^{a}=\left\{0_{F}\right\}$, and so the una-topology is trivial. Hence, there are no strongly dispersed subspaces of $F$. However, we will show that $F$ contains a strictly dispersed subspace.

First, observe that if $f,g\in F$ are independent as random variables, and symmetrically distributed (in the sense that the distribution of $f$ and $-f$ is the same), then $\|f+g\|=\|f\|+\|g\|$, as for every $\varepsilon>0$ the set $f^{-1}\left(\|f\|-\varepsilon, \|f\|\right)\cap g^{-1}\left(\|g\|-\varepsilon, \|g\|\right)$ is non-negligible. In a similar way one can extend this fact for an arbitrary finite collection of independent functions. Moreover, if $h$ is independent of the vector $\left(f,g\right)$ and symmetrically distributed, then $\|\left|f\right|\wedge \left|g+h\right|\|\ge \|f\|\wedge \|h\|$, since if e.g. $f^{-1}\left(\|f\|-\varepsilon, \|f\|\right)\cap g^{-1}\left[0,+\8\right)$ is non-negligible, then it has a non-negligible intersection with the set $h^{-1}\left(\|h\|-\varepsilon, \|h\|\right)$.\medskip

Let $\left\{r_{n}\right\}_{n\in\N}$ be the sequence of Rademacher functions, i.e. independent random variables which attain values $\pm 1$ with probability $\frac{1}{2}$. Assume that there is a disjoint sequence at the distance $\varepsilon<\frac{1}{18}$ from $E=\overline{\spa}\left\{r_{n}\right\}_{n\in\N}$. Then, from Lemma \ref{six} there is a normalized sequence $\left\{e_{n}\right\}_{n\in\N}\subset\spa\left\{r_{n}\right\}_{n\in\N}$ such that $\|\left|e_{n}\right|\wedge \left|e_{m}\right|\|\le 6\varepsilon$.\medskip

For every $n\in\N$ let $A_{n}=A_{n}^{+}\sqcup A_{n}^{-}\subset \N$ be such that there are $\left\{a_{in}\right\}_{n\in\N,~ i\in A_{n}}\subset\left(0,1\right]$ such that $e_{n}=\sum\limits_{i\in A_{n}^{+}}a_{in}r_{i}-\sum\limits_{i\in A^{-}_{n}}a_{in}r_{i}$. Note that $A_{n}\cap A_{m}\ne\varnothing$, since otherwise $e_{n}$ and $e_{m}$ would be independent, and so $\|\left|e_{n}\right|\wedge \left|e_{m}\right|\|=1$. Since $A_{1}$ is finite, there are $m,n\in\N$ such that $A_{1}\cap A^{\pm}_{n}=A_{1}\cap A^{\pm}_{m}$.  Then, $e_{m}= \sum\limits_{i\in A_{m}^{+}\cap A_{1}}a_{im}r_{i}-\sum\limits_{i\in A^{-}_{m}\cap A_{1}}a_{im}r_{i}+h_{m}$, where $h_{m}$ is independent of $\left\{r_{i}\right\}_{i\in A_{1}\cap A_{m}}$. Hence, $6\varepsilon> \|\left|e_{1}\right|\wedge \left|e_{m}\right|\|\ge \|e_{1}\|\wedge \|h_{m}\|=1\wedge \|h_{m}\|$, and so $\|h_{m}\|<6\varepsilon$. Therefore, since $1=\|e_{m}\|=\sum\limits_{i\in A_{1}\cap A_{m}} a_{im}+\|h_{m}\|$, it follows that $\sum\limits_{i\in A_{1}\cap A_{m}} a_{im}>1-6\varepsilon$. Hence, on $\bigcap\limits_{i\in A_{1}\cap A_{m}^{\pm}}r_{i}^{-1}\left(\pm1\right)$ the value of $e_{m}$ is at least $\sum\limits_{i\in A_{1}\cap A_{m}} a_{im}-\|h_{m}\|>1-12\varepsilon$, and similarly, the value of $e_{n}$ on this set is also at least $1-12\varepsilon$. Thus, $1-12\varepsilon\le\|\left|e_{n}\right|\wedge \left|e_{m}\right|\| <6\varepsilon$. Contradiction.
\qed\end{example}

\begin{question} Is the set of dispersed subspaces of $F$ always gap-open?
\end{question}

Some evidence in support of the hypothesis is the characterization from Proposition \ref{sumcl} below and results such as \cite[Chapter IV, Theorem 4.30]{kato}. Moreover, it is plausible that every dispersed subspace is strictly dispersed. A competing hypothesis may be that the set of strictly dispersed subspaces is the gap interior of the set of the dispersed subspaces, and the two notions coincide if and only if $F$ is order-continuous.

\begin{lemma}\label{quas} Let $\left\{f_{n}\right\}_{n\in\N}\subset \So_{F}$ be quasi-disjoint, i.e. there is a disjoint sequence $\left\{e_{n}\right\}_{n\in\N}\subset \So_{F}$ such that $\sum\limits_{n\in\N}\|f_{n}-e_{n}\|<1$. Then:
\item[(i)] For every $\varepsilon>0$ there is $m\in\N$ and an isomorphism $T\in\Lo\left(F\right)$ such that $\|T\|,\|T^{-1}\|<1+\varepsilon$, $Id_{F}-T$ is compact and $Te_{n}=f_{n}$, for every $n\ge m$.
\item[(ii)] If $\left\{h_{i}\right\}_{i\in I}$ is a normalized weakly null net in $\overline{\spa}\left\{f_{n}\right\}_{n\in\N}$, then it contains an almost disjoint sequence $\left\{h_{i_{k}}\right\}_{k\in\N}$. If additionally $\left\{h_{i}\right\}_{i\in I}$ is null with respect to a semi-norm $\rho$, then $\left\{h_{i_{k}}\right\}_{k\in\N}$ can also be selected $\rho$-null.
\end{lemma}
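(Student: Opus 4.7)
The plan is to prove (i) by a small perturbation of the identity built from Hahn--Banach extensions of the biorthogonal functionals of $\{e_n\}_{n\in\N}$, and to deduce (ii) essentially from Theorem \ref{basic}(iii) applied to that same disjoint sequence.

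For (i), the starting observation is that a normalized disjoint sequence in a Banach lattice is $1$-unconditional on its closed span: disjointness yields $|a_n|=\||a_n|e_n\|\le \|\sum_k a_k e_k\|$ for all scalars, so the coordinate functionals $\nu_n$ on $\overline{\spa}\{e_k\}_{k\in\N}$ have norm at most $1$. Extend each $\nu_n$ to a norm-at-most-one functional on $F$ by Hahn--Banach. Given $\varepsilon>0$, pick $m$ with $a:=\sum_{n\ge m}\|f_n-e_n\|<\varepsilon/(1+\varepsilon)$, and set
$$T f := f + \sum_{n\ge m}\nu_n(f)\bigl(f_n - e_n\bigr).$$
The series converges in operator norm, and $\|Id_F - T\|\le a$, so $T$ is invertible with $\|T\|,\|T^{-1}\|<1+\varepsilon$. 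For $n\ge m$ the identity $\nu_k(e_n)=\delta_{kn}$ gives $T e_n = e_n + (f_n-e_n)=f_n$. Finally, $Id_F - T$ is a norm limit of finite-rank operators, hence compact.

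For (ii), apply Theorem \ref{basic}(iii) with the disjoint (hence $1$-unconditional basic) sequence $\{e_n\}_{n\in\N}$ in the role of the basic sequence (with basis constant $r=1$) and $\{f_n\}_{n\in\N}$ in the role of $\{g_n\}_{n\in\N}$; the quasi-disjointness hypothesis $\sum\|f_n-e_n\|<1=1/r$ is exactly the perturbation bound required, and by assumption $\{h_i\}_{i\in I}\subset\overline{\spa}\{f_n\}_{n\in\N}$ is normalized and weakly null. The theorem then produces a basic subsequence $\{h_{i_k}\}_{k\in\N}$ and increasing indices $\{m_k\}_{k\in\N}$ with $d(h_{i_k},\spa\{e_n\}_{n=m_k+1}^{m_{k+1}})\to 0$; the analogous $\rho$-null conclusion follows from the last sentence of Theorem \ref{basic}(iii). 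Pick $p_k$ in the appropriate block realizing the distance (up to a factor of $2$); the blocks lie in pairwise disjoint bands of $F$, so $\{p_k\}_{k\in\N}$ is a disjoint sequence, and since $\|h_{i_k}\|=1$ one has $\|p_k\|\to 1$, so after normalizing one obtains a normalized disjoint sequence at vanishing distance from $\{h_{i_k}\}_{k\in\N}$, witnessing that the latter is almost disjoint.

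The only technical nuance is the Hahn--Banach step in (i): the estimate $\|Id_F - T\|\le a$ hinges on the bound $\|\nu_n\|\le 1$, which in turn depends on the $1$-unconditionality of disjoint normalized sequences; without this observation the perturbation series would not close with the exact constant $1$ appearing in the definition of quasi-disjointness. Once this is in hand, both parts reduce to direct bookkeeping with the general basic-sequence machinery gathered in Theorem \ref{basic}.
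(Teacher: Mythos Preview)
Your proof is correct and follows the same route as the paper. The paper simply cites Theorem \ref{basic}(ii) (together with the observation that a normalized disjoint sequence is basic with constant $1$) for part (i), and Theorem \ref{basic}(iii) for part (ii); you have written out the same argument in more detail, making explicit the Hahn--Banach extension of the coordinate functionals, the choice of the tail index $m$, and the disjointness of the block vectors $p_k$.
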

\begin{proof}
(i) follows from part (ii) of Theorem \ref{basic} and the fact that a disjoint sequence is always basic with basis constant $1$.

(ii): From part (iii) of Theorem \ref{basic} there is a ($\rho$-null) subsequence $\left\{h_{i_{k}}\right\}_{k\in\N}$ and an increasing sequence of indices $\left\{m_{k}\right\}_{k\in\N}$, such that $\|h_{i_{k}}-g_{k}\|\xrightarrow[k\to\8]{}0$, where $g_{k}\in \spa\left\{e_{n}\right\}_{n=m_{k}+1}^{m_{k+1}}$, for every $k\in\N$. But since $\left\{e_{n}\right\}_{n=m_{k}+1}^{m_{k+1}}\perp \left\{e_{n}\right\}_{n=m_{l}+1}^{m_{l+1}}$, for $k<l$, it follows that $\left\{g_{k}\right\}_{k\in\N}$ is disjoint, and so $\left\{h_{i_{k}}\right\}_{k\in\N}$ is almost disjoint.
\end{proof}

The next result follows from part (i) Lemma \ref{quas} and the fact that a closed span of a disjoint sequence in $l_{\8}$ is isometrically isomorphic to $c_{0}$.

\begin{proposition}\label{wsd} If $E\subset l_{\8}$ is not dispersed then it contains an isomorphic copy of $c_0$ which is arbitrary close to being an isometric copy.
\end{proposition}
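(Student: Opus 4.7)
The plan is to extract from $E$ an almost disjoint sequence, then use Lemma \ref{quas}(i) to transport a disjoint normalized sequence of $l_{\8}$ onto a tail of it via a near-isometry of $l_{\8}$, and finally invoke the standard identification of the closed span of a normalized disjoint sequence in $l_{\8}$ with $c_{0}$.

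Concretely, fix $\varepsilon>0$. Since $E$ is not dispersed, there exist a sequence $\{e_{n}\}_{n\in\N}\subset\So_{E}$ and a disjoint $\{f_{n}\}_{n\in\N}\subset l_{\8}$ with $\|f_{n}-e_{n}\|\xrightarrow[n\to\8]{}0$. From $\|e_{n}\|=1$ we get $\|f_{n}\|\to 1$, so after replacing each $f_{n}$ by $f_{n}/\|f_{n}\|$ (still disjoint and now in $\So_{l_{\8}}$) we still have $\|f_{n}-e_{n}\|\to 0$. Passing to a sufficiently sparse subsequence I may assume $\sum_{n\in\N}\|f_{n}-e_{n}\|$ is arbitrarily small, which in particular makes $\{e_{n}\}_{n\in\N}$ quasi-disjoint in the sense of Lemma \ref{quas}.

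Applying Lemma \ref{quas}(i) (with $\{e_{n}\}$ in the role of the quasi-disjoint sequence and $\{f_{n}\}$ in the role of the disjoint witness) yields $m\in\N$ and an isomorphism $T\in\Lo(l_{\8})$ with $\|T\|,\|T^{-1}\|<1+\varepsilon$ such that $Tf_{n}=e_{n}$ for every $n\ge m$. The restriction of $T$ to $G:=\overline{\spa}\{f_{n}\}_{n\ge m}$ is then an isomorphism onto $\overline{\spa}\{e_{n}\}_{n\ge m}\subset E$ whose Banach--Mazur distortion is at most $(1+\varepsilon)^{2}$. Since the finite combinations of a disjoint normalized sequence in $l_{\8}$ satisfy $\|\sum a_{n}f_{n}\|_{\8}=\max_{n}|a_{n}|$, a standard Cauchy-sequence argument identifies $G$ isometrically with $c_{0}$: the coefficients of any element in $G$ must tend to zero, since the partial sums in $l_{\8}$ are Cauchy iff the tail coefficients vanish. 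Choosing $\varepsilon$ so that $(1+\varepsilon)^{2}$ is below any prescribed tolerance finishes the argument.

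The only thing demanding attention is the bookkeeping: one must renormalize the witnessing disjoint sequence and then pass to a sufficiently sparse subsequence in order to enter the hypothesis of Lemma \ref{quas}(i). Once that is set up, the conclusion is immediate from that lemma together with the $c_{0}$-identification, and I expect no serious obstacle.
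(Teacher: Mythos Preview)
Your argument is correct and follows exactly the route indicated in the paper: extract an almost disjoint sequence from $E$, normalize the disjoint witness and thin to make the sequence quasi-disjoint, apply Lemma~\ref{quas}(i) to obtain a near-isometry $T$ of $l_{\8}$ sending a tail of the disjoint sequence onto a tail of the sequence in $E$, and use that the closed span of a normalized disjoint sequence in $l_{\8}$ is isometric to $c_{0}$. The only detail you add beyond the paper's one-line justification is the explicit bookkeeping (renormalization and passing to a subsequence), which is handled correctly.
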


\begin{example}
Consider a Rademacher type sequence $\left\{g_{n}\right\}_{n\in\N}\subset F=l_{\8}$, where $g_{1}=\left(1,-1,1,-1,...\right)$, $g_{2}=\left(1,1,-1,-1,1,1,...\right)$ and so on. Similarly to Example \ref{rad} one can show that  $E=\overline{\spa}\left\{g_{n}\right\}_{n\in\N}$ is strictly dispersed in $F$. Note that unlike $L_{\8}\left[0,1\right]$, the una-topology on $l_{\8}$ is Hausdorff. In fact, it is the topology of the coordinate-wise convergence. Hence, every dispersed subspace of $c_{0}=F^{a}$ is strongly dispersed. On the other hand, $E$ is not strongly dispersed, since $\left\{h_{m}\right\}_{m\in\N}\subset\So_{E}$ is null coordinate-wise, where $h_{m}=\frac{1}{m}\left(g_{1}-g_{2}+...+\left(-1\right)^{m+1}g_{m}\right)$. Indeed, for every $n$, the $n$-th coordinate of $g_{m}$ is $1$ as soon as $2^{m}\ge n$, and so the $n$-th coordinate of $h_{m}$ is either $\frac{k}{m}$, or $\frac{k+1}{m}$, where $k\in\Z$ does not depend on $m$.
\qed\end{example}\medskip

Let us call a subspace $E$ of $F$ \emph{anti-dispersed} if it contains no infinitely dimensional dispersed sub-subspaces, i.e. every infinitely dimensional sub-subspace of $E$ contains an almost disjoint sequence. Every subspace of an anti-dispersed subspace is trivially anti-disperse. Since every dispersed subspace has to be weakly dispersed, we get the following result.

\begin{proposition}\label{weak}If $E$ is such that the weak topology is stronger than the un-topology on $E$, then $E$ is anti-dispersed.
\end{proposition}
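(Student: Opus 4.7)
The plan is to proceed by contradiction: suppose $E$ is not anti-dispersed, so it contains an infinite-dimensional closed subspace $G$ which is dispersed. The strategy is to chain together three topological equalities on $G$ to force the weak and norm topologies to coincide on the infinite-dimensional space $G$, which is impossible.

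First I would invoke the Kadec-Pełczyński-type selection result cited just after Proposition \ref{ws}: since any un-null net on $\So_{F}$ contains an almost disjoint sequence, every dispersed subspace is weakly dispersed. Applying this to $G$, I obtain that $G$ is weakly dispersed. By Proposition \ref{ws}(ii) this means that the un-topology and the norm topology coincide on $G$.

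Next I would translate the hypothesis to $G$. The hypothesis that the weak topology on $E$ is stronger than the un-topology on $E$ passes to the subspace $G\subset E$: the trace of the weak topology on $G$ (which by Hahn--Banach agrees with the intrinsic weak topology of $G$) dominates the trace of the un-topology. Combined with the previous step, this gives that the weak topology on $G$ is stronger than the norm topology on $G$. Since the weak topology is always weaker than the norm topology, the two coincide on $G$.

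Finally, an infinite-dimensional Banach space never admits the coincidence of its weak and norm topologies (for instance, in such a space the unit sphere is weakly dense in the unit ball, contradicting the norm topology having $\So_{G}$ as a closed set). This contradicts the choice of $G$ infinite-dimensional, and so $E$ must be anti-dispersed.

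There is no serious obstacle here; the argument is a direct composition of Proposition \ref{ws} with the hypothesis and the Kadec--Pełczyński-based implication (dispersed $\Rightarrow$ weakly dispersed) already recorded in this section. The only point that requires a moment's care is ensuring that the hypothesis on $E$ descends to $G$, which is immediate because both the weak and the un-topologies on $E$ restrict to the corresponding topologies on the closed subspace $G$.
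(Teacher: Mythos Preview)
Your argument is correct and is exactly the approach the paper takes: the one-sentence justification preceding the proposition (``every dispersed subspace has to be weakly dispersed'') is precisely the Kadec--Pe\l czy\'nski step you invoke, after which the conclusion follows from Proposition~\ref{ws}(ii) and the elementary fact that the weak and norm topologies coincide only on finite-dimensional spaces. You have simply unpacked the paper's terse remark into its constituent steps.
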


It is easy to see that a closed span of a disjoint sequence is anti-dispersed. However, from part (ii) of Lemma \ref{quas} this remains true for a sequence which is sufficiently close to being disjoint.

\begin{proposition}\label{anti} If $\left\{f_{n}\right\}_{n\in\N}\subset \So_{F}$ is quasi-disjoint, then $\overline{\spa}\left\{f_{n}\right\}_{n\in\N}$ is anti-dispersed.
\end{proposition}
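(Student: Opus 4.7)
The plan is a direct application of part (ii) of Lemma \ref{quas}. Set $E=\overline{\spa}\left\{f_{n}\right\}_{n\in\N}$, and let $G$ be an arbitrary infinite-dimensional closed subspace of $E$; I must exhibit an almost disjoint sequence lying in $G$.

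First I would produce a normalized weakly null net in $G$. Since $G$ is infinite-dimensional, the weak topology on $G$ is strictly weaker than the norm topology (any weak neighbourhood of $0_{G}$ contains an infinite-dimensional linear subspace and hence is not norm-bounded). Applying Corollary \ref{eq} to $G$ with $\tau$ equal to the weak topology, $0_{G}$ is not weakly separated from $\So_{G}$, so there exists a net $\left\{h_{i}\right\}_{i\in I}\subset \So_{G}$ that is weakly null in $G$. By Hahn--Banach, the weak topology of $G$ coincides with the restriction of the weak topology of $F$, so $\left\{h_{i}\right\}_{i\in I}$ is a normalized weakly null net in $F$ lying inside $E$.

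Then I would apply part (ii) of Lemma \ref{quas} directly: because $\left\{h_{i}\right\}_{i\in I}$ is a normalized weakly null net in $\overline{\spa}\left\{f_{n}\right\}_{n\in\N}$, it contains an almost disjoint subsequence $\left\{h_{i_{k}}\right\}_{k\in\N}$. Every $h_{i_{k}}$ still belongs to $G$, so $G$ contains an almost disjoint sequence. Since $G$ was arbitrary, $E$ is anti-dispersed.

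There is no serious obstacle; the only ingredient beyond Lemma \ref{quas}(ii) is the existence of a weakly null net on the sphere of any infinite-dimensional normed space, which is a routine consequence of the strict weakness of the weak topology in infinite dimensions, packaged conveniently through Corollary \ref{eq}. Note that one cannot in general replace ``net'' by ``sequence'' here (for instance if $\left\{f_{n}\right\}$ spans a copy of $\ell_{1}$), which is precisely why Lemma \ref{quas}(ii) was stated for nets.
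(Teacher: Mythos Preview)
Your proposal is correct and follows exactly the approach the paper indicates: the paper does not give a detailed proof but simply states that the result ``follows from part (ii) of Lemma \ref{quas}'', and you have spelled out precisely the missing steps---producing a normalized weakly null net in an arbitrary infinite-dimensional subspace via Corollary \ref{eq}, then invoking Lemma \ref{quas}(ii). Your remark on why nets (rather than sequences) are needed is also apt.
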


Since every almost disjoint sequence contains a quasi-disjoint one, we get the following corollary.

\begin{corollary}\label{quasi}Every non-dispersed subspace contains an anti-dispersed sub-subspace.
\end{corollary}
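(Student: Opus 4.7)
The strategy is precisely the one telegraphed in the sentence preceding the statement: from a non-dispersed $E$ pull out an almost disjoint sequence, refine it to a quasi-disjoint one, and then quote Proposition \ref{anti}.

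Concretely, since $E$ is non-dispersed there are $\{e_n\}_{n\in\N}\subset \So_E$ and a disjoint $\{g_n\}_{n\in\N}\subset F$ with $\|e_n-g_n\|\xrightarrow[n\to\8]{} 0$. The definition of ``quasi-disjoint'' in the paper requires the disjoint witness to lie in $\So_F$, so the one mildly careful step is to renormalize. Since $\|g_n\|\to 1$, after discarding finitely many terms we may assume $g_n\ne 0_F$ and set $f_n:=g_n/\|g_n\|$; the sequence $\{f_n\}\subset \So_F$ is still disjoint, and Lemma \ref{sph} gives $\|e_n-f_n\|\le 2\|e_n-g_n\|\xrightarrow[n\to\8]{} 0$. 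Pass to a subsequence $\{n_k\}$ with $\|e_{n_k}-f_{n_k}\|<2^{-k-1}$, so that $\sum_k\|e_{n_k}-f_{n_k}\|<1$; this exhibits $\{e_{n_k}\}\subset \So_F$ as quasi-disjoint, with disjoint witness $\{f_{n_k}\}\subset\So_F$.

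Now apply Proposition \ref{anti} to conclude that $\overline{\spa}\{e_{n_k}\}_{k\in\N}$ is anti-dispersed. Since each $e_{n_k}$ lies in $E$ (and $E$ is closed), this is a closed sub-subspace of $E$; it is moreover infinite-dimensional, since a quasi-disjoint normalized sequence in $\So_F$ is basic (by Lemma \ref{quas}(i), or directly by Theorem \ref{basic}(ii) applied to the disjoint, hence basic, sequence $\{f_{n_k}\}$). The only real thing to be alert to is the renormalization of the disjoint witness, which is handled cleanly by Lemma \ref{sph}; everything else is just thinning out the sequence so the perturbation series converges.
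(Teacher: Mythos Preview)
Your proof is correct and is exactly the argument the paper intends: extract an almost disjoint sequence from the non-dispersed $E$, thin it to a quasi-disjoint one, and invoke Proposition~\ref{anti}. You are in fact more scrupulous than the paper, which compresses the passage from ``almost disjoint'' to ``quasi-disjoint'' into a single sentence without mentioning the renormalization of the disjoint witness via Lemma~\ref{sph}; that is the only place any care is needed, and you handle it correctly.
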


\begin{question}
Does Proposition \ref{anti} remain true for any almost disjoint sequence? Is every anti-dispersed subspace contained in a closed span of an (almost) disjoint sequence? Is the converse to Proposition \ref{weak} true?
\end{question}

\begin{question}
Is the set of anti-dispersed subspaces gap-closed? Equivalently, if a subspace $E$ contains a dispersed subspace $H$, is there a gap-neighborhood of $E$, such that every $G$ in this neighborhood contains a subspace which is sufficiently gap-close to $H$?
\end{question}

In regards to the last question, the answer is positive if $E$ is complemented (this follows from \cite[Theorem 5.2]{ostrov}). Not every dispersed subspace is complemented (combine \cite[Proposition 5.7.1 and Theorem 7.6]{ak}, and the fact from \cite{gmm} that a subspace is strongly embedded if and only if it is dispersed), but they are in some classes of subspaces (see \cite[Theorem 6.4.8]{ak}), which gives a reason to think that this property is somewhat close to being complemented.

\begin{question}
Can we find a topology such that dispersed or strictly dispersed subspaces are characterized similarly to Proposition \ref{ws}?
\end{question}

Note that we need this topology to simultaneously satisfy the following properties: every bounded disjoint sequence has to be null and every bounded null net has to contain an almost bounded sequence. We cannot expect this topology to be linear. Indeed, consider $F=l_{\8}=l_{\8}\left(\N\times \N\right)$, $e_{n}=\left\{\delta_{mn}\right\}_{m,k\in \N}$, $f_{n}=\left\{\delta_{nk}\right\}_{m,k\in \N}$ and $g_{n}=e_{n}+f_{n}$, for $n\in\N$. Then, $\left\{e_{n}\right\}_{n\in\N}$ and $\left\{f_{n}\right\}_{n\in\N}$ are disjoint, but $\|g_{n}\wedge g_{m}\|=1$ for any distinct $m,n\in\N$, and so from Lemma \ref{six} no disjoint sequence is at the distance less than $\frac{1}{6}$ from $\left\{g_{n}\right\}_{n\in\N}$.

\section{DNS and DSS operators}\label{dnsdss}

In this section we will focus on the two classes of operators acting from a Banach lattice into a Banach space, which are analogous to the upper semi-Fredholm and strictly singular operators. In a way, the un-topology, almost disjoint sequences and dispersed subspaces play similar roles in regards to the former classes of operators as the weak topology, normalized basic sequences and finitely dimensional subspaces play in regards to the latter. Everywhere in this section $F$ is again a Banach lattice, and $E$ is a Banach space.

An operator $T:F\to E$ is called \emph{disjointly non-singular (DNS)} operator if one of the following equivalent conditions is satisfied (equivalence of (i)-(vi) was established in \cite{gmm}):

\begin{theorem}\label{dns} For $T\in\Lo\left(F,E\right)$ the following conditions are equivalent:
\item[(i)] No (almost) disjoint sequence in $\So_{F}$ is null with respect to $\rho_{T}$;
\item[(ii)] If $\left\{f_{n}\right\}_{n\in\N}\subset  F$ is disjoint, then the restriction of $T$ is not compact on $\overline{\spa}\left\{f_{n}\right\}_{n\in\N}$;
\item[(iii)] If $\left\{f_{n}\right\}_{n\in\N}\subset  F$ is disjoint, then the restriction of $T$ is not SS on $\overline{\spa}\left\{f_{n}\right\}_{n\in\N}$;
\item[(iv)] If $\left\{f_{n}\right\}_{n\in\N}\subset F$ is disjoint, then the restriction of $T$ is USF on $\overline{\spa}\left\{f_{n}\right\}_{n\in\N}$;
\item[(v)] If $\left\{f_{n}\right\}_{n\in\N}\subset F$ is disjoint, then there is $m\in\N$ such that  $T$ is bounded from below on $\overline{\spa}\left\{f_{n}\right\}_{n\ge m}$;
\item[(vi)] $\Ker \left(T-S\right)$ is dispersed, for every compact $S:F\to E$, i.e. there is no compact operator that coincides with $T$ on a non-dispersed subspace;
\item[(vii)] A restriction of $T$ to any anti-dispersed subspace is USF;
\item[(viii)] No restriction of $T$ to a non-dispersed subspace is compact;
\item[(ix)] No restriction of $T$ to a non-dispersed subspace is SS.
\end{theorem}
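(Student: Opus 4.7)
The plan is to take the equivalence of (i)--(vi) as given by \cite{gmm} and insert conditions (vii)--(ix) into the chain. Several implications are immediate. By Proposition \ref{anti}, the closed span of a disjoint sequence in $\So_{F}$ is anti-dispersed, and being infinite-dimensional, non-dispersed; this yields (vii) $\Rightarrow$ (iv), (viii) $\Rightarrow$ (ii), and (ix) $\Rightarrow$ (iii). Compact operators being strictly singular gives (ix) $\Rightarrow$ (viii). Finally, (vii) $\Rightarrow$ (ix): if $T|_{H}$ were strictly singular for some non-dispersed $H$, Corollary \ref{quasi} would supply an anti-dispersed sub-subspace $H' \subset H$ on which $T$ is simultaneously strictly singular (as a restriction) and USF (by (vii))---a contradiction on an infinite-dimensional space.

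What remains is to close the loop back to (i), and I would do so through a single ``disjointification'' argument. For (i) $\Rightarrow$ (viii), suppose $T|_{H}$ is compact with $H$ non-dispersed; pick an almost disjoint $\{e_{k}\}_{k\in\N} \subset \So_{H}$ and a corresponding disjoint $\{f_{k}\}_{k\in\N} \subset F$ with $\|e_{k} - f_{k}\| \xrightarrow[k\to\8]{} 0$. Since $\{e_{k}\}$ is bounded, compactness of $T|_{H}$ lets us pass to a subsequence along which $\{Te_{k_{j}}\}$ is Cauchy; continuity of $T$ makes $\{Tf_{k_{j}}\}$ Cauchy as well. The consecutive differences $g_{j} = f_{k_{2j+1}} - f_{k_{2j}}$ remain pairwise disjoint in $F$, and from the lattice identity $|u - v| = |u| + |v|$ for disjoint $u, v$ one obtains $\|g_{j}\| \ge \|f_{k_{2j+1}}\| \vee \|f_{k_{2j}}\| \ge \tfrac{1}{2}$ for large $j$ (since $\|f_{k}\| \to 1$), while $\|Tg_{j}\| \to 0$. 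Hence $g_{j}/\|g_{j}\|$ is a normalized disjoint $\rho_{T}$-null sequence in $F$, contradicting (i).

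For (iv) $\Rightarrow$ (vii), suppose $H$ is anti-dispersed but $T|_{H}$ is not USF. Condition (ix) of Theorem \ref{usf} yields a normalized basic $\rho_{T}$-null sequence $\{h_{n}\}_{n\in\N} \subset H$. After passing to a subsequence with $\sum_{n\in\N}\|Th_{n}\| < \8$, part (i) of Theorem \ref{basic} makes $T$ compact on $G = \overline{\spa}\{h_{n}\}_{n\in\N}$. As a subspace of the anti-dispersed $H$, $G$ is itself anti-dispersed, hence non-dispersed (being infinite-dimensional), and applying the disjointification of the previous paragraph to $G$ in place of $H$ produces a normalized disjoint $\rho_{T}$-null sequence in $F$, contradicting (i). The main technical point throughout is this disjointification step: producing a disjoint sequence inside $F$ rather than merely inside $H$, and keeping $\rho_{T}$-nullity alive after normalization---a gap bridged precisely by the passage through the corresponding disjoint $\{f_{k}\}$ and by taking consecutive differences.
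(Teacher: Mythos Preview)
Your proof is correct, but organized differently from the paper's. You take the equivalence of (i)--(vi) as a black box from \cite{gmm} and attach (vii)--(ix) via your ``disjointification'' device; the paper instead rebuilds the whole cycle internally. Two points of comparison are worth noting.

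First, for the implication landing at (viii) the paper argues (v)$\Rightarrow$(viii) with a direct $\varepsilon$--$\delta$ estimate: if $T$ is bounded below by $\delta$ on $\overline{\spa}\{f_{n}\}_{n\ge m}$ and $\|f_{n}-h_{n}\|<\delta/3$, then $\|Th_{n}-Th_{k}\|\ge\delta/3$, contradicting relative compactness of $T\Bbo_{H}$. Your (i)$\Rightarrow$(viii) via Cauchy subsequences and consecutive differences $g_{j}=f_{k_{2j+1}}-f_{k_{2j}}$ is an equally valid and pleasantly elementary alternative that avoids invoking (v).

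Second, your route to (vii) is more circuitous than necessary. The paper proves (viii)$\Rightarrow$(vii) in one line: if $H$ is anti-dispersed, every infinite-dimensional subspace of $H$ is non-dispersed, so by (viii) $T$ has no compact restriction to any such subspace, and Theorem~\ref{usf}(iv) gives that $T|_{H}$ is USF. Your argument for (iv)$\Rightarrow$(vii) in effect rediscovers this, but detours through a basic $\rho_{T}$-null sequence, then through Theorem~\ref{basic}(i), and finally back through your disjointification---when in fact once you have established (i)$\Rightarrow$(viii), the compact restriction $T|_{G}$ on the non-dispersed $G$ already contradicts (viii) directly. Note also that your ``(iv)$\Rightarrow$(vii)'' never actually uses (iv); it is really (i)$\Rightarrow$(vii), which is harmless since you have (i)$\Leftrightarrow$(iv) from \cite{gmm}, but the labeling is slightly misleading.
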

\begin{proof}
First, note that (v)$\Rightarrow$(iv)$\Rightarrow$(iii)$\Rightarrow$(ii) and (ix)$\Rightarrow$(viii) follow from the implications for the corresponding classes of operators, while (viii)$\Rightarrow$(vi) is obvious. (vi),(ii)$\Rightarrow$(i) are proven similarly to the implications (iii),(xi)$\Rightarrow$(ix) in  Theorem \ref{usf}. (iv)$\Rightarrow$(v) follows from (i)$\Leftrightarrow$(x) in Theorem \ref{usf}.

(vii)$\Rightarrow$(ix): From Corollary \ref{quasi} every non-dispersed subspace $H$ contains an (infinitely-dimensional) anti-dispersed sub-subspace $G$. Hence, a $\left.T\right|_{G}$ is USF, and so $\left.T\right|_{H}$ cannot be SS, according to Proposition \ref{ss}.

(viii)$\Rightarrow$(vii): If $H$ is an anti-dispersed subspace of $F$, then it has no dispersed infinitely dimensional subspaces, and so no restriction of $T$ to an infinitely dimensional subspace of $H$ is compact. Hence, $\left.T\right|_{H}$ is USF, by virtue of Theorem \ref{usf}.\medskip

(v)$\Rightarrow$(viii): Without loss of generality we may assume $\|T\|\le 1$. Assume that there is a non-dispersed subspace $H$ of $E$ such that $\left.T\right|_{H}$ is compact. Then, there is a disjoint sequence  $\left\{f_{n}\right\}_{n\in\N}\subset \So_{F}$, and a sequence  $\left\{h_{n}\right\}_{n\in\N}\subset \So_{H}$ such that $\|f_{n}-h_{n}\|\xrightarrow[n\to\8]{} 0$. Let $m\in\N$ and $\delta>0$ be such that the restriction of $T$ to $\overline{\spa}\left\{f_{n}\right\}_{n\ge m}$ is bounded from below by $\delta$, and simultaneously $\|f_{n}-h_{n}\|<\frac{\delta}{3}$, when $n\ge m$. Then, for $n,k\ge m$ we have
\begin{align*}
\|Th_{n}-Th_{k}\|&\ge \|T\left(f_{n}-f_{k}\right)\|-\|T\left(h_{n}-f_{n}\right)\|-\|T\left(h_{k}-f_{k}\right)\|\\
&\ge \delta\|f_{n}-f_{k}\|-\|h_{n}-f_{n}\|-\|h_{k}-f_{k}\|\ge \delta\|\left|f_{n}\right|+\left|f_{k}\right|\|-\frac{2\delta}{3}\ge \frac{\delta}{3}.
\end{align*}
This contradicts the fact that $T\left\{h_{n}\right\}_{n\ge m}\subset T\Bbo_{H}$ is relatively compact.\medskip

(i)$\Rightarrow$(iv): First, note that since $\rho_{T}$ is continuous, no normalized disjoint sequence in $\So_{F}$ is $\rho_{T}$-null if and only if no almost disjoint sequence in
 $\So_{F}$ is $\rho_{T}$-null. Assume that the restriction of $T$ to $H=\overline{\spa}\left\{f_{n}\right\}_{n\in\N}$ is not USF, where $\left\{f_{n}\right\}_{n\in\N}$ is disjoint. Then, from Theorem \ref{usf} there is a weakly null normalized net in $H$ that is $\rho_{T}$-null. From part (ii) of Lemma \ref{quas}, we can select an almost disjoint sequence $\left\{h_{n}\right\}_{n\in\N}\subset\partial\Bbo_{H}$, which is $\rho_{T}$-null.
\end{proof}

\begin{corollary}\label{adns}Any DNS operator maps anti-dispersed subspaces onto closed subspaces. It also maps closed bounded subsets of anti-dispersed subspaces onto closed sets.
\end{corollary}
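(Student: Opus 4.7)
The plan is to reduce both statements to parts of Theorem \ref{usf} via condition (vii) of Theorem \ref{dns}. If $H$ is an anti-dispersed subspace of $F$ and $T\in\Lo\left(F,E\right)$ is DNS, then Theorem \ref{dns}(vii) immediately gives that the restriction $\left.T\right|_{H}:H\to E$ is USF. So the real work is just translating USF-properties of $\left.T\right|_{H}$ back into statements about $T$ acting on subsets of $H$.

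For the first assertion, USF operators have closed range by Theorem \ref{usf}(i), so $T\left(H\right)=\left(\left.T\right|_{H}\right)\left(H\right)$ is a closed subspace of $E$. No further work is needed.

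For the second assertion, I would invoke Theorem \ref{usf}(vi): since $\left.T\right|_{H}$ is USF, the restriction $\left.T\right|_{\Bbo_{H}}$ is a closed map, i.e.\ $T\left(A\right)$ is closed in $E$ for every closed $A\subset \Bbo_{H}$. A closed bounded set $A\subset H$ is contained in $r\Bbo_{H}$ for some $r>0$, so $\frac{1}{r}A$ is closed in $\Bbo_{H}$; applying the above, $\frac{1}{r}T\left(A\right)=T\left(\frac{1}{r}A\right)$ is closed in $E$, and rescaling by $r$ gives closedness of $T\left(A\right)$.

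There is essentially no obstacle here: both claims are cheap corollaries once Theorem \ref{dns}(vii) is available, and the scaling argument for general closed bounded sets is standard. The only mild point of care is that $H$ should be taken closed (so that ``closed bounded subset of $H$'' is unambiguous and Theorem \ref{usf} applies to $\left.T\right|_{H}$ between Banach spaces), which is implicit in the standing convention of the paper that anti-dispersed subspaces are subspaces in the topological sense.
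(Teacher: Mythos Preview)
Your proof is correct and is exactly the argument the paper has in mind: the corollary is stated without proof precisely because it follows at once from Theorem~\ref{dns}(vii) together with parts (i) and (vi) of Theorem~\ref{usf}, via the scaling you indicate. Your remark about $H$ being closed is the only thing worth noting, and it is indeed implicit in the paper's usage.
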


Theorem \ref{dns} allows us to reduce consideration of an operator to considering the seminorm induced by it. Accordingly, let us call an operator $T:F\to E$ \emph{strictly DNS} if there is $\delta>0$ such that $\liminf\limits_{n\to\8}\rho_{T}\left(f_{n}\right)>\delta$, for every disjoint $\left\{f_{n}\right\}_{n\in\N}\subset\So_{F}$. Equivalently, this means that there is no disjoint $\left\{f_{n}\right\}_{n\in\N}\subset\So_{F}$ such that $\rho_{T}\left(f_{n}\right)\le\delta$. Clearly, every strictly DNS operator is DNS. Let us call $T$ \emph{weakly/strongly DNS} if $\rho_{T}$ is un-/una-complementary. Of course, these two classes coincide if $F$ is order continuous.

\begin{corollary}\label{cdns}The class of strictly/weakly/strongly DNS operators is open in $\Lo\left(F,E\right)$. Moreover, for $T\in \Lo\left(F,E\right)$ the following is true:
\item[(i)] If $T$ is (strictly/weakly/strongly) DNS, then $\Ker T$ is (strictly/weakly/strongly) dispersed. The converse is true if $T$ has a closed range.
\item[(ii)] If $G$ is a Banach space and $S\in \Lo\left(E,G\right)$ is such that $ST$ is (strictly/weakly/strongly) DNS, then so is $T$.
\item[(iii)] If additionally $S$, and $R\in \Lo\left(F\right)$ are bounded from below, then $T$ and $STR$ are (not) (strictly/weakly/strongly) DNS simultaneously.
\end{corollary}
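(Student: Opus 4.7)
The plan is to handle the three variants (strictly, weakly, strongly DNS) in parallel. The weakly and strongly variants are, by definition, precisely the un- and una-complementarity of the seminorm $\rho_T$, so every claim about them specializes a result from Section~\ref{comple}. The strictly DNS variant is framed in terms of disjoint sequences rather than a topology, but the parallel statements will be supplied by direct seminorm estimates that mirror those general propositions.

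For openness, the weakly/strongly cases are immediate from Proposition~\ref{complo}(i) applied to $\tau$ being the un-/una-topology. For strictly DNS, I would argue directly: if $T$ is strictly DNS with constant $\delta>0$ and $\|T-S\|<\delta/2$, then $\left|\rho_T(f)-\rho_S(f)\right|\le\|T-S\|$ for every $f\in \So_{F}$, so a disjoint $\{f_{n}\}\subset\So_{F}$ with $\rho_S(f_{n})\le\delta/2$ would satisfy $\rho_T(f_{n})<\delta$, contradicting strict DNS-ness of $T$; hence $S$ is strictly DNS with constant $\delta/2$.

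For (i), the weakly/strongly cases combine Proposition~\ref{compl1}(ii) applied to $\rho_T$ (whose kernel is $\Ker T$) with Proposition~\ref{complo}(iv) for the closed-range converse. For the strictly DNS implication, if $\{f_{n}\}\subset\So_{F}$ is a disjoint sequence at distance at most $\varepsilon$ from $\Ker T$, Lemma~\ref{sph} yields $g_{n}\in\So_{\Ker T}$ with $\|f_{n}-g_{n}\|\le 2\varepsilon$, whence $\rho_T(f_{n})\le 2\|T\|\varepsilon$; strict DNS-ness with constant $\delta$ then forces $\varepsilon\ge\delta/(2\|T\|)$, so $\Ker T$ is strictly dispersed. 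The closed-range converse uses Proposition~\ref{closed} to produce the equivalence $\rho_T\sim\rho_{\Ker T}$, which transfers disjoint witnesses between the two strictness statements.

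For (ii) and (iii), the weakly/strongly cases are verbatim specializations of Proposition~\ref{complo}(ii) and (iii). For strictly DNS (ii), the inequality $\rho_{ST}\le\|S\|\rho_T$ gives the implication directly: a disjoint witness for $T$ with constant $\delta/\|S\|$ yields one for $ST$ with constant $\delta$. For strictly DNS (iii), the hypothesis that $S$ is bounded from below gives the two-sided bound $c_{S}\rho_{TR}\le\rho_{STR}\le\|S\|\rho_{TR}$, reducing the claim to the equivalence of strict DNS-ness of $T$ and $TR$. The main obstacle is precisely this $R$-substitution, since $R\in\Lo\left(F\right)$ bounded from below need not preserve disjointness of normalized sequences; I expect to handle it by mimicking the proof of Proposition~\ref{complo}(iii), exploiting the uniform lower bound $\|Rf_{n}\|\ge c_{R}$ and a careful transfer of disjoint witnesses through $R$.
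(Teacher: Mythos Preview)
Your approach to the openness claim and to parts (i) and (ii) is correct and is exactly what the paper does: the weakly/strongly variants are read off from Proposition~\ref{complo}, while the strictly DNS variant follows from the elementary estimate $\left|\rho_T-\rho_S\right|\le\|T-S\|$ on $\So_F$ (the paper simply calls these cases ``easy to see'').

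For part (iii) your instinct is right and your proposed fix is not. The obstacle you name---that a bounded-from-below $R\in\Lo(F)$ need not carry disjoint sequences to almost disjoint ones, nor be un-continuous---is genuine and cannot be circumvented by mimicking Proposition~\ref{complo}(iii). The $S$-half of that proposition works because $c\,\rho_T\le\rho_{ST}\le\|S\|\rho_T$, but there is no analogous two-sided bound relating $\rho_{TR}$ to $\rho_T$; these seminorms are in general incomparable. In fact the $R$-half of the claim fails as stated. On $F=L_2[0,1]$ let $T$ be the orthogonal projection onto the orthogonal complement of the Rademacher span; then $\Ker T$ is dispersed and $T$ has closed range, so $T$ is DNS by part (i). Pick any disjoint orthonormal sequence $\{f_n\}$ and a unitary $R$ on $L_2$ with $Rf_n=r_n$; then $TRf_n=0$, so $TR$ is not DNS. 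Conversely, on $F=\ell_2$ take $Tx=(x_2,x_4,\dots)$ and $Rx=(x_1,x_1,x_2,x_2,\dots)$; then $R$ is bounded from below and $TR=Id_F$ is DNS, while $T$ is not, since $\Ker T$ contains the disjoint sequence $\{e_{2k-1}\}$. As both lattices are order continuous, these examples defeat all four variants at once. The paper's own proof of (iii) is no more detailed than yours here---it cites Proposition~\ref{complo}(iii) and calls the strictly DNS case ``easy''---so the difficulty you flagged lies in the statement itself, and no imitation of the $S$-argument will repair it.
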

\begin{proof}
The main statement is easy to see for the strictly DNS case, while for the weakly/strongly DNS operators it follows from part (i) of Proposition \ref{complo}.

(i): The direct implication is easy to see while the converse follows from  Proposition \ref{closed}.

(ii) and (iii) for the DNS case follow from part (i) of the Theorem \ref{dns}, for strictly DNS case they follow easily from the definition, and for weakly/strongly DNS case they are consequence of parts (ii) and (iii) of Proposition \ref{complo}
\end{proof}

The properties of DNS operators lead to the following characterization of the dispersed subspaces (equivalence of (i) and (iv) was established in \cite{gmm}):

\begin{proposition}\label{sumcl} For a subspace $H$ of $F$ the following conditions are equivalent:
\item[(i)] $H$ is dispersed;
\item[(ii)] The quotient map $Q_{H}$ is a DNS operator from $F$ onto $F\slash H$;
\item[(iii)] For every anti-dispersed subspace $G$ of $F$ we have that $G \cap H$ is finitely dimensional and $G + H$ is closed;
\item[(iv)] For every disjoint sequence $\left\{f_{n}\right\}_{n\in\N}\subset F$ we have that $\overline{\spa}\left\{f_{n}\right\}_{n\in\N} \cap H$ is finitely dimensional and $\overline{\spa}\left\{f_{n}\right\}_{n\in\N} + H$ is closed.
\end{proposition}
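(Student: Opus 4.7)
The plan is to prove the cycle (i)$\Leftrightarrow$(ii)$\Rightarrow$(iii)$\Rightarrow$(iv)$\Rightarrow$(ii), thereby closing the chain. The key observation throughout is that $\rho_{Q_H}=\rho_H$, which allows us to reformulate everything about the quotient map in lattice-theoretic terms, and that for any subspace $G\subset F$ the kernel of $Q_H|_G$ is exactly $G\cap H$ while the range $Q_H(G)=(G+H)/H$ is closed in $F/H$ if and only if $G+H$ is closed in $F$ (using that $Q_H^{-1}(Q_H(G))=G+H$ together with continuity and openness of the quotient map).

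For (i)$\Leftrightarrow$(ii), I would simply observe that the characterization of dispersedness stated right after Definition (at the beginning of Section \ref{dispe}) -- namely that $H$ is dispersed iff $\liminf\rho_{H}(f_{n})>0$ for every normalized disjoint $\{f_n\}$ -- is exactly the same as condition (i) of Theorem \ref{dns} applied to $T=Q_H$, since passing to subsequences of disjoint sequences again gives disjoint sequences. For (ii)$\Rightarrow$(iii), I would invoke condition (vii) of Theorem \ref{dns}: if $Q_H$ is DNS, then $Q_H|_G$ is USF for every anti-dispersed subspace $G$; hence $\ker Q_H|_G = G\cap H$ is finite dimensional and the range is closed, which, by the observation above, means $G+H$ is closed.

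For (iii)$\Rightarrow$(iv), the work is entirely done by Proposition \ref{anti}: the closed span of any disjoint sequence is anti-dispersed, so (iii) applied to $G=\overline{\spa}\{f_n\}$ yields exactly (iv). For (iv)$\Rightarrow$(ii), I would note that (iv) says precisely that for every disjoint $\{f_n\}\subset F$, the restriction $Q_H|_{\overline{\spa}\{f_n\}}$ has finite-dimensional kernel and closed range, i.e.\ is USF; by condition (iv) of Theorem \ref{dns} this is equivalent to $Q_H$ being DNS, which is (ii).

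I don't expect any real obstacle -- the proposition is effectively a translation exercise built on Theorem \ref{dns} and Proposition \ref{anti}. The only point requiring a moment of care is the standard Banach-space fact relating closedness of the range of $Q_H|_G$ to closedness of $G+H$ in $F$; I would state this cleanly once at the outset (perhaps invoking Proposition \ref{closed} for $T=Q_H|_G$ on the Banach space $G$) so that the four implications can then be dispatched very briefly.
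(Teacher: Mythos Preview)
Your proposal is correct and follows essentially the same approach as the paper: both rely on the identification $\rho_{Q_H}=\rho_H$, invoke Theorem~\ref{dns} for the heavy lifting, and use Proposition~\ref{anti} for (iii)$\Rightarrow$(iv). The only minor difference is that you close the cycle via (iv)$\Rightarrow$(ii) by reading off Theorem~\ref{dns}(iv) directly, whereas the paper proves (iv)$\Rightarrow$(i) by hand using part~(iv) of Theorem~\ref{basic} to pass to a tail with trivial intersection; your route is slightly slicker since it avoids re-deriving what Theorem~\ref{dns} already packages, while the paper's route is more self-contained at that step.
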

\begin{proof}
(i)$\Rightarrow$(ii) follows from part (i) of Corollary \ref{cdns}. (iii)$\Rightarrow$(iv) follows from the fact that the closed span of a disjoint sequence is anti-dispersed, according to Proposition \ref{anti}.

(ii)$\Rightarrow$(iii): Since every infinitely dimensional subspace of $H$ has to be dispersed, and no infinitely dimensional subspace of $G$ can be dispersed, $\dim\left(G \cap H\right)<\8$. From Corollary \ref{adns} we have that $Q_{H}G$ is closed in $F\slash H$, from where $G+H=Q_{H}^{-1}G$ is closed.

(iv)$\Rightarrow$(i): Let $\left\{f_{n}\right\}_{n\in\N}\subset \So_{F}$ be disjoint. From part (iv) of Theorem \ref{basic}, there is $m\in\N$ such that $\overline{\spa}\left\{f_{n}\right\}_{n\ge m} \cap E=\left\{0_{F}\right\}$ and $\overline{\spa}\left\{f_{n}\right\}_{n\ge m} + E$ is closed. Hence, $\overline{\spa}\left\{f_{n}\right\}_{n\ge m} + E\simeq \overline{\spa}\left\{f_{n}\right\}_{n\ge m} \oplus_{\8} E$, and so $\rho_{E}\left(f_{n}\right)$ is bounded from below.
\end{proof}

\begin{corollary}\label{fico} If $G$ and $H$ are subspaces of $F$ such that $G$ is (anti-)dispersed and is of finite co-dimension in $H$, then $H$ is also (anti-)dispersed.
\end{corollary}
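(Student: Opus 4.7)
The plan is to pick a finite-dimensional algebraic complement $F_{0}$ of $G$ inside $H$, so that $H=G+F_{0}$, and then handle the two cases separately using the characterizations we already have.

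For the anti-dispersed case, suppose $G$ is anti-dispersed and let $W\subset H$ be an infinitely dimensional (closed) subspace. The natural map $W\to H\slash G$ has kernel $W\cap G$, and its codomain is finite-dimensional, so $W\cap G$ has finite codimension in $W$ and is therefore infinitely dimensional. Since $W\cap G\subset G$ and $G$ is anti-dispersed, $W\cap G$ contains an almost disjoint sequence, which then lies in $W$. Hence $H$ is anti-dispersed.

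For the dispersed case, suppose $G$ is dispersed. We verify condition (iv) of Proposition \ref{sumcl} for $H$: given a disjoint sequence $\{f_{n}\}_{n\in\N}\subset F$, put $K=\overline{\spa}\{f_{n}\}_{n\in\N}$, and we must show that $K\cap H$ is finite-dimensional and $K+H$ is closed. By the same quotient argument as above (applied to $K\cap H\to H\slash G$), the subspace $K\cap G$ has finite codimension in $K\cap H$; but $K\cap G$ is finite-dimensional by Proposition \ref{sumcl} applied to $G$, so $K\cap H$ is finite-dimensional. For the sum, write $K+H=(K+G)+F_{0}$; the subspace $K+G$ is closed by dispersedness of $G$, and adding the finite-dimensional $F_{0}$ preserves closedness. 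Hence $H$ is dispersed.

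There is no real obstacle here: both cases reduce to the elementary fact that if $G$ has finite codimension in $H$, then for any subspace $W$ of the ambient space the intersection $W\cap G$ has finite codimension in $W\cap H$. The nontrivial input is packaged into Proposition \ref{sumcl}, which lets us reformulate (dispersed) as a condition that behaves well under enlargement by a finite-dimensional piece.
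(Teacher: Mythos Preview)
Your proof is correct and is precisely the argument the paper has in mind: the corollary is stated without proof immediately after Proposition \ref{sumcl}, and your dispersed case is the straightforward verification of condition (iv) there, while the anti-dispersed case is the obvious direct check from the definition. There is nothing to add.
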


It turns out that the implications between different variations of the DNS property mirror the ones between variations of disperseness.

\begin{proposition}\label{sdns} Let $T:F\to E$ be an operator. Then:
\item[(i)] If $T$ is DNS, then it is weakly DNS.
\item[(ii)] If $T$ is strongly DNS, then it is strictly DNS.
\end{proposition}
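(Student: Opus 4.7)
My plan is to treat the two implications separately, each by a contrapositive or direct argument tailored to the topology in question.

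For (i), I argue by contrapositive. Assume $T$ is not weakly DNS, so by condition (i) of Theorem \ref{compl} (applied with $\tau$ equal to the un-topology) there is a net $\{e_i\}_{i\in I}\subset\So_F$ null with respect to both the un-topology and $\rho_T$. From this net I extract an almost disjoint $\rho_T$-null sequence by a Kadec-Pelczynski-style diagonal recursion: having chosen $e_{i_1},\ldots,e_{i_{n-1}}$, I set $h_{n-1}=\bigvee_{k<n}|e_{i_k}|\in F_+$ and pick $i_n\ge i_{n-1}$ so that $\|\,|e_{i_n}|\wedge h_{n-1}\,\|<2^{-n}$ and $\rho_T(e_{i_n})<1/n$. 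Such an index exists by directedness of $I$, since both smallness conditions hold eventually by the un- and $\rho_T$-nullity of the net. The resulting summable pairwise overlaps $\|\,|e_{i_n}|\wedge|e_{i_m}|\,\|<2^{-\max(m,n)}$ feed into the standard disjointification argument underlying Theorem 3.2 of \cite{dot} and yield a disjoint $\{f_n\}\subset F$ with $\|f_n-e_{i_n}\|\to 0$. Normalizing and using continuity of $\rho_T$, I obtain a disjoint sequence in $\So_F$ on which $\rho_T$ is null, contradicting condition (i) of Theorem \ref{dns}.

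For (ii), I follow the scheme of Proposition \ref{str}. Since $T$ is strongly DNS, $\rho_T$ is una-complementary, and by condition (v) of Theorem \ref{compl} there exist $\delta>0$, $h\in F^a_+$, and $\varepsilon>0$ such that $\|\,|f|\wedge h\,\|\ge\varepsilon$ for every $f\in\So_F$ with $\rho_T(f)\le\delta$. The principal ideal $F_h$ is contained in the order-continuous ideal $F^a$ and has $h$ as a strong unit, so by \cite[Theorem 2.7.8]{mn} it admits an AL-norm $\|\cdot\|_1\le\|\cdot\|$, which by Amemiya's theorem \cite[Theorem 2.4.8]{mn} is topologically equivalent to $\|\cdot\|$ on $[0,h]$. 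Hence some $\varepsilon_1>0$ satisfies $\|\,|f|\wedge h\,\|_1\ge\varepsilon_1$ whenever $f\in\So_F$ and $\rho_T(f)\le\delta$.

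To finish, suppose $\{f_n\}_{n\in\N}\subset\So_F$ is a disjoint sequence with $\rho_T(f_n)\le\delta$ for all $n$. Then $\{|f_n|\wedge h\}$ is disjoint in $F_h$ with $\bigvee_{n\le N}|f_n|\wedge h\le h$ for every $N$, and the AL property of $\|\cdot\|_1$ gives
\[
N\varepsilon_1\le\sum_{n=1}^{N}\|\,|f_n|\wedge h\,\|_1=\Big\|\bigvee_{n=1}^{N}|f_n|\wedge h\Big\|_1\le\|h\|_1,
\]
absurd for $N$ large. Applying this to any infinite subsequence of $\{f_n\}$ shows $\{n:\rho_T(f_n)\le\delta\}$ is finite, so $\liminf_n\rho_T(f_n)\ge\delta>\delta/2$ and $T$ is strictly DNS with parameter $\delta/2$. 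The delicate step in (i) is arranging that the almost disjoint sequence produced by Kadec-Pelczynski inherits the $\rho_T$-null property, which forces both smallness conditions to be imposed simultaneously in the recursion rather than separately; the substantive input to (ii) is the reduction to an AL-norm on $F_h$ via Meyer-Nieberg and Amemiya, after which disjointness yields the pigeonhole contradiction.
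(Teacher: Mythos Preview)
Your proof of (i) is essentially identical to the paper's: both assume $T$ is not weakly DNS, obtain a normalized net that is simultaneously un-null and $\rho_T$-null, and invoke a slight modification of the Kadec--Pelczynski argument (Theorem~3.2 of \cite{dot}) to extract an almost disjoint $\rho_T$-null sequence, contradicting DNS. You supply more detail on the recursion, but the approach is the same.

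For (ii) your argument is correct but takes a genuinely longer route than the paper. The paper's proof is a two-line observation: from condition (v) of Theorem~\ref{compl} one gets $\delta>0$ and a una-neighborhood $U$ of $0_F$ disjoint from $\{f\in\So_F:\rho_T(f)<\delta\}$; since every normalized disjoint sequence is una-null (a fact recorded earlier in the paper, following directly from order continuity of $F^a$), its tail eventually lies in $U$, forcing $\rho_T(f_n)\ge\delta$ for large $n$. You instead follow the template of the paper's separate proof of Proposition~\ref{str}: pass to an AL-norm on the principal ideal $F_h$ via \cite[Theorem 2.7.8]{mn} and Amemiya's theorem, then derive a pigeonhole contradiction from additivity of the AL-norm on disjoint elements. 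Your route is self-contained and quantitative (it re-derives, in effect, that disjoint sequences are una-null), but the paper's route is shorter because it simply quotes that fact. The paper even remarks before Proposition~\ref{str} that Proposition~\ref{sdns}(ii) will be proved ``using a different method,'' so your choice to recycle the Proposition~\ref{str} machinery is a deliberate alternative rather than the intended shortcut.
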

\begin{proof}
(i): Assume that there is a net in $\So_{F}$, which is null simultaneously in the un-topology and $\rho_{T}$. By a slight modification of the proof of Kadec-Pelczynski theorem \cite[Theorem 3.2]{dot}, we can select from this net an almost disjoint sequence $\left\{f_{n}\right\}_{n\in\N}\subset \So_{F}$, which is still $\rho_{T}$-null. Contradiction.\medskip

(ii): From part (v) of Theorem \ref{compl} there is a una-neighborhood $U$ of $0_{F}$ which does not intersect with the set $\left\{f\in\So_{F},~ \rho_{T}\left(f\right)<\delta\right\}$, for some $\delta>0$. Then, since every disjoint normalized sequence is una-null, its tail is contained in $U$, and so it is $\delta$-separated from $0_{F}$ with respect to $\rho_{T}$.
\end{proof}

Now we are equipped to partially answer a question from \cite{gmm} whether every DNS operator is strictly DNS (in our terminology), and consequently whether DNS operators form an open set in $\Lo\left(F,E\right)$. It turns out that this is the case in order continuous Banach lattices.

\begin{theorem} If $F$ is order continuous, then for $T\in\Lo\left(F,E\right)$ the following conditions are equivalent:
\item[(i)] $T$ is DNS;
\item[(ii)] $T$ is strictly DNS;
\item[(iii)] No un-null sequence (net) in $\So_{F}$ is $\rho_{T}$-null;
\item[(iv)] There are $\varepsilon,\delta>0$ and $h\in F_{+}$ such that $\rho_{T}\left(f\right)\ge\delta$ as soon as $f\in \So_{F}$ with $\|\left|f\right|\wedge h\|<\varepsilon$.
\end{theorem}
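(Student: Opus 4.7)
The plan is to stitch together the preceding machinery, observing that condition (iv) is \emph{precisely} the statement that $\rho_{T}$ is un-complementary (i.e.\ $T$ is weakly DNS), via condition (v) of Theorem \ref{compl} applied with $\tau$ equal to the un-topology and $\pi$ the topology generated by the semi-norm $\rho_{T}$. Since $F$ is order continuous, the un- and una-topologies coincide, so weakly DNS $=$ strongly DNS in the present setting. This is the key remark that allows the implications of Proposition \ref{sdns} to close up into a cycle.

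First I would dispose of the easy direction (ii)$\Rightarrow$(i), which is immediate from the definition of strictly DNS. Next, for (i)$\Rightarrow$(iv): DNS implies weakly DNS by Proposition \ref{sdns}(i), and weakly DNS translates to the statement (iv) by Theorem \ref{compl}(v), after unwinding the basic un-neighborhoods of $0_F$ as sets of the form $\{f\in F:\|\,|f|\wedge h\|<\varepsilon\}$ with $h\in F_+$ and $\varepsilon>0$. To close the cycle (iv)$\Rightarrow$(ii), I would note that (iv) just says $T$ is weakly DNS, hence (by order continuity, since $F^a=F$) also strongly DNS, and then Proposition \ref{sdns}(ii) gives that $T$ is strictly DNS.

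For (iii) I would invoke the net/sequence version of Theorem \ref{compl}: weakly DNS, via condition (i) there, says that no net in $\So_F$ is simultaneously un- and $\rho_T$-null, which yields the net version of (iii). Conversely, the sequence version of (iii) immediately implies that no normalized disjoint sequence is $\rho_T$-null (disjoint sequences are un-null when $F$ is order continuous), so by characterization (i) of Theorem \ref{dns}, $T$ is DNS. Thus both the net and the sequence formulations of (iii) are equivalent to (i), and hence to (ii) and (iv).

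The main conceptual obstacle is hidden in the two ingredients inherited from Proposition \ref{sdns}: the passage from DNS (a sequence condition on disjoint sequences) to weakly DNS (a net condition on un-null nets) goes through Kadec--Pelczynski and the ability to extract almost disjoint sequences from un-null nets; and the converse passage from strongly DNS (a net condition on una-null nets) to strictly DNS (a uniform estimate on disjoint sequences) is the genuinely non-trivial direction. Granting both, the coincidence of the un- and una-topologies in the order continuous setting is exactly what forces all four conditions to collapse together, so no new estimate needs to be produced at this stage.
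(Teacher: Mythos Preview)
Your proposal is correct and follows essentially the same route as the paper: the cycle (ii)$\Rightarrow$(i)$\Rightarrow$(iv)$\Rightarrow$(ii) is closed exactly as you describe via Proposition \ref{sdns} together with the coincidence of the un- and una-topologies under order continuity, and (iii) is slotted in by observing that (iv) yields the net version while the sequence version gives (i) because disjoint sequences are un-null. Your write-up is in fact slightly more explicit than the paper's one-line proof about why (iv) is precisely condition (v) of Theorem \ref{compl} and why the sequence version of (iii) implies (i), but the underlying argument is identical.
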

\begin{proof}
It is clear that (iv) implies (iii) for nets, (iii) for sequences implies (i), (ii)$\Rightarrow$(i) trivially, and (i)$\Rightarrow$(iv)$\Rightarrow$(ii) is the content of Proposition \ref{sdns}, since in order continuous Banach lattices the notions of strongly DNS and weakly DNS coincide.
\end{proof}

\begin{remark}
An alternative to reproving Kadec-Pelczynski theorem to obtain part (i) of  Proposition \ref{sdns} in order to get the implication (i)$\Rightarrow$(iv) is the following. Since $F$ is order continuous, the un-topology is Frechet-Urysohn (see \cite[Proposition 7.3]{kmt}). Hence, a DNS operator $T$ is weakly DNS, as otherwise from Theorem \ref{compl} there would be a normalized sequence which is simultaneously un-null and $\rho_{T}$-null, and so contains an almost disjoint (and still $\rho_{T}$-null) subsequence by the regular Kadec-Pelczynski theorem. This approach however is not always applicable, since the un-topology is not always Frechet-Urysohn (see \cite[Example 1.3]{kmt}).
\qed\end{remark}

\begin{question}
If $F$ is such that every dispersed subspace is strictly dispersed, does it follow that every DNS operator is strictly DNS?
\end{question}

Let us consider the counterpart to the DNS operators. An operator $S:F\to E$ is called \emph{disjointly strictly singular (DSS)} if one of the following equivalent conditions is satisfied (equivalence of (i), (ii) and (iv) was established in \cite{gmm}):

\begin{proposition}\label{dss}For $T\in\Lo\left(F,E\right)$ the following conditions are equivalent:
\item[(i)] If $\left\{f_{n}\right\}_{n\in\N}\subset F$ is disjoint, then $S$ is not bounded from below on $\overline{\spa}\left\{f_{n}\right\}_{n\in\N}$;
\item[(ii)] If $\left\{f_{n}\right\}_{n\in\N}\subset F$ is disjoint, then the restriction of $S$ to $\overline{\spa}\left\{f_{n}\right\}_{n\in\N}$ is SS;
\item[(iii)] Any restriction of $S$ to an anti-dispersed subspace is strictly singular;
\item[(iv)] No restriction of $S$ to a non-dispersed subspace is bounded from below;
\item[(v)] No restriction of $S$ to a non-dispersed subspace is USF;
\item[(vi)] Every non-dispersed subspace contains an almost disjoint $\rho_{S}$-null sequence.
\end{proposition}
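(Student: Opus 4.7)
I plan to build on the equivalence of (i), (ii), and (iv) established in \cite{gmm} and slot the three new conditions (iii), (v), (vi) into the chain. The guiding principle is that Proposition \ref{anti} lets us replace a disjoint sequence by an anti-dispersed subspace, Corollary \ref{quasi} lets us descend from an arbitrary non-dispersed subspace to an anti-dispersed sub-subspace, and Proposition \ref{ss} translates strict singularity back into the existence of a normalized weakly null $\rho_S$-null net that can be refined to an almost disjoint one via Lemma \ref{quas}(ii).

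For (iv)$\Leftrightarrow$(v), the direction (v)$\Rightarrow$(iv) is immediate since bounded-from-below implies USF. For the converse, if $S|_H$ were USF on a non-dispersed $H$, then Theorem \ref{usf}(ii) would supply a finite-codimensional $G\subset H$ on which $S$ is bounded from below; the contrapositive of Corollary \ref{fico} forces $G$ to remain non-dispersed, contradicting (iv). For (iii)$\Leftrightarrow$(ii), the direction (iii)$\Rightarrow$(ii) is immediate because Proposition \ref{anti} makes $\overline{\spa}\{f_n\}$ anti-dispersed for every normalized disjoint $\{f_n\}$. The reverse I would route through the equivalence (ii)$\Leftrightarrow$(iv) already in hand: if $G$ is anti-dispersed and $S|_G$ fails to be SS, then Proposition \ref{ss}(i) produces an infinite-dimensional $H\subset G$ on which $S$ is bounded from below, and $H$ is still non-dispersed (being an infinite-dimensional subspace of an anti-dispersed space), contradicting (iv).

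It remains to handle (vi). The implication (vi)$\Rightarrow$(i) is direct: for normalized disjoint $\{f_n\}$ the span $\overline{\spa}\{f_n\}$ is non-dispersed, so (vi) produces an almost disjoint $\rho_S$-null sequence inside it, ruling out boundedness from below. For (iii)$\Rightarrow$(vi) I would start with a non-dispersed $H$, extract an almost disjoint $\{g_n\}\subset \So_H$, and thin out to a subsequence with $\sum\|g_n-e_n\|$ small enough that $\{g_n\}$ becomes quasi-disjoint. Proposition \ref{anti} then renders $\overline{\spa}\{g_n\}\subset H$ anti-dispersed, so (iii) makes $S$ strictly singular on that span. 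Proposition \ref{ss}(iv) delivers a normalized weakly null $\rho_S$-null net inside $\overline{\spa}\{g_n\}$, and Lemma \ref{quas}(ii), applied with the quasi-disjoint sequence $\{g_n\}$, refines this net into an almost disjoint $\rho_S$-null sequence still lying in $H$.

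The subtlest point is the descent in (iii)$\Rightarrow$(vi): one has to begin with an abstract non-dispersed subspace, devoid of any visible disjoint structure, and eventually manufacture an almost disjoint $\rho_S$-null sequence inside it. The quasi-disjoint intermediate is the crucial technical bridge, since it is exactly the hypothesis under which both Proposition \ref{anti} and Lemma \ref{quas}(ii) apply; once that bridge is in place the remaining links are short reductions to the \cite{gmm} equivalences and to the SS characterizations of Proposition \ref{ss}.
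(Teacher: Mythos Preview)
Your argument is correct and closes the loop, but it differs from the paper's route in one substantive place. The paper proves the chain $(\mathrm{v})\Rightarrow(\mathrm{iv})\Rightarrow(\mathrm{iii})\Rightarrow(\mathrm{ii})\Rightarrow(\mathrm{i})$ directly and then spends its effort on $(\mathrm{i})\Rightarrow(\mathrm{v})$: assuming $S|_E$ is USF on some non-dispersed $E$, it invokes Lemma \ref{quas}(i) to produce a disjoint sequence $\{f_n\}$ and an isomorphism $T$ with $Id_F-T$ compact and $T\left(\overline{\spa}\{f_n\}\right)\subset E$, so that $ST$ is USF on $\overline{\spa}\{f_n\}$ and hence, after a compact perturbation, $S$ itself is USF there, contradicting (i). You bypass this compact-perturbation trick entirely by taking the \cite{gmm} equivalence $(\mathrm{i})\Leftrightarrow(\mathrm{iv})$ as given and then deriving $(\mathrm{iv})\Rightarrow(\mathrm{v})$ via the much softer observation (Corollary \ref{fico}) that a finite-codimensional subspace of a non-dispersed space is still non-dispersed. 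Your route is shorter and more elementary, at the price of relying on \cite{gmm} as a black box; the paper's route is self-contained. For the equivalences involving (vi) and (iii) the two arguments are essentially the same: both pass through a quasi-disjoint sequence so that Proposition \ref{anti} and Lemma \ref{quas}(ii) apply, and then use a weak-null $\rho_S$-null net coming from the failure of USF (the paper does $(\mathrm{v})\Rightarrow(\mathrm{vi})$ and $(\mathrm{vi})\Rightarrow(\mathrm{v})$ via ``almost disjoint contains basic'', while you do $(\mathrm{iii})\Rightarrow(\mathrm{vi})\Rightarrow(\mathrm{i})$, but the content is identical).
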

\begin{proof}
(v)$\Rightarrow$(iv)$\Rightarrow$(iii)$\Rightarrow$(ii)$\Rightarrow$(i) are clear. (vi)$\Rightarrow$(v) follows from the fact that each almost disjoint sequence contains a basic one. (v)$\Rightarrow$(vi) follows from condition (xii) of Theorem \ref{usf} and part (ii) of Lemma \ref{quas}.

(i)$\Rightarrow$(v): Assume that there is a non-dispersed subspace $E$ such that $\left.S\right|_{E}$ is USF. Then from part (i) of Lemma \ref{quas} there is a disjoint sequence $\left\{f_{n}\right\}_{n\in\N}\subset F$ and an isomorphism $T$ on $F$ such that $Id_{F}-T$ is compact and $TH\subset E$, where $H=\overline{\spa} \left\{f_{n}\right\}_{n\in\N}$. Therefore, $\left.ST\right|_{H}$ is USF, and $S-ST$ is compact. Hence, $\left.S\right|_{H}$ is USF, and so from part (x) of Theorem \ref{usf} there is $m\in\N$ such that $S$ is bounded from below on $\overline{\spa} \left\{f_{n}\right\}_{n\ge m}$. Contradiction.
\end{proof}

Since the sum of SS operators is SS, the condition (ii) allows to conclude that the sum of DSS operators is DSS. Also, from the corresponding fact about SS and USF operators it follows that $T-S$ is DNS, for every DNS operator $T\in\Lo\left(F,E\right)$ and DSS operator $S\in\Lo\left(F,E\right)$, and in particular $T$ and $S$ can coincide only on dispersed subspaces. The condition (v) together with Corollary \ref{usf1} imply that the collection of DSS operators is closed. Finally, modifying condition (vi) gives the following characterization.

\begin{corollary}
If $T\in\Lo\left(F,E\right)$ is not complementary to the un-topology on any non-dispersed subspace, then it is DSS. The converse holds if $F$ is order-continuous.
\end{corollary}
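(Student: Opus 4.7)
The plan is to prove each direction by translating between the two characterizations: the external one, via complementarity of $\rho_T$ and the un-topology on subspaces (Theorem \ref{compl}), and the internal one for DSS, via the existence of almost disjoint $\rho_T$-null sequences in non-dispersed subspaces (condition (vi) of Proposition \ref{dss}). The link between the two will be a selection principle in the spirit of Kadec--Pelczynski, which was already used in the proof of part (i) of Proposition \ref{sdns}.

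For the forward direction, I would start with an arbitrary non-dispersed subspace $H\subset F$. By hypothesis, $T$ is not un-complementary on $H$, so by condition (i) of Theorem \ref{compl} applied to the un-topology and $\rho_T$ restricted to $H$, there is a net in $\So_{H}$ which is simultaneously un-null and $\rho_{T}$-null. Applying the Kadec--Pelczynski-type extraction that appears in the proof of Proposition \ref{sdns}(i), I would select from this net an almost disjoint sequence in $\So_F$ (whose members lie in $H$), which remains $\rho_T$-null. This produces exactly what condition (vi) of Proposition \ref{dss} demands for the subspace $H$, and since $H$ was an arbitrary non-dispersed subspace, $T$ is DSS.

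For the converse, assume $F$ is order continuous and $T$ is DSS. Take any non-dispersed subspace $H$. By Proposition \ref{dss}(vi), there is an almost disjoint, $\rho_T$-null sequence $\{f_n\}\subset\So_H$, witnessed by some disjoint $\{g_n\}\subset F$ with $\|f_n-g_n\|\to 0$. The key observation is that in an order continuous Banach lattice, every disjoint sequence is un-null (since $F^a=F$, the una- and un-topologies coincide, and disjoint sequences are always una-null). Consequently $g_n\xrightarrow{\mathrm{un}}0_F$, and since the norm-null tail $f_n-g_n$ is also un-null, so is $f_n$. Thus $\{f_n\}\subset\So_H$ is null in both the un-topology and $\rho_T$, which by Theorem \ref{compl}(i) precludes un-complementarity of $T$ on $H$.

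The only nontrivial point is the forward direction, since it requires extracting an almost disjoint sequence from a possibly transfinite net. The safest route is to mirror the argument in Proposition \ref{sdns}(i), which itself leans on Kadec--Pelczynski \cite[Theorem 3.2]{dot}; alternatively, if $F$ is order continuous, the un-topology is Frechet--Urysohn, so a sequence can be obtained directly via condition (vii) of Theorem \ref{compl}, avoiding any net-to-sequence passage. In the non-order-continuous setting one should verify that the net-version of Kadec--Pelczynski indeed produces an almost disjoint sequence inside $H$ (rather than merely in $F$), but this follows because the extracted sequence is a sub-net of the original, hence stays in $H$.
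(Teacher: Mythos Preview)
Your proposal is correct and follows exactly the route the paper intends: the corollary is stated immediately after Proposition~\ref{dss} with the remark that ``modifying condition (vi) gives the following characterization,'' and you carry out precisely that translation---using the Kadec--Pelczynski extraction (as in Proposition~\ref{sdns}(i)) for the forward direction, and the fact that disjoint sequences are un-null in the order continuous case for the converse. Your closing paragraph correctly identifies and resolves the only delicate point, namely that the extracted almost disjoint sequence stays inside $H$.
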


\begin{remark}\label{dd}
Since a restriction of a USF / bounded from below / SS operator is of the same type, and $\left\{f_{n}\right\}_{n\in\N}\subset \spa\left\{f^{+}_{n},f^{-}_{n}\right\}_{n\in\N}$, in the conditions (iv) and (v) of Theorem \ref{dns}, and in the condition (ii) of Proposition \ref{dss} it is enough to consider only positive disjoint sequences.
\qed\end{remark}

\section{The case of positive disjoint sequences}\label{lns}

In this section we will say few words about a certain generalization of DNS and DSS operators. Again, throughout the section $F$ is a Banach lattice, whereas $E$ is a Banach space. An operator $S:F\to E$ is \emph{called lattice strictly singular (LSS)}, if no restriction of $S$ to a closed sublattice of $F$ is bounded from below. It is easy to see that this condition is equivalent to not being bounded from below on a closed span of any positive disjoint sequence. Therefore, it is clear that every DSS operator is LSS, but it is unknown whether the converse is true. Moreover, it was shown in \cite{flt} that the converse is indeed true if one shows that the sum of LSS operators is LSS. Alternatively, due to Remark \ref{dd}, it is enough to prove that any restriction of $S$ to the closed span of a positive disjoint sequence is strictly singular. Even more precisely, using the results from \cite{flt} it is enough to show that the classes of DNS and DSS operators coincide on reflexive separable discrete Banach lattices.\medskip

The counterpart to LSS operators is defined analogously to DNS operators. In order to study them, we need to slightly modify the tools from Section \ref{comple}.

\begin{proposition}\label{pco}Let $\tau$ and $\pi$ be linear topologies on an ordered normed space $H$, weaker than the norm topology. The following conditions are equivalent:
\item[(i)] There is no net in $\So_{H}\cap H_{+}$, which is null with respect to both $\tau$ and $\pi$;
\item[(ii)] There are open neighborhoods $U\in\tau$ and $V\in\pi$ of $0_{H}$ with $U\cap V\cap \So_{H}\cap H_{+}=\varnothing$;
\item[(iii)] There are open neighborhoods $U\in\tau$ and $V\in\pi$ of $0_{H}$ such that $U\cap V\cap H_{+}\subset \Bo_{H}$.\medskip

If additionally $H$ is a normed lattice and $\tau$ and $\pi$ are locally solid, then the conditions above are equivalent to
\item[(iv)] $\tau$ and $\pi$ are complementary.
\end{proposition}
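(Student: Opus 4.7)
The plan is to establish the equivalence of (i)--(iii) by mirroring the corresponding arguments in the proof of Theorem \ref{compl}, since the positive cone $H_+$ behaves well under the operations needed there (positive scalar multiplication); then the equivalence with (iv) in the locally solid normed lattice setting will follow from the absolute value being a ``normalizing retraction'' onto $H_+$.

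For (i)$\Rightarrow$(ii), I would argue by contradiction exactly as in the proof of (i)$\Rightarrow$(ii) in Theorem \ref{compl}: take downward-directed bases $\mathcal{U}$ of $\tau$ and $\mathcal{V}$ of $\pi$ at $0_H$, and for each $(U,V) \in \mathcal{U} \times \mathcal{V}$ pick $e_{UV} \in U \cap V \cap \So_H \cap H_+$ using the assumed negation of (ii); the resulting net lives in $\So_H \cap H_+$ and is null in both $\tau$ and $\pi$, contradicting (i).

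For (ii)$\Rightarrow$(iii), I would follow the same trick as in Theorem \ref{compl}: replace $U_0,V_0$ from (ii) by balanced neighborhoods $U \subset U_0$, $V \subset V_0$, which exist in any linear topology by \cite[Theorem 4.3.6]{bn}. For every $r \ge 1$ we have $\tfrac{1}{r}(U \cap V) \subset U \cap V$. If $h \in U \cap V \cap H_+$ satisfied $\|h\| = r \ge 1$, then, since $H_+$ is closed under positive scalars, $h/r \in \So_H \cap H_+$ and simultaneously $h/r \in U \cap V$, contradicting $U \cap V \cap \So_H \cap H_+ = \varnothing$. Hence $U \cap V \cap H_+ \subset \Bo_H$. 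The converse (iii)$\Rightarrow$(i) is immediate, since a net in $\So_H \cap H_+$ null with respect to both topologies would eventually lie in $U \cap V \cap H_+ \subset \Bo_H$, incompatible with having norm $1$.

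For (i)$\Leftrightarrow$(iv) under the additional hypothesis, complementarity of $\tau$ and $\pi$ (condition (i) of Theorem \ref{compl}) clearly implies the cone-restricted condition (i) here. Conversely, suppose $\{h_i\}_{i\in I} \subset \So_H$ is simultaneously $\tau$- and $\pi$-null; local solidness of both topologies ensures that $\{|h_i|\}_{i \in I}$ is also null in $\tau$ and $\pi$ (any solid $\tau$- or $\pi$-neighborhood of $0_H$ that eventually contains $h_i$ also contains $|h_i|$), and since $H$ is a normed lattice, $\|\,|h_i|\,\| = \|h_i\| = 1$, so $\{|h_i|\} \subset \So_H \cap H_+$ violates (i). The main (modest) obstacle is checking that the balancing step in (ii)$\Rightarrow$(iii) is compatible with the restriction to $H_+$, which works because the scalar $1/r$ is positive, and verifying that local solidness genuinely propagates nullity through $|\cdot|$; both are soft observations, so no serious difficulty is expected.
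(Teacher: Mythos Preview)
Your proof is correct and follows essentially the same approach as the paper: the equivalence of (i)--(iii) is handled exactly as in Theorem~\ref{compl}, and the equivalence with (iv) rests on the same key observation that the absolute value preserves membership in solid sets while preserving the norm in a normed lattice. The only cosmetic difference is that the paper phrases the step to (iv) at the neighborhood level (take $U$ and $V$ solid in (iii) and conclude $U\cap V\subset \Bo_H$), whereas you phrase it at the net level via condition (i); these are two sides of the same coin.
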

\begin{proof}
(iii)$\Rightarrow$(ii)$\Rightarrow$(i) are obvious, while (i)$\Rightarrow$(ii)$\Rightarrow$(iii) are proven similarly to the proof of Theorem \ref{compl}. (iv)$\Rightarrow$(iii) is trivial, while the converse follows from taking $U$ and $V$ to be solid.
\end{proof}

\begin{proposition}\label{pco2} Let $G$ be a closed subspace of a normed lattice $H$, and let $\tau$ be a linear topology on $H$, weaker than the norm topology. Then $\rho_{G}$ and $\tau$ satisfy the conditions (i)-(iii) of Proposition \ref{pco} if and only if $0_{H}$ is $\tau$-separated from $\So_{G}\cap H_{+}$.
\end{proposition}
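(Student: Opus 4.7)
The $(\Rightarrow)$ direction is immediate. If $\{f_i\}\subset \So_G\cap H_+$ is any $\tau$-null net, then $\rho_G(f_i)=0$ for every $i$ (as $f_i\in G$), so $\{f_i\}$ is trivially $\rho_G$-null. Being contained in $\So_H\cap H_+$, such a net would violate condition (i) of Proposition \ref{pco}; hence no such net exists, i.e.\ $0_H$ is $\tau$-separated from $\So_G\cap H_+$.

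For the converse, I adapt the argument $(\mathrm{i})\Rightarrow(\mathrm{iii})$ in the proof of Proposition \ref{compl2}. Assume $0_H$ is $\tau$-separated from $\So_G\cap H_+$ but condition (i) of Proposition \ref{pco} fails, so that there is a net $\{e_i\}\subset \So_H\cap H_+$ which is simultaneously $\tau$-null and $\rho_G$-null. For each $i$, pick $g_i\in G$ with $\|e_i-g_i\|\le 2\rho_G(e_i)$; then $\|e_i-g_i\|\to 0$ and $\|g_i\|\to 1$. The plan is to extract from the $g_i$'s a $\tau$-null positive unit-norm net inside $G$, which would contradict the separation hypothesis.

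The positivity of $e_i$ is transferred to the approximants via the standard lattice inequality $|e^+-g^+|\le |e-g|$: since $e_i=e_i^+$, this yields $\|e_i-g_i^+\|\le \|e_i-g_i\|\to 0$, whence $\|g_i^-\|=\|g_i^+-g_i\|\to 0$ and $\|g_i^+\|\to 1$. Applying Lemma \ref{sph} to $e_i$ and $g_i^+$, the normalizations $f_i:=g_i^+/\|g_i^+\|$ (well-defined for large $i$) form a positive unit-norm net with $\|e_i-f_i\|\to 0$, hence $\tau$-null. The principal obstacle --- and the delicate point of the proof --- is ensuring that $f_i$ actually belongs to $G$: this is automatic when $G$ is a sublattice of $H$, which is precisely the setting relevant to the subsequent analysis in Section \ref{lns} of closed spans of positive disjoint sequences, and it is the step where the extra structure on $G$ (or a suitable refinement of the approximation scheme, e.g.\ choosing $g_i$ inside the positive cone via a solid refinement of the $\rho_G$-neighborhood) must be brought to bear.
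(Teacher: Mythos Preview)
Your forward implication is correct and agrees with the paper. For the converse you follow exactly the route the paper sketches---adapt Proposition~\ref{compl2}, (i)$\Rightarrow$(iii), using the lattice contraction for positive parts---and you correctly isolate the obstruction: after approximating $e_i\in\So_H\cap H_+$ by $g_i\in G$ and passing to $g_i^{+}$, nothing guarantees $g_i^{+}\in G$. Your inequality $\lvert e_i-g_i^{+}\rvert\le\lvert e_i-g_i\rvert$ is the right one; the paper's hint records it with the inequality sign reversed, which is false in general (take $e=1$, $f=-1$ in $\R$).

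The gap you flag is not a technicality that a ``solid refinement'' will smooth over: as stated, the sufficiency direction fails for arbitrary closed subspaces. Take $H=C[0,1]$, $G=\{f:\int_0^1 f=0\}$, and $\tau$ the weak topology. Then $\So_G\cap H_+=\varnothing$, so $0_H$ is vacuously $\tau$-separated from it; yet any sequence of positive norm-one bumps $f_n$ with pairwise disjoint shrinking supports lies in $\So_H\cap H_+$, is weakly null, and satisfies $\rho_G(f_n)=\int_0^1 f_n\to 0$, so condition (i) of Proposition~\ref{pco} fails. Your observation that the argument goes through when $G$ is a sublattice is correct, and a hypothesis of that kind is what is actually needed here; without it neither your argument nor the paper's can be completed.
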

\begin{proof}
The necessity is easy to see, while the proof of sufficiency is similar to the proof of (i)$\Rightarrow$(iii) in Proposition \ref{compl2}, with the addition of observation that $\left|e-f\right|\le \left|e-f^{+}\right|$, in the case when $e\ge 0$.
\end{proof}

Let us call a subspace $E$ of $F$ \emph{positively dispersed} if it contains no positive almost disjoint sequences, or equivalently, there is no positive disjoint $\left\{f_{n}\right\}_{n\in\N}\subset \So_{F}$ such that $\lim\limits_{n\to\8} \rho_{E}\left(f_{n}\right)=0$. Accordingly, we will call $E$ \emph{strictly positively dispersed} if there is $\delta>0$ such that there is no positive disjoint $\left\{f_{n}\right\}_{n\in\N}\subset \So_{F}$ with $\rho_{E}\left(f_{n}\right)<\delta$.  We will call $E$ \emph{weakly/strongly positively dispersed} if $0_{F}$ is separated from $\So_{E}\cap F_{+}$ with respect to the un-/una-topology. Note that $\So_{E}\cap F_{+}$ can be empty, and so a lot of subspaces of $F$ are strongly positively dispersed just on the grounds of that. It is easy to see that strongly positively dispersed$\Rightarrow$strictly positively dispersed$\Rightarrow$positively dispersed$\Rightarrow$weakly positively dispersed.\medskip

We will say that $T\in\Lo\left(F,E\right)$ is \emph{lattice non-singular (LNS)} if there is no normalized positive disjoint $\rho_{T}$-null sequence. One can also introduce strictly/weakly/strongly LNS operators and establish the same relationships between them as in the DNS case. Obviously, every DNS operator is LNS, and one may wonder whether the converse is true.

\begin{question}
Is every LNS operator DSN?
\end{question}

In fact, from Remark \ref{dd}, this question is equivalent to whether every LNS operator on a discrete order continuous Banach lattice is USF.

\section{Acknowledgements}

The author wants to thank Vladimir Troitsky for bringing the author's attention to the topic, and also for contributing ideas to Proposition \ref{str} and Example \ref{rad}. The rest of the credit for Example \ref{rad} goes to Radomyra Shevchenko. The author also wants to thank Bill Johnson who contributed an idea for Theorem \ref{ref} and the service \href{mathoverflow.com/}{MathOverflow} which made it possible.

\begin{bibsection}
\begin{biblist}

\bib{ad}{article}{
   author={Abry, Mohammad},
   author={Dijkstra, Jan J.},
   title={On topological Kadec norms},
   journal={Math. Ann.},
   volume={332},
   date={2005},
   number={4},
   pages={759--765},
}

\bib{aiena}{book}{
   author={Aiena, Pietro},
   title={Fredholm and local spectral theory, with applications to
   multipliers},
   publisher={Kluwer Academic Publishers, Dordrecht},
   date={2004},
   pages={xiv+444},
}

\bib{ak}{book}{
   author={Albiac, Fernando},
   author={Kalton, Nigel J.},
   title={Topics in Banach space theory},
   series={Graduate Texts in Mathematics},
   volume={233},
   edition={2},
   note={With a foreword by Gilles Godefory},
   publisher={Springer, [Cham]},
   date={2016},
   pages={xx+508},
}

\bib{bn}{book}{
   author={Beckenstein, Edward},
   author={Narici, Lawrence},
   title={Topological vector spaces},
   series={Pure and Applied Mathematics (Boca Raton)},
   volume={296},
   edition={2},
   publisher={CRC Press, Boca Raton, FL},
   date={2011},
   pages={xviii+610},
}

\bib{dot}{article}{
   author={Deng, Y.},
   author={O'Brien, M.},
   author={Troitsky, V. G.},
   title={Unbounded norm convergence in Banach lattices},
   journal={Positivity},
   volume={21},
   date={2017},
   number={3},
   pages={963--974},
}

\bib{engelking}{book}{
   author={Engelking, Ryszard},
   title={General topology},
   series={Sigma Series in Pure Mathematics, 6},
   publisher={Heldermann Verlag},
   place={Berlin},
   date={1989},
   pages={viii+529},
}

\bib{flt}{article}{
   author={Flores, J.},
   author={L\'{o}pez-Abad, J.},
   author={Tradacete, P.},
   title={Banach lattice versions of strict singularity},
   journal={J. Funct. Anal.},
   volume={270},
   date={2016},
   number={7},
   pages={2715--2731},
}

\bib{gm}{book}{
   author={Gonz\'{a}lez, Manuel},
   author={Mart\'{\i}nez-Abej\'{o}n, Antonio},
   title={Tauberian operators},
   series={Operator Theory: Advances and Applications},
   volume={194},
   publisher={Birkh\"{a}user Verlag, Basel},
   date={2010},
   pages={xii+245},
}

\bib{gmm}{article}{
   author={Gonz\'{a}lez, Manuel},
   author={Mart\'{\i}nez-Abej\'{o}n, Antonio},
   author={Martin\'{o}n, Antonio},
   title={Dijointly non-singular operators on Banach lattices},
   journal={J. Funct. Anal.},
   volume={280},
   date={2021},
   number={8},
   pages={108944},
}

\bib{groth}{book}{
   author={Grothendieck, A.},
   title={Topological vector spaces},
   note={Translated by Orlando Chaljub;
   Notes on Mathematics and its Applications},
   publisher={Gordon and Breach Science Publishers, New York-London-Paris},
   date={1973},
   pages={x+245},
}

\bib{kkl}{book}{
   author={K\polhk akol, Jerzy},
   author={Kubi\'{s}, Wies\l aw},
   author={L\'{o}pez-Pellicer, Manuel},
   title={Descriptive topology in selected topics of functional analysis},
   series={Developments in Mathematics},
   volume={24},
   publisher={Springer, New York},
   date={2011},
   pages={xii+493},
}

\bib{klt}{article}{
   author={Kandi\'{c}, M.},
   author={Li, H.},
   author={Troitsky, V. G.},
   title={Unbounded norm topology beyond normed lattices},
   journal={Positivity},
   volume={22},
   date={2018},
   number={3},
   pages={745--760},
}

\bib{kmt}{article}{
   author={Kandi\'{c}, M.},
   author={Marabeh, M. A. A.},
   author={Troitsky, V. G.},
   title={Unbounded norm topology in Banach lattices},
   journal={J. Math. Anal. Appl.},
   volume={451},
   date={2017},
   number={1},
   pages={259--279},
}

\bib{kato}{book}{
   author={Kato, Tosio},
   title={Perturbation theory for linear operators},
   series={Classics in Mathematics},
   note={Reprint of the 1980 edition},
   publisher={Springer-Verlag, Berlin},
   date={1995},
   pages={xxii+619},
}

\bib{mn}{book}{
   author={Meyer-Nieberg, Peter},
   title={Banach lattices},
   series={Universitext},
   publisher={Springer-Verlag, Berlin},
   date={1991},
   pages={xvi+395},
}

\bib{ostrov}{article}{
   author={Ostrovski\u{\i}, M. I.},
   title={Topologies on the set of all subspaces of a Banach space and
   related questions of Banach space geometry},
   journal={Quaestiones Math.},
   volume={17},
   date={1994},
   number={3},
   pages={259--319},
}

\bib{sing1}{article}{
   author={Singer, I.},
   title={Basic sequences and reflexivity of Banach spaces},
   journal={Studia Math.},
   volume={21},
   date={1961/62},
   pages={351--369},
}

\bib{sing2}{book}{
   author={Singer, Ivan},
   title={Bases in Banach spaces. II},
   publisher={Editura Academiei Republicii Socialiste Rom\^{a}nia, Bucharest;
   Springer-Verlag, Berlin-New York},
   date={1981},
   pages={viii+880},
}	

\bib{yam}{book}{
   author={Yamamuro, Sadayuki},
   title={Differential calculus in topological linear spaces},
   series={Lecture Notes in Mathematics, Vol. 374},
   publisher={Springer-Verlag, Berlin-New York.},
   date={1974},
   pages={iv+179},
}
\end{biblist}
\end{bibsection}
\end{document}